\newcounter{citedtheorems}
\newtheorem{defn}{Definition}[section]
\newtheorem{theorem}[defn]{Theorem}
\newtheorem*{theorem-n}{Main Theorem}
\newtheorem*{thm-n}{Theorem}
\newtheorem*{claim-n}{Claim}
\newtheorem{thm-lit}[citedtheorems]{Theorem}
\newtheorem{defn-lit}[citedtheorems]{Definition}
\newtheorem{fact}[defn]{Fact}
\newtheorem{cor}[defn]{Corollary}
\newtheorem{concl}[defn]{Conclusion}
\newtheorem{conv}[defn]{Convention}
\newtheorem{claim}[defn]{Claim}
\newtheorem{obs}[defn]{Observation}
\newtheorem{rmk}[defn]{Remark}
\newtheorem{disc}[defn]{Discussion}
\newtheorem{qst}[defn]{Question}
\newcommand{\lost}{\L os' }
\newcommand{\los}{\L os }
\newcommand{\br}{\vspace{2mm}}
\newcommand{\nth}{\noindent\textbf}
\newcommand{\kleq}{\trianglelefteq}
\newcommand{\ml}{\mathcal{L}}
\newcommand{\tlf}{\trianglelefteq}
\newcommand{\step}{\vspace{3mm}\noindent\emph}   
\newcommand{\lp}{\emph{(}}
\newcommand{\rp}{\emph{)}}
\newcommand{\de}{\mathcal{D}}
\newcommand{\ee}{\mathcal{E}}
\newcommand{\eff}{\mathcal{F}}
\newcommand{\fss}{{\mathcal{P}}_{\aleph_0}}
\newcommand{\trv}{\textbf{t}} 
\newcommand{\uu}{\mathcal{U}}
\newcommand{\mc}{\mathcal{C}}
\newcommand{\vp}{\varphi}
\newcommand{\lcf}{\operatorname{lcf}}
\newcommand{\cf}{\operatorname{cf}}
\newcommand{\bx}{\mathbf{x}}
\newcommand{\mx}{\mathbf{x}}
\newcommand{\bh}{\mathbf{h}}
\newcommand{\oni}{\operatorname{iff}}
\title[Constructing regular ultrafilters...]{Constructing regular ultrafilters \\ from a model-theoretic point of view}
\author{M. Malliaris and S. Shelah}\thanks{\emph{Thanks}: 
Malliaris was partially supported by NSF grant DMS-1001666, Shelah's grants DMS-0600940 and 1101597, 
and by a G\"odel fellowship. Shelah was partially supported by the Israel Science Foundation grant 710/07. 
\\ This is paper 996 in Shelah's list of publications.
The authors thank Simon Thomas for very helpful organizational remarks.}
\address{Department of Mathematics, University of Chicago, 5734 S. University Avenue, Chicago, IL 60637, USA and
Einstein Institute of Mathematics, Edmond J. Safra Campus, Givat Ram, The Hebrew
University of Jerusalem, Jerusalem, 91904, Israel}
\email{mem@math.uchicago.edu}
\address{Einstein Institute of Mathematics, Edmond J. Safra Campus, Givat Ram, The Hebrew
University of Jerusalem, Jerusalem, 91904, Israel, and Department of Mathematics,
Hill Center - Busch Campus, Rutgers, The State University of New Jersey, 110
Frelinghuysen Road, Piscataway, NJ 08854-8019 USA}
\email{shelah@math.huji.ac.il}
\urladdr{http://shelah.logic.at}
\begin{document}

\begin{abstract}
This paper contributes to the set-theoretic side of understanding Keisler's order.
We consider properties of ultrafilters which affect saturation of unstable theories:
the lower cofinality $\lcf(\aleph_0, \de)$ of $\aleph_0$ modulo $\de$,  
saturation of the minimum unstable theory (the random graph), flexibility, goodness, goodness for equality, and realization of symmetric cuts. 
We work in ZFC except when noted, as several constructions appeal to complete ultrafilters thus assume a measurable cardinal.
The main results are as follows. 
First, we investigate the strength of flexibility, known to be detected by non-low theories.
Assuming $\kappa > \aleph_0$ is measurable, we construct a regular ultrafilter on $\lambda \geq 2^\kappa$ which is flexible (thus: ok) but not good, and which moreover has large $\lcf(\aleph_0)$ but does not even saturate models of the random graph. 
This implies (a) that flexibility alone
cannot characterize saturation of any theory, however (b) by separating flexibility from goodness, 
we remove a main obstacle to proving non-low does not imply maximal, and (c) from a set-theoretic point of view, consistently, 
ok need not imply good, addressing a problem from Dow 1985.
Under no additional assumptions, we prove that there is a loss of saturation in regular ultrapowers of unstable theories, 
and give a new proof that there is a loss of saturation in ultrapowers of non-simple theories. 
More precisely, for $\de$ regular on $\kappa$ and $M$ a model of an unstable theory, 
$M^\kappa/\de$ is not $(2^\kappa)^+$-saturated;
and for $M$ a model of a non-simple theory and $\lambda = \lambda^{<\lambda}$,
$M^\lambda/\de$ is not $\lambda^{++}$-saturated.
Finally, we investigate realization and omission of symmetric cuts, significant both because of the maximality of the
strict order property in Keisler's order, and by recent work of the authors on $SOP_2$.
We prove that if $\de$ is a $\kappa$-complete ultrafilter on $\kappa$, any ultrapower of a sufficiently saturated model of linear order
will have no $(\kappa, \kappa)$-cuts, and that if $\de$ is also normal, it will have a $(\kappa^+, \kappa^+)$-cut. We apply this to prove that for any $n < \omega$, assuming the existence of $n$ measurable cardinals below $\lambda$, there is a regular ultrafilter $D$ on $\lambda$ such that any $D$-ultrapower of a model of linear order will have $n$ alternations of cuts, as defined below. Moreover, 
$D$ will $\lambda^+$-saturate all stable theories but will not $(2^{\kappa})^+$-saturate any unstable theory, where $\kappa$ is the smallest
measurable cardinal used in the construction.
\end{abstract}

\maketitle

\section*{Introduction}

The motivation for our work is a longstanding, and far-reaching, problem in model theory: namely, 
determining the structure of Keisler's order on countable first-order theories. Introduced by Keisler in 1967,
this order suggests a way of comparing the complexity of first-order theories in terms of the difficulty 
of producing saturated regular ultrapowers. Much of the power of this order comes from the interplay of 
model-theoretic structure and set-theoretic constraints.   
However, this interplay also contributes to its difficulty: progress requires advances in model-theoretic
analysis on the one hand, and advances in ultrapower construction on the other. Our work in this paper 
is of the second kind and is primarily combinatorial set theory, though the model-theoretic point of view is fundamental.  

As might be expected from a problem of this scope, surprising early results were followed by 
many years of little progress.
Results of Shelah in \cite{Sh:a}, Chapter VI (1978) had settled Keisler's order for stable theories, as 
described in \S \ref{s:ex2} below. Apart from this work, and the result on maximality of $SOP_3$ in \cite{Sh500},
the problem of understanding Keisler's order on unstable theories was dormant for many years and seemed difficult.

Very recently, work of Malliaris and Shelah has led to considerable advances in the understanding of how ultrafilters and theories interact (Malliaris \cite{mm-thesis}-\cite{mm5}, Malliaris and Shelah \cite{mm-sh3}-\cite{treetops}). In particular, we now have much more information about
properties of ultrafilters which have model-theoretic significance. However, the model-theoretic analysis gave little information about the
relative strength of the ultrafilter properties described. In the current paper, we substantially clarify the picture. 
We establish various implications and non-implications between model-theoretic properties of ultrafilters, and 
we develop a series of tools and constraints which help the general problem of constructing ultrafilters with a precise degree of saturation.
Though we have framed this as a model-theoretically motivated project, it naturally relates to questions in combinatorial set theory,
and our results answer some questions there. 
Moreover, an interesting and unexpected phenomenon in this paper is the relevance of measurable cardinals in the construction of regular ultrafilters, see \ref{disc:measurable} below.

This paper begins with several introductory sections which frame our investigations and collect the implications of the current work.
We give two extended examples in \S \ref{s:examples}, the first historical, the second involving 
results from the current paper. Following this, we give definitions and fix notation in \S \ref{s:background}. 
\S \ref{s:results} gives an overview of our main results in this paper.
\S \ref{s:cont-imp} includes context for, and implications of, our constructions. Sections \S \ref{s:trg}-\S \ref{s:finite-alt} contain the main proofs. 

In this paper, we focus on product constructions and cardinality constraints.
In a related paper in preparation \cite{mm-sh-v2} we will focus on constructions via families of independent functions.

\setcounter{tocdepth}{1}

\tableofcontents \label{toc}

\newpage

\section{Background and examples} \label{s:examples}.

In this section we give two extended examples. The first is historical; we motivate the problem of Keisler's order, i.e. of classifying first-order theories in terms of saturation of ultrapowers, by explaining the classification for the stable case. 
The second involves a proof from the current paper: we motivate the idea that model-theoretic properties 
can give a useful way of calibrating the ``strength'' of ultrafilters by applying saturation arguments to prove 
that consistently flexible (=OK) does not imply good.

Some definitions will be given informally; formal versions can be found in \S \ref{s:background} below. 

\subsection{Infinite and pseudofinite sets: Theories through the lens of ultrafilters.} \label{s:ex1}
This first example is meant to communicate some intuition for the kinds of model-theoretic
``complexity'' to which saturation of ultrapowers is sensitive. 

First, recall that questions of saturation and expressive power already arise in the two fundamental theorems of ultrapowers.

\begin{thm-lit} \emph{(\lost theorem for first-order logic)} \label{t-los}
Let $\de$ be an ultrafilter on $\lambda \geq \aleph_0$, $M$ an $\ml$-structure, $\vp(\overline{x})$ an $\ml$-formula, and $\overline{a} \subseteq N = M^\lambda/\de$, $\ell({\overline{a}}) = \ell(\overline{x})$. Fixing a canonical representative of each $\de$-equivalence class, write $\overline{a}[i]$ for the value of $\overline{a}$ at index $i$.
Then 
\[ N \models \vp(\overline{a}) \iff \{ i \in \lambda : M \models \vp(\overline{a}[i]) \} \in \de \]
\end{thm-lit}

\begin{thm-lit} \emph{(Ultrapowers commute with reducts)} \label{commute-with-reducts}
Let $M$ be an $\ml^\prime$-structure, $\ml \subseteq \ml^\prime$, $\de$ an ultrafilter on $\lambda \geq \aleph_0$, $N = M^\lambda/\de$.
Then 
\[ \left( M^\lambda/\de      \right)|_{\ml}  = \left(  M|_{\ml} \right)^\lambda/\de      \]
\end{thm-lit}

That is: By itself, Theorem \ref{t-los} may appear only to guarantee that $M \equiv M^\lambda/\de$. 
Yet combined with Theorem \ref{commute-with-reducts},
it has consequences for saturation of ultrapowers, as we now explain. 

Consider the following three countable models in the language $\ml = \{ E, = \}$, for $E$ a binary relation symbol, 
interpreted as an equivalence relation.

\begin{itemize}
\item In $M_1$, $E$ is an equivalence relation with two countable classes.
\item In $M_2$, $E$ is an equivalence relation with countably many countable classes.
\item In $M_3$, $E$ is an equivalence relation with exactly one class of size $n$ for each $n \in \mathbb{N}$.
\end{itemize}

What variations are possible in ultrapowers of these models? That is, for $N_i = M_i^\lambda/\de$, what can we say about:
(a) the number of $E^{N_i}$-classes, (b) the possible sizes of $E^{N_i}$-classes, (b)$^\prime$ if two $E^{N_i}$-classes can have unequal sizes? 

\begin{obs}
For any index set $I$ and ultrafilter $\de$ on $I$,
\begin{enumerate}
\item $N_1 = (M_1)^I/\de$ will have two $E$-classes each of size $|N_1|$ 
\item $N_2 = (M_2)^I/\de$ will have $|N_2|$ $E$-classes each of size $|N_2|$ 
\end{enumerate}
\end{obs}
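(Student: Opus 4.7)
The plan is to apply \Los theorem in combination with a direct cardinality computation inside the ultrapower. For both parts, the number of classes is handled by Łoś, and then the size of a single class will be recovered by noticing that it looks like an ultraproduct (of countably infinite sets) in its own right.

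First, for part (1), I would begin by observing that ``there are exactly two $E$-classes'' is a single first-order sentence in $\ml$, so by Theorem \ref{t-los} this transfers from $M_1$ to $N_1$. Next, to compute the size of each class, fix a representative $a \in A_1$ of one of the two classes of $M_1$ (regarding $a$ as a constant element of $N_1$ via the diagonal embedding). By \Los theorem, an element $[b]_\de \in N_1$ satisfies $E(b,a)$ iff $\{i \in I : b[i] \in A_1\} \in \de$. Thus the class of $a$ in $N_1$ is in bijective correspondence with $A_1^I/\de$. Since $|A_1| = |A_2| = \aleph_0$, any bijection $A_1 \to A_2$ induces a bijection between the corresponding ultrapowers, so both classes have the same size $\kappa$; and since $N_1$ is the disjoint union of these two classes, $2\kappa = |N_1|$, forcing $\kappa = |N_1|$.

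For part (2), I would handle counting the classes first. Since for each $n < \omega$ the sentence ``there are at least $n$ equivalence classes'' is first-order, Łoś gives that $N_2$ has at least $\aleph_0$ classes. To get $|N_2|$ classes, pick a representative $c_n$ from each class $C_n$ of $M_2$, and for each $g : I \to \omega$ define $f_g(i) := c_{g(i)}$; distinct $g, g'$ that differ on a $\de$-positive set produce $[f_g]_\de$ and $[f_{g'}]_\de$ lying in distinct $E$-classes of $N_2$. Hence the number of classes is at least $|\omega^I/\de|$, which equals $|M_2^I/\de| = |N_2|$ since $|\omega| = |M_2|$. For the size of a single class, fix $[a]_\de \in N_2$ and let $C_i \subseteq M_2$ be the $E$-class of $a[i]$; by \Los theorem its class in $N_2$ is in bijection with $\prod_i C_i/\de$, and each $C_i$ being countably infinite gives this the same cardinality as $\omega^I/\de = |N_2|$.

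In both parts the step that takes a little care is the passage from ``class of $[a]_\de$ in $N_i$'' to ``ultraproduct of the corresponding classes $C_i \subseteq M_i$''; this is exactly where Theorems \ref{t-los} and \ref{commute-with-reducts} are used in tandem (the latter licensing us to ignore the rest of the structure and view the class as an object in the language of equality, possibly with a constant). The rest is just cardinal arithmetic using the fact that every class of $M_1, M_2$ is countably infinite and that $|N_i|$ is infinite. I do not expect a genuine obstacle — the content of the observation is that once Łoś pins down the number of classes, the sizes are determined combinatorially by how countable index sets behave in an ultrapower.
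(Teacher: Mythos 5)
Your argument is correct, but it takes a genuinely different route from the paper's. The paper proves both parts by expanding the language: for $(1)$ it adds a function symbol $f$ interpreted in $M_1$ as a bijection between the two classes, and for $(2)$ it adds bijections $f_1, f_2$ and a parametrized family $f_3$ giving uniform bijections between $M_2$, a set of class representatives, and the individual classes. By \lost theorem these remain bijections in the ultrapower, and by Theorem \ref{commute-with-reducts} forgetting them before or after taking the ultrapower gives the same $\ml$-structure; the cardinality claims then fall out of the existence of these \emph{induced} (internal) bijections with no cardinal arithmetic at all. You instead compute externally: you identify the class of $[a]_\de$ with the ultraproduct $\prod_i C_i/\de$ of the classes $C_i \ni a[i]$, and use an external bijection of each countably infinite $C_i$ with $\omega$ to conclude $|\prod_i C_i/\de| = |\omega^I/\de| = |N_i|$; likewise your counting of classes in $(2)$ via the functions $f_g(i) = c_{g(i)}$ is an external enumeration rather than an appeal to the induced $f_1$. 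Both proofs are sound. The paper's version is tailored to the moral of the section — it is precisely the unavailability of such definable expansions for $M_3$ that lets $\mu(\de)$ enter — while your version is more self-contained and, as a bonus, the identification of a class with $\prod_i C_i/\de$ is exactly the computation that, applied to $M_3$ with its finite classes, produces a class of size $\mu(\de)$. One small stylistic point: your invocation of Theorem \ref{commute-with-reducts} at the end is not actually load-bearing in your argument (the bijection $\prod_i C_i/\de \to \omega^I/\de$ is set-theoretic, not an $\ml$-reduct of anything), so you could drop it without loss.
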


\begin{proof}
(1) Two classes follows by \lost theorem, so we prove the fact about size. 
By Theorem \ref{commute-with-reducts} $(M_{11}^\lambda/\de)|_\ml = M_1^\lambda/\de$, where $M_{11}$ is the expansion of $M_1$ 
to $\ml^\prime = \ml \cup \{ f \}$ and $f$ is interpreted as a bijection between the equivalence classes. By \lost theorem,
$f$ will remain a bijection in $N_1$, but Theorem \ref{commute-with-reducts} means that whether we forget the existence of $f$
before or after taking the ultrapower, the result is the same.

(2) Similarly, $M_2$ admits an expansion
to a language with a bijection $f_1$ between $M_2$ and a set of representatives of $E$-classes; a bijection $f_2$ between $M_2$ and a fixed
$E$-class; and a parametrized family $f_3(x,y,z)$ where for each $a,b$, $f_3(x,a,b)$ is a bijection between the equivalence class of $b$
and that of $b$. So once more, by Theorems \ref{t-los} and \ref{commute-with-reducts}, the ultrapowers of $M_2$ are in a sense one-dimensional: 
if $N_2 = M_2^\lambda/\de$ is an ultrapower, it will be an equivalence relation with $|N_2|$ classes each of which has size $|N_2|$.
\end{proof}

Now for $M_3$, the situation is a priori less clear. Any nonprincipal ultrapower will contain infinite (pseudofinite) sets
by \lost theorem, but it is a priori not obvious whether induced bijections between these sets exist.
It is easy to choose infinitely many distinct pseudofinite sets 
(let the $n$th set project a.e. to a class whose size is a power of the $n$th prime) 
which do not clearly admit bijections to each other in the index model $M$, nor to $M$ itself. 

We have reached the frontier of what Theorem \ref{commute-with-reducts} can control, and a property of ultrafilters comes to the surface:

\begin{defn} \label{mu-defn} \emph{(\cite{Sh:c} Definition III.3.5)}
Let $\de$ be an ultrafilter on $\lambda$.
\[  \mu(\de) :=  \operatorname{min} \left\{ \rule{0pt}{15pt}
\prod_{t<\lambda}~ n_t /\de  ~: ~ n_t < \aleph_0, ~\prod_{t<\lambda}
~n_t/\de \geq \aleph_0 \right\} \]
be the minimum value of the product of an unbounded sequence of cardinals
modulo $\de$.
\end{defn}

\begin{obs}
Let $\de$ be an ultrafilter on $\lambda$, let $M_3$ be the model defined above, and $N_3 = (M_3)^\lambda/\de$. Then:
\begin{enumerate}
\item $N_3$ will have $|N_3|$ $E$-classes.
\item $E^{N_3}$ will contain only classes of size $\geq \mu(\de)$, and will contain at least one class of size $\mu(\de)$.
\end{enumerate}
\end{obs}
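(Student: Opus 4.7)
The plan for (1) parallels the arguments for $M_1$ and $M_2$. Expand $M_3$ to a richer language $\ml' \supseteq \ml$ by adding a class-representative function $r$ (so $x E r(x)$ and $r(r(x)) = r(x)$) and a bijection $\beta : M_3 \to \operatorname{range}(r)$. Such a $\beta$ exists because $M_3$ and its set of $E$-classes are both countably infinite. The statements ``$\beta$ is a bijection onto $\operatorname{range}(r)$'' and ``every element is $E$-equivalent to a unique element of $\operatorname{range}(r)$'' are first-order, so they persist in $N_3$ by \lost theorem. Theorem \ref{commute-with-reducts} then gives that the number of $E^{N_3}$-classes equals $|\operatorname{range}(r^{N_3})| = |N_3|$.

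For (2), further expand $M_3$ by adjoining $(\mathbb{N}, <)$ as a second sort, a class-size function $s : M_3 \to \mathbb{N}$, and an enumeration $e(x,k)$ so that for $k < s(x)$ the map $k \mapsto e(x,k)$ is a bijection from $\{0, 1, \dots, s(x) - 1\}$ onto the $E$-class of $x$. Fix $a \in N_3$. By \lost theorem, $e^{N_3}(a, \cdot)$ is a bijection between the initial segment $\{k \in \mathbb{N}^\lambda/\de : k < s^{N_3}(a)\}$ and the $E$-class of $a$ in $N_3$; hence the external size of the class of $a$ equals the external cardinality of that initial segment. If $s^{N_3}(a)$ is a standard finite integer $n$, the class has size $n$; otherwise $s^{N_3}(a)$ is represented by an unbounded sequence modulo $\de$, the initial segment coincides (canonically) with $\prod_i s(a[i])/\de$, and its external cardinality is at least $\mu(\de)$ by Definition \ref{mu-defn}. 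At least one infinite class exists: for instance pick $a$ with $a[i]$ in an $E$-class of size $i$ in $M_3$.

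To realize a class of size exactly $\mu(\de)$, choose a sequence $(n_i : i < \lambda)$ of positive finite integers witnessing the minimum in Definition \ref{mu-defn}, i.e.\ with $\prod_i n_i/\de = \mu(\de)$. Since $M_3$ has an $E$-class of every positive finite size, pick $a \in N_3$ with $a[i]$ in a class of size $n_i$; then the $E$-class of $a$ in $N_3$ has external size exactly $\prod_i n_i/\de = \mu(\de)$. The only conceptual subtlety is that, although \lost theorem gives perfect control over the \emph{internal} size of an $E$-class (via the predicate $s$), the external cardinality of an infinite pseudofinite set is genuinely extra data --- which is precisely what $\mu(\de)$ is designed to measure; once the enumeration function $e$ is in place, the argument reduces term by term to Definition \ref{mu-defn}.
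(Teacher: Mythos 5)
Your proof is correct and takes essentially the same route as the paper's (two-line) proof: expand the language so that \Los theorem and the reduct theorem control the number of classes and give an internal enumeration of each class, then obtain the class of size exactly $\mu(\de)$ by taking an element whose $t$th projection lies in a class of size $n_t$ for a sequence witnessing the minimum in Definition \ref{mu-defn}. Two cosmetic remarks: the finite classes of $M_3$ persist in $N_3$, so ``only classes of size $\geq \mu(\de)$'' must be read as applying to the infinite classes (your case split on whether $s^{N_3}(a)$ is standard makes this explicit), and ``pick $a[i]$ in a class of size $i$'' only typechecks for $\lambda = \aleph_0$ --- but your final paragraph already supplies the correct construction for general $\lambda$.
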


\begin{proof}
(1) As for the number of classes, Theorem \ref{commute-with-reducts} still applies.

(2) Choose a sequence of cardinals $n_t$ witnessing $\mu(\de)$, and consider the class whose projection to 
the $t$th index model has cardinality $n_t$.
\end{proof}

Definition \ref{mu-defn} isolates a well-defined set-theoretic property of ultrafilters, and indeed, an early theorem of the second author
proved that one could vary the size of $\mu(\de)$:

\begin{thm-lit} \emph{(Shelah, \cite{Sh:c}.VI.3.12)} \label{mu-theorem}
Let $\mu(\de)$ be as in Definition \ref{mu-defn}. Then for any infinite
$\lambda$ and
$\nu = \nu^{\aleph_0} \leq 2^{\lambda}$ there exists a regular ultrafilter
$\de$ on $\lambda$ with $\mu(\de) = \nu$.
\end{thm-lit}

Whereas the saturation of $(M_1)^\lambda/\de$ and of $(M_2)^\lambda/\de$ will not depend on $\mu(\de)$,
$N_3 = (M_3)^\lambda/\de$ will omit a type of size $\leq\kappa$ of the form 
$\{ E(x,a) \} \cup \{ \neg x=a^\prime : N_3 \models E(a^\prime, a) \}$
if and only if $\mu(\de) \leq \kappa$. 

Restricting to regular ultrafilters, so that saturation of the ultrapower does not depend on saturation of the index model
but only on its theory, the same holds if we replace each $M_i$ by some elementarily equivalent model, and is thus a 
statement about their respective theories.

This separation of theories by means of their sensitivity to $\mu(\de)$ is, in fact, characteristic within stability.
Recall that a formula $\vp(x;y)$ has the finite cover property with respect to a theory $T$ 
if for all $n < \omega$, there are $a_0, \dots a_n$ in some model $M \models T$ such that the set
$\Sigma_n = \{ \vp(x;a_0), \dots \vp(x;a_n) \}$ is inconsistent but every $n$-element subset of $\Sigma_n$ is consistent.

\begin{thm-lit} \emph{(Shelah \cite{Sh:c} VI.5)} \label{two}
Let $T$ be a countable stable theory, $M \models T$, and $\de$ a regular ultrafilter on $\lambda \geq \aleph_0$. Then:
\begin{enumerate}
\item If $T$ does not have the finite cover property, then $M^\lambda/\de$ is always $\lambda^+$-saturated.
\item If $T$ has the finite cover property, then $M^\lambda/\de$ is $\lambda^+$-saturated if and only if $\mu(\de) \geq \lambda^+$.
\end{enumerate}
Thus Keisler's order on stable theories has exactly two classes, linearly ordered.
\end{thm-lit}

\begin{proof} (Sketch) This relies on a characterization of saturated models of stable theories: $N$ is $\lambda^+$-saturated if and only if
it is $\kappa(T)$-saturated and every maximal indiscernible set has size $\geq \lambda^+$. [This relies heavily on uniqueness of nonforking extensions:  given a type $p$ one hopes to realize over some $A$, $|A| \leq \lambda$, restrict $p$ to a small set over which it does not fork, and
use $\kappa(T)$-saturation to find a countable indiscernible sequence of realizations of the restricted type. By hypothesis, we may assume this 
indiscernible sequence extends to one of size $\lambda^+$, and by uniqueness of nonforking extensions, any element of this sequence which does
not fork with $A$ will realize the type.]

Returning to ultrapowers: for countable theories, $\kappa(T) \leq \aleph_1$ and any nonprincipal ultrapower is $\aleph_1$-saturated. So it suffices to show that any maximal indiscernible set is large, and the theorem proves, by a coding argument, that this is true whenever the size of every pseudofinite set is large.
\end{proof}

\step{Discussion}.
As mentioned above, in this paper we construct ultrafilters with ``model-theoretically significant properties.''
The intent of this example was to motivate our work by showing what ``model-theoretically significant'' might mean. 
However, the example also illustrates what kinds of properties may fit the bill. We make two general remarks.

\begin{enumerate}
\item ``Only formulas matter'': The fact that $\mu(\de)$ was detected by a property of a single formula, the finite cover property, is not an accident. 
For $\de$ a regular ultrafilter and $M \models T$ any countable theory, $M^\lambda/\de$ is $\lambda^+$-saturated if and only 
if it is $\lambda^+$-saturated for $\vp$-types, for all formulas $\vp$, Malliaris \cite{mm1} Theorem 12. Thus, from the point of view
of Keisler's order, it suffices to understand properties of regular ultrafilters which are detected by formulas. 

\item The role of pseudofinite structure is fundamental, reflecting the nature of the objects involved (regular ultrapowers, first-order theories).
On one hand, pseudofinite phenomena can often be captured by a first-order theory.
On the other, saturation of regular ultrapowers depends on finitely many conditions in each index model, since 
by definition regular ultrafilters $\de$ on $I$, $|I| = \lambda$ contain regularizing families, i.e. $\{ X_i : i < \lambda \}$
such that for each $t \in I$, $|\{ i < \lambda : t \in X_i \} | < \aleph_0$.
\end{enumerate}

\subsection{Flexibility without goodness: Ultrafilters through the lens of theories.} \label{s:ex2}

Our second example takes the complementary point of view. 
The following is a rich and important class of ultrafilters introduced by Keisler:

\begin{defn} \emph{(Good ultrafilters, Keisler \cite{keisler-1})} 
\label{good-filters}
The filter $\de$ on $I$ is said to be \emph{$\mu^+$-good} if every $f: \fss(\mu) \rightarrow \de$ has
a multiplicative refinement, where this means that for some $f^\prime : \fss(\mu) \rightarrow \de$,
$u \in \fss(\mu) \implies f^\prime(u) \subseteq f(u)$, and $u,v \in \fss(\mu) \implies
f^\prime(u) \cap f^\prime(v) = f^\prime(u \cup v)$.

Note that we may assume the functions $f$ are monotonic.

$\de$ is said to be \emph{good} if it is $|I|^+$-good.
\end{defn}

It is natural to ask for meaningful weakenings of this notion, e.g. by requiring only that certain classes of functions have multiplicative refinements. 
An important example is the notion of $OK$, which appeared without a name in Keisler \cite{keisler-1}, was named and studied by Kunen \cite{kunen-1} and investigated generally by Dow \cite{dow} and by Baker and Kunen \cite{b-k}. 
We follow the definition from \cite{dow} 1.1.

\begin{defn} \emph{(OK ultrafilters)} \label{d:ok}
The filter $\de$ on $I$ is said to be $\lambda$-OK if each monotone function $g: \fss(\lambda) \rightarrow \de$ with $g(u) = g(v)$ whenever
$|u| = |v|$ has a multiplicative refinement $f: \fss(\lambda) \rightarrow \de$.
\end{defn}

It is immediate that $\lambda^+$-good implies $\lambda$-OK.
Though OK is an a priori weaker notion, the relative strength of OK and good was not clear.  
For instance, in \cite{dow} 3.10 and 4.7, Dow raises the problem of constructing ultrafilters which are $\lambda^+$-OK but not
$\lambda^+$-good; to our knowledge, even the question of constructing $\lambda$-OK not $\lambda^+$-good ultrafilters on $\lambda$ was open.
Before discussing how a model-theoretic perspective can help with such questions, we define the main objects of interest in this paper:

\begin{defn} \emph{(Regular filters)} \label{regular}
A filter $\de$ on an index set $I$ of cardinality $\lambda$ is said to be \emph{$\lambda$-regular}, or simply 
\emph{regular}, if there exists a $\lambda$-regularizing family $\langle X_i : i<\lambda \rangle$, which means that:
\begin{itemize}
 \item for each $i<\lambda$, $X_i \in \de$, and
 \item for any infinite $\sigma \subset \lambda$, we have $\bigcap_{i \in\sigma} X_i = \emptyset$
\end{itemize}
Equivalently, for any element $t \in I$, $t$ belongs to only finitely many of the sets $X_i$. 
\end{defn}

Now we make a translation. As Keisler observed, good regular ultrafilters can be characterized as those regular ultrafilters 
able to saturate any countable theory. 
(By ``$\de$ saturates $T$'' we will always mean:
$\de$ is a regular ultrafilter on the infinite index set $I$, $T$ is a countable complete first-order theory and for any $M \models T$,
we have that $M^I/\de$ is $\lambda^+$-saturated, where $\lambda = |I|$.) 
We state this as a definition and an observation, which together say simply that
the distance between \emph{consistency} of a type (i.e. finite consistency, reflected by \lost theorem) 
and \emph{realization} of a type in a regular ultrapower
can be explained by whether or not certain monotonic functions have multiplicative refinements.

\begin{defn} \label{dist}
Let $T$ be a countable complete first-order theory, $M \models T$, $\de$ a regular ultrafilter on $I$, $|I| = \lambda$, $N = M^\lambda/\de$.
Let $p(x) = \{ \vp_i(x;a_i) : i < \lambda \}$ be a consistent partial type in the ultrapower $N$. Then a \emph{distribution} 
of $p$ is a map $d : \fss(\lambda) \rightarrow \de$ which satisfies:
\begin{enumerate}
\item For each $\sigma \in [\lambda]^{<\aleph_0}$, 
$d(\sigma) \subseteq \{ t \in I : M \models \exists x \bigwedge \{ \vp_i(x;a_i[t]) : i \in \sigma \} \}$. Informally speaking,
$d$ refines the \los map. 
\item $d$ is monotonic, meaning that $\sigma, \tau \in [\lambda]^{<\aleph_0}$, $\sigma \subseteq \tau$ implies
$d(\sigma) \supseteq d(\tau)$
\item The set $\{ d(\sigma) : \sigma \in [\lambda]^{<\aleph_0} \}$ is a regularizing family, i.e. each $t \in I$ belongs to only finitely many
elements of this set.
\end{enumerate}
\end{defn}

\begin{obs}
Let $T$ be a countable complete first-order theory, $M \models T$, $\de$ a regular ultrafilter on $\lambda$, $N = M^\lambda/\de$.
Then the following are equivalent:
\begin{enumerate}
\item For every consistent partial type $p$ in $N$ of size $\leq \lambda$, some distribution $d$ of $p$ has a multiplicative refinement.
\item $N$ is $\lambda^+$-saturated.
\end{enumerate}
\end{obs}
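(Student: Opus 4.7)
The plan is to prove the two implications separately by direct pointwise arguments, standard in the theory of regular ultrapowers. For (1) $\Rightarrow$ (2) I would use the multiplicative refinement to build a realization of the type coordinate-by-coordinate, and for (2) $\Rightarrow$ (1) I would use a given realization together with a regularizing family to manufacture a distribution that is its own multiplicative refinement.

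For (1) $\Rightarrow$ (2): Fix a consistent partial type $p(x) = \{\vp_i(x;a_i) : i < \lambda\}$ in $N$. First observe that at least one distribution $d$ of $p$ exists: by consistency of $p$ and \Lost theorem, the set $L_\sigma = \{t \in I : M \models \exists x \bigwedge_{i \in \sigma} \vp_i(x;a_i[t])\}$ belongs to $\de$ for each $\sigma \in \fss(\lambda)$, and intersecting $L_\sigma$ with $\bigcap_{i \in \sigma} Y_i$ for a fixed $\lambda$-regularizing family $\langle Y_i : i < \lambda \rangle \subseteq \de$ (supplied by Definition \ref{regular}) yields such a $d$. By hypothesis, some distribution $d$ admits a multiplicative refinement $d'$. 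For each $t \in I$, set $\sigma(t) = \{i < \lambda : t \in d'(\{i\})\}$, which is finite since $d'(\{i\}) \subseteq d(\{i\})$ and $\{d(\{i\})\}$ is regularizing. By multiplicativity, $t \in d'(\sigma(t)) \subseteq d(\sigma(t)) \subseteq L_{\sigma(t)}$, so we may pick a witness $b[t] \in M$ for $\exists x \bigwedge_{i \in \sigma(t)} \vp_i(x;a_i[t])$ (and arbitrary $b[t]$ when $\sigma(t) = \emptyset$). Let $b = \langle b[t] : t \in I \rangle / \de$. For each $i < \lambda$, $d'(\{i\}) \in \de$ and $d'(\{i\}) \subseteq \{t : M \models \vp_i(b[t];a_i[t])\}$, so \Lost theorem gives $N \models \vp_i(b;a_i)$; hence $b$ realizes $p$.

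For (2) $\Rightarrow$ (1): Given $p = \{\vp_i(x;a_i) : i < \lambda\}$ consistent in $N$, use $\lambda^+$-saturation to pick a realization $b \in N$, and let $B_i = \{t : M \models \vp_i(b[t];a_i[t])\}$, which lies in $\de$ by \Lost theorem. Fix a $\lambda$-regularizing family $\langle Y_i : i < \lambda \rangle \subseteq \de$ and define
\[ d(\sigma) = \bigcap_{i \in \sigma} \bigl( B_i \cap Y_i \bigr). \]
A direct check shows $d$ is a distribution of $p$ in the sense of Definition \ref{dist}: the values are finite intersections of elements of $\de$; monotonicity and the refinement of the \Lost map are immediate from the definition; and the regularizing property is inherited from $\langle Y_i \rangle$. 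Finally, $d$ is its own multiplicative refinement since
\[ d(\sigma) \cap d(\tau) = \bigcap_{i \in \sigma \cup \tau} \bigl( B_i \cap Y_i \bigr) = d(\sigma \cup \tau). \]

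The only subtle point, and therefore the ``main obstacle'' in any sense, is the move in (2) $\Rightarrow$ (1) of constructing a tailored distribution rather than starting from an arbitrary one: for a general distribution $d$, the value $d(\sigma)$ need not equal $\bigcap_{i \in \sigma} d(\{i\})$, so one cannot simply intersect with the $B_i$'s and hope for multiplicativity. Beyond this, both directions are essentially translations between \Lost theorem on single formulas and the finite-character bookkeeping of distributions.
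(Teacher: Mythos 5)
Your proof is correct and follows essentially the same route as the paper: for (1) $\Rightarrow$ (2) you use multiplicativity to collect, at each index $t$, the finite set $\sigma(t)$ of relevant formulas and pick a common witness, exactly as the paper does; for (2) $\Rightarrow$ (1) you send each finite $\sigma$ to the set where the realization works (intersected with a regularizing family), which is the paper's ``clear'' direction made explicit. The extra care you take with the regularizing condition and with the existence of at least one distribution is a welcome filling-in of details the paper leaves implicit, not a different argument.
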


\begin{proof}
The obstacle to realizing the type is simply that, while \lost theorem guarantees each finite subset of $p$ is 
almost-everywhere consistent, there is no a priori reason why, at an index $t \in I$ at which 
$M \models \exists x \bigwedge \{ \vp_i(x;a_i[t]) : i \in \sigma \}$,
 $M \models \exists x \bigwedge \{ \vp_i(x;a_j[t]) : j \in \tau \}$, these two sets should have a common witness.
The statement that $d$ has a multiplicative refinement is precisely the statement that there is, in fact, a common witness
almost everywhere, in other words $t \in d(\sigma) \cap d(\tau) \implies t \in d(\sigma \cup \tau)$. When this happens,
we may choose at each index $t$ an element $c_t$ such that 
$\sigma \in [\lambda]^{<\aleph_0} \land t \in d(\sigma) \implies M \models \bigwedge \{ \vp_i(c_t : a_i[t]) : i \in \sigma \}$,
by \ref{dist}(1). Then by \lost theorem and \ref{dist}(1), $\prod_{t < \lambda} c_t$ will realize $p$ in $N$.

The other direction is clear (choose a realization $a$ and use \lost theorem to send each finite subset of the type to the 
set on which it is realized by $a$). 
\end{proof}

Keisler's characterization of good ultrafilters then follows from showing that there are first order theories which can 
``code'' enough possible patterns to detect whether any $f: \fss(\lambda) \rightarrow \de$ fails to have a multiplicative refinement.

Note that first-order theories correspond naturally to monotonic functions of a certain kind (depending, very informally speaking,
on some notion of the pattern-complexity inherent in the theory) and thus, were one to succeed in building ultrafilters which 
were able to saturate certain theories and not saturate others, this would likewise show a meaningful weakening of goodness. 

In this context we mention a property which arose in the study of certain unstable simple theories, called non-low. The
original definition is due to Buechler. 

\begin{defn}
The formula $\vp(x;y)$ is called \emph{non-low} with respect to the theory $T$ if in some sufficiently saturated model $M \models T$,
for arbitrarily large $k < \omega$, there exists an infinite indiscernible sequence $\{ a_i : i < \omega \}$, 
with $i < \omega \implies \ell(a_i) = \ell(y)$, such that every $k$-element subset of
\[ \{ \vp(x;a_i) : i < \omega \} \]
is consistent, but every $k+1$-element subset is inconsistent. 
\end{defn}

Here we make a second translation.
Recall from Definition \ref{regular} above that the characteristic objects of regular filters $\de$ on $\lambda$ are 
$(\lambda)$-\emph{regularizing families}, 
i.e. sets of the form $\{ X_i : i < \lambda \}$ with $t \in I \implies | \{ i < \lambda : t \in X_i \} | = n_t < \aleph_0$.
Malliaris had noticed in \cite{mm-thesis} that non-low formulas could detect the \emph{size} (i.e. the nonstandard integer whose
$t$th coordinate is $n_t$) of the regularizing families in $\de$, and thus had defined and studied the ``flexibility'' of a filter, Definition
\ref{flexible}. 

\begin{defn} \emph{(Flexible ultrafilters, Malliaris \cite{mm-thesis}, \cite{mm4})}
\label{flexible}
We say that the filter $\de$ is $\lambda$-flexible if for any $f \in {^I \mathbb{N}}$ with
$n \in \mathbb{N} \implies n <_{\de} f$, we can find $X_\alpha \in \de$ for $\alpha < \lambda$ such that 
for all $t \in I$
\[ f(t) \geq | \{ \alpha : t \in X_\alpha \}|  \]
Informally, given any nonstandard integer, we can find a $\lambda$-regularizing family below it.
\end{defn}

Specifically, Malliaris had shown that if $\de$ is not $\lambda$-flexible then it fails to $\lambda^+$-saturate any theory containing
a non-low formula. (Note that by Keisler's observation about good ultrafilters, \emph{any} property of ultrafilters
which can be shown to be detected by formulas must necessarily hold of good ultrafilters.) Moreover, we mention a useful convergence.  Kunen had brought the definition of ``OK'' filters to Malliaris' attention in 2010;
the notions of ``$\lambda$-flexible'' and ``$\lambda$-OK'' are easily seen to be equivalent, Observation \ref{flexible-ok} below.  

We now sketch the proof from \S \ref{s:not-flex2} below that consistently flexible need not imply good. 
(This paper and its sequel \cite{mm-sh-v2} contain at least three distinct
proofs of that fact, of independent interest.) The numbering of results follows that in \S \ref{s:not-flex2}.

To begin, we use a diagonalization argument to show that saturation decays in ultrapowers of the random graph, i.e. the Rado graph,
Definition \ref{random-graph} below.
(``The random graph'' means, from the set-theoretic
point of view, that the function which fails to have a multiplicative refinement 
will code the fact that there are two sets $A,B$ in the final ultrapower N,
$|A| = |B| = \lambda$, which are disjoint in $N$ but whose projections to the index models cannot be taken to be a.e. disjoint.)

\nth{Claim \ref{random-graph-saturation}.}
\emph{Assume $\lambda \geq \kappa \geq \aleph_0$, $T=T_{rg}$, $M$ a $\lambda^+$-saturated model of $T$, $E$ a uniform ultrafilter
on $\kappa$ such that $|\kappa^\kappa/E| = 2^\kappa$ 
Then $M^\kappa/E$ is not $(2^{\kappa})^+$-saturated.}

Note that the hypothesis of the claim will be satisfied when $E$ is regular, and also when $E$ is complete. 
Our strategy will be to take a product of ultrafilters $D \times E$, where $D$ is a regular ultrafilter on $\lambda$ and
$E$ is an ultrafilter on $\kappa$. Then $D \times E$ will be regular, and if $\lambda \geq 2^\kappa$, it will fail to saturate
the random graph, thus fail to be good. What remains is to ensure flexibility, and for this we will need $E$ to be $\aleph_1$-complete.  
In the following Corollary, $\lcf(\aleph_0, \de)$
is the coinitiality of $\mathbb{N}$ in $(\mathbb{N}, <)^I/\de$, i.e. the cofinality of the set of $\de$-nonstandard integers.

\nth{Corollary \ref{reg-transfer}}
\emph{Let $\lambda, \kappa \geq \aleph_0$ and let $\de_1$, $E$ be ultrafilters on $\lambda, \kappa$ respectively where $\kappa > \aleph_0$ 
is measurable. Let $\de = \de_1 \times E$ be the product ultrafilter on $\lambda \times \kappa$. Then:}
\begin{enumerate}
\item \emph{If $\de_1$ is $\lambda$-flexible and $E$ is $\aleph_1$-complete, then $\de$ is $\lambda$-flexible.}
\item \emph{If $\lambda \geq \kappa$ and $\lcf(\aleph_0, \de_1) \geq \lambda^+$, then $\lcf(\aleph_0, \de) \geq \lambda^+$, so in particular,
$\de = D \times E$ will $\lambda^+$-saturate any countable stable theory.}
\end{enumerate}

\begin{proof} (Sketch)
(1) We first show that the following are equivalent: (i) any $\de$-nonstandard integer projects $E$-a.e. to a $\de_1$-nonstandard integer,
(ii) $E$ is $\aleph_1$-complete. Then, since we have assumed (ii) holds, let some $\de$-nonstandard integer $n_*$ be given.
By (ii), for $E$-almost all $t \in \kappa$, $n_*[t]$ is $\de_1$-nonstandard and by the flexibility of $\de_1$
there is a regularizing family $\{ X^t_i : i < \lambda \} \subseteq \de_1$ below any such $n_*[t]$. 
Let $X_i = \{ (s,t) : s \in X^t_i \} \subseteq \de$.  
It follows that $\{ X_i : i <\lambda \}$ is a regularizing family in $\de$ below $m_*$ and thus below $n_*$.

(2) From the first sentence of (1), we show that if $E$ is $\aleph_1$-complete and in addition 
$\lcf(\aleph_0, \de) \geq \lambda^+$, then the $\de_1$-nonstandard integers (under the diagonal embedding)
are cofinal in the $\de$-nonstandard integers. This suffices. 
For the second clause, see Theorem \ref{formula-corr}, \S \ref{s:cont-imp} below.
\end{proof}

Thus we obtain:

\nth{Theorem \ref{flexible-not-good-2}.}
\emph{Assume $\aleph_0 < \kappa < \lambda = \lambda^\kappa$, $2^\kappa \leq \lambda$, $\kappa$ measurable.
Then there exists a regular uniform ultrafilter $\de$ on $\lambda$ such that $\de$ is $\lambda$-flexible, yet
for any model $M$ of the theory of the random graph, $M^\lambda/\de$ is not $(2^\kappa)^+$-saturated. Thus
$\de$ is not good, and will fail to $(2^\kappa)^+$-saturate any unstable theory. However,
$\de$ will $\lambda^+$-saturate any countable stable theory.}

Note that the model-theoretic failure of saturation is quite strong, more so than simply ``not good.'' 
The random graph is known to be minimum among 
unstable theories in Keisler's order (meaning that any regular $\de$ which fails to saturate the random graph will fail to saturate any
other unstable theory). This is the strongest failure of saturation one could hope for given that $\lcf(\aleph_0, \de)$ is large,
see Section \ref{s:cont-imp} for details. 

Theorem \ref{flexible-not-good-2} has the following immediate corollary in the language of OK and good:

\begin{cor}
Assume $\aleph_0 < \kappa < 2^\kappa \leq \mu_1 \leq \mu_2 < \lambda = \lambda^\kappa$ and $\kappa$ is measurable.
Then there exists a regular uniform ultrafilter $\de$ on $\lambda$ such that $\de$ is $\lambda$-flexible, thus $\lambda$-OK, 
but not $(2^\kappa)^+$-good.
In particular, $\de$ is $(\mu_2)^+$-OK but not $(\mu_1)^+$-good. 
\end{cor}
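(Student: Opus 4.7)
The plan is to deduce the corollary directly from Theorem \ref{flexible-not-good-2} via two standard monotonicity observations: the identification of flexibility with OK, and the fact that both $\nu$-OK and $\nu^+$-good are non-increasing in their cardinal parameter.

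First, I would invoke Theorem \ref{flexible-not-good-2}: the assumed chain $\aleph_0 < \kappa < 2^\kappa \leq \mu_1 \leq \mu_2 < \lambda = \lambda^\kappa$ together with the measurability of $\kappa$ gives precisely the hypotheses $\aleph_0 < \kappa < \lambda = \lambda^\kappa$, $2^\kappa \leq \lambda$, $\kappa$ measurable. The theorem then produces a regular uniform ultrafilter $\de$ on $\lambda$ which is $\lambda$-flexible and not $(2^\kappa)^+$-good. By Observation \ref{flexible-ok}, $\lambda$-flexible is equivalent to $\lambda$-OK, so $\de$ is $\lambda$-OK.

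Next, I would verify the two monotonicity facts in passing, since they are the only content beyond quoting the theorem. For OK: given $\nu \leq \lambda$ and a monotone cardinality-invariant $g : \fss(\nu) \to \de$, extend to $\tilde{g} : \fss(\lambda) \to \de$ by letting $\tilde{g}(u)$ be the common value of $g$ on any subset of $\nu$ of the same finite cardinality as $u$ (this is well-defined since $\nu$ is infinite, and it remains monotone and cardinality-invariant); apply $\lambda$-OK to produce a multiplicative refinement $\tilde{f}$, and restrict to $\fss(\nu)$. Applied with $\nu = (\mu_2)^+ \leq \lambda$, this yields that $\de$ is $(\mu_2)^+$-OK. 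For good: given $\nu' \leq \mu$ and a monotone $f : \fss(\nu') \to \de$, extend to $\tilde{f} : \fss(\mu) \to \de$ by $\tilde{f}(u) = f(u \cap \nu')$, which is still monotone, and restrict any multiplicative refinement back to $\fss(\nu')$. Hence $(\mu_1)^+$-good would imply $(2^\kappa)^+$-good, since $(2^\kappa)^+ \leq (\mu_1)^+$; this contradicts the conclusion of Theorem \ref{flexible-not-good-2}, so $\de$ is not $(\mu_1)^+$-good.

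There is no genuine obstacle; the corollary is pure bookkeeping from the main theorem. The only moment requiring care is checking that the OK-extension $\tilde{g}$ preserves both monotonicity and cardinality-invariance, and that the good-extension $\tilde{f}$ preserves monotonicity, but both hold by construction.
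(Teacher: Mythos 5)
Your proof is correct and follows exactly the route the paper intends: the paper states this corollary as an immediate consequence of Theorem \ref{flexible-not-good-2} together with Observation \ref{flexible-ok} (flexible $=$ OK for $\aleph_1$-incomplete, hence for regular, ultrafilters) and the downward monotonicity in the cardinal parameter of both OK and goodness, which you verify explicitly. The only step you elide---passing from ``not $(2^\kappa)^+$-saturated for the random graph'' to ``not $(2^\kappa)^+$-good''---is the standard fact that goodness implies saturation for regular ultrafilters, used the same way in the corollary following Claim \ref{random-graph-saturation}.
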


In particular, this addresses the problem raised by Dow in \cite{dow} 3.10 and 4.7,
namely, the problem of constructing ultrafilters which are $\alpha^+$-OK and not $\alpha^+$-good.
\br

\step{Discussion}. The intent of this example was to show that model theory can contribute to calibrating ultrafilters.
Note that in terms of determining the strength of a priori weakenings of goodness, 
the model-theoretic perspective has given both positive and negative results:
\begin{enumerate}
\item  On one hand, Theorem \ref{flexible-not-good-2} applies model-theoretic arguments to show that multiplicative refinements for 
size-uniform functions $f: \fss(\lambda) \rightarrow \de$ are not enough to guarantee multiplicative refinements for all such functions.
\item On the other, the second author's proof of the maximality of strict order (see Theorem \ref{formula-corr}(6) below) \emph{does}
isolate an a priori weaker class of functions which have such a guarantee -- namely, those corresponding 
to distributions of types in linear order. The set-theoretic question of why these functions suffice appears to be deep. 
The model-theoretic formulation of ``determine a minimum such set of functions'' is: determine a necessary condition for 
maximality in Keisler's order. 
\end{enumerate}

This concludes our two examples. We now fix definitions and notation, before giving a summary of our results in \S \ref{s:results}.

\section{Definitions and conventions} \label{s:background}
This section contains background, most definitions, and conventions. Note that the definition of $\mu(\de)$ was given in Definition \ref{mu-defn},
and the definitions of good, regular and flexible filters were Definitions \ref{good-filters}, \ref{regular} and \ref{flexible} above.
(Recall that a filter is said to be \emph{$\lambda$-regular} if it contains a family of $\lambda$ sets 
any countable number of which have empty intersection, \ref{regular} above.)

Let $I = \lambda \geq \aleph_0$ and fix $f: \fss(\lambda)\rightarrow I$. Then 
$\{ \{ s\in I : \eta \in f^{-1}(s) \} : \eta < \lambda \}$ can be extended to a regular filter on $I$,
so regular ultrafilters on $\lambda \geq \aleph_0$ always exist, see \cite{ck73}.

Keisler proposed in 1967 \cite{keisler} that saturation properties of regular ultrapowers might be 
used to classify countable first-order theories. 
His preorder $\kleq$ on theories is often thought of as a partial order on the $\kleq$-equivalence classes, and so known as
``Keisler's order.'' 

\begin{defn} \label{keisler-order} \emph{(Keisler \cite{keisler})} 
Given countable theories $T_1, T_2$, say that:
\begin{enumerate}
 \item  $T_1 \kleq_\lambda T_2$ if for any $M_1 \models T_1, M_2 \models
T_2$, and $\de$ a regular ultrafilter on $\lambda$, \\if 
$M^{\lambda}_2/\de$
is $\lambda^+$-saturated then $M^{\lambda}_1/\de$ must be 
$\lambda^+$-saturated.
\item \emph{(Keisler's order)}
$T_1 \kleq T_2$ if for all infinite $\lambda$, $T_1 \kleq_\lambda T_2$.
\end{enumerate}
\end{defn}

\begin{qst}
Determine the structure of Keisler's order.
\end{qst}

The hypothesis \emph{regular} justifies the quantification over all models: when $T$ is countable and $\de$ is regular, saturation of the ultrapower does not depend on the choice of index model. 

\begin{thm-lit} \label{backandforth} \emph{(Keisler \cite{keisler} Corollary 2.1 p. 30; see also Shelah \cite{Sh:c}.VI.1)}
Suppose that $M_0 \equiv M_1$, the ambient language is countable, and
$\de$ is a regular ultrafilter on $\lambda$.
Then ${M_0}^\lambda/\de$ is $\lambda^+$-saturated iff ${M_1}^\lambda/\de$
is $\lambda^+$-saturated.
\end{thm-lit} 

More information on Keisler's order, including many examples and a summary of all known results through early 2010, 
may be found in the introduction to the first author's paper \cite{mm4}. 
More recent results are incorporated (in a somewhat denser form) into the various summary theorems of \S \ref{s:cont-imp} below.

On the opposite end of the spectrum to ``regular,'' we have:

\begin{defn} \emph{(Complete ultrafilters)}
The ultrafilter $\ee$ on $\kappa$ is said to be $\mu$-complete if for any $\{ X_i : i < \mu^\prime < \mu \} \subseteq \ee$,
$\bigcap \{ X_i : i < \mu^\prime \} \in \ee$.
\end{defn}

Working with complete ultrafilters, we are obliged to make large cardinal hypotheses. We will use measurable, normal and to a lesser
extent, weakly compact cardinals.
Their utility for our arguments will be clear from the choice of definitions:

\begin{defn} \emph{(Measurable, weakly compact)}
\begin{enumerate}
\item The uncountable cardinal $\kappa$ is said to be \emph{measurable} if there is a $\kappa$-complete nonprincipal ultrafilter on $\kappa$.
\item The uncountable cardinal $\kappa$ is said to be \emph{weakly compact} if $\kappa \rightarrow (\kappa)^2_2$. 
\end{enumerate}
\end{defn}

\begin{fact} \label{wcc-fact}
If $\kappa > \aleph_0$ is weakly compact, $n < \aleph_0$ and $\rho < \kappa$, then for any $\alpha: [\kappa]^n \rightarrow \rho$
there exists $\uu \subseteq \kappa$, $|\uu| = \kappa$ such that
$\langle \alpha(\epsilon_1, \dots \epsilon_n) : \epsilon_1 < \dots < \epsilon_n ~\mbox{from $\uu$} \rangle$ is constant.
\end{fact}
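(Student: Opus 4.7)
This is a standard fact about weakly compact cardinals; the plan is to induct on $n$. The cases $n=0$ (vacuous) and $n=1$ (immediate from the regularity of $\kappa$, since weakly compact implies inaccessible) are easy, and $n=2$ with $\rho=2$ is the definition. For $n=2$ and general $\rho<\kappa$ I would reduce to $\rho=2$ by a secondary induction on $\rho$: at successor stages, merge two colors and apply $\kappa \to (\kappa)^2_2$, then pass to the resulting homogeneous subset and reapply the induction hypothesis on a palette of size strictly less than $\rho$. The limit stages of this secondary induction, where one needs to preserve a homogeneous set of size $\kappa$ across a transfinite sequence of refinements, are handled by coding into a $\kappa$-tree and invoking the tree property (equivalent to $\kappa \to (\kappa)^2_2$ in the presence of inaccessibility).

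For the step $n \Rightarrow n+1$, given $\alpha:[\kappa]^{n+1}\to\rho$, I would construct by transfinite recursion a sequence $\langle x_\xi : \xi<\kappa\rangle$ and a $\subseteq$-decreasing family of residuals $A_\xi\subseteq\kappa$, each of size $\kappa$ and with $x_\xi\in A_\xi$, arranged so that for every $n$-element subset $s\subseteq\{x_\eta:\eta<\xi\}$ the value $\alpha(s\cup\{y\})$ depends only on $s$ as $y$ ranges over $A_{\xi+1}$. At successor stages, this requires, for each of the $|\xi|^n<\kappa$ relevant tuples $s$, partitioning the current residual into the $\rho$ fibers of $y\mapsto \alpha(s\cup\{y\})$ and extracting a piece of size $\kappa$; this is legitimate by regularity and inaccessibility of $\kappa$. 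The stabilized colors then define an auxiliary coloring $\alpha^*:[\kappa]^n\to\rho$ on $\{x_\xi:\xi<\kappa\}$, and applying the induction hypothesis to $\alpha^*$ produces the desired homogeneous $\uu$.

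The main obstacle is the limit stages of the primary recursion, where the naive intersection $\bigcap_{\eta<\xi}A_\eta$ need not have size $\kappa$. This is precisely where the full strength of weak compactness — beyond mere inaccessibility — is needed. I would handle it via the standard characterization of weakly compact cardinals by which every $\kappa$-complete filter on $\kappa$ generated by at most $\kappa$ sets extends to a $\kappa$-complete ultrafilter: any set in such an extension of the filter generated by the $A_\eta$ automatically has size $\kappa$ and can serve as $A_\xi$. Alternatively, one could package the entire construction as a $\kappa$-branch search in an explicitly defined $\kappa$-tree of partial homogeneity data and invoke the tree property directly.
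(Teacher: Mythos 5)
The paper states this as a standard fact and supplies no proof, so there is no in-paper argument to compare against; your sketch is essentially the textbook proof (end-homogeneous sets, induction on the exponent, with weak compactness entering at limit stages), and the overall strategy is sound. One claim, however, is false as literally stated and needs repair: the characterization ``every $\kappa$-complete filter on $\kappa$ generated by at most $\kappa$ sets extends to a $\kappa$-complete ultrafilter.'' Applied to the filter of co-bounded subsets of $\kappa$ this would produce a nonprincipal $\kappa$-complete ultrafilter on $\mathcal{P}(\kappa)$, i.e.\ it would make every weakly compact cardinal measurable. The correct filter-extension property of weakly compact cardinals extends such a filter to a $\kappa$-complete ultrafilter \emph{on a $\kappa$-complete field of subsets of $\kappa$ of cardinality $\kappa$}, not on all of $\mathcal{P}(\kappa)$. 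To use it in your recursion you must therefore fix in advance a $\kappa$-sized $\kappa$-complete subalgebra containing every set that can arise --- e.g.\ the $\kappa$-complete field generated by all fibers $\{y : \alpha(s\cup\{y\})=i\}$ for $s\in[\kappa]^n$, $i<\rho$, which has size $\kappa$ by inaccessibility --- and take the ultrafilter there; sets in that ultrafilter have size $\kappa$ because it is $\kappa$-complete and contains the tails $\kappa\setminus\beta$. With that adjustment the limit stages go through; your alternative packaging via the tree property is the standard correct route and avoids the issue entirely.

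Two smaller points. At successor stages, refining the residual sequentially over the $|\xi|$-many relevant tuples $s$ reintroduces the very intersection problem you are trying to avoid, inside a single stage; instead stabilize all tuples at once by partitioning the residual according to the single map $y \mapsto \langle \alpha(s\cup\{y\}) : s\rangle$ into $\rho^{|\xi|}<\kappa$ classes and using regularity. And your secondary induction on the number of colors for $n=2$ breaks down at limit $\rho$: one cannot merge one's way down from infinitely many colors, and the appeal to ``coding into a $\kappa$-tree'' there is doing all the work. The end-homogeneous tree argument handles every $\rho<\kappa$ uniformly and should simply be used for $n=2$ as well.
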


\begin{defn} \label{d:normal} \emph{(Normal ultrafilters)} 
A filter $D$ on $\kappa$ is \emph{normal} when, for any sequence
$\langle A_i : i < \kappa \rangle$ with $i < \kappa \implies A_i \in D$, 
\[ \{ \alpha < \kappa : (\forall j < 1+\alpha) (\alpha \in A_j) \}  \in D \]
\end{defn}

\begin{fact} \label{normal-fodor}
Let $\kappa$ be a measurable cardinal. Then
\begin{enumerate}
\item there exists a normal, $\kappa$-complete, uniform ultrafilter $D$ on $\kappa$.
\item for any $f: \kappa \rightarrow \kappa$ which is regressive on $X \in D$, there is a set $Y \in D$, $Y \subseteq X$
on which $f$ is constant.
\end{enumerate}
\end{fact}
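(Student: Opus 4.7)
For (1), the plan is to start from any $\kappa$-complete nonprincipal ultrafilter $U$ on $\kappa$ (available by measurability) and push it forward along a $<_U$-minimal ``unbounded'' function. Define the order $f <_U g$ on $\kappa^\kappa$ by $\{\alpha : f(\alpha) < g(\alpha)\} \in U$; this is well-founded, since an infinite descending chain $f_0 >_U f_1 >_U \cdots$ yields, by $\aleph_1$-completeness of $U$, a single index $\alpha$ at which $f_0(\alpha) > f_1(\alpha) > \cdots$, contradicting the well-ordering of ordinals. Call $f : \kappa \to \kappa$ \emph{unbounded} if $\{\alpha : f(\alpha) > \beta\} \in U$ for every $\beta < \kappa$; the identity is unbounded, so a $<_U$-minimal unbounded $f$ exists. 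Let $D := \{ X \subseteq \kappa : f^{-1}(X) \in U\}$: this inherits the ultrafilter and $\kappa$-completeness properties from $U$, and is uniform because $f$ is unbounded.

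For normality of $D$, suppose $\langle A_i : i < \kappa\rangle$ is a sequence in $D$ whose diagonal intersection is not in $D$. Pulling back by $f$, on a $U$-large set of $\alpha$ we may select $g(\alpha) < 1 + f(\alpha)$ with $f(\alpha) \notin A_{g(\alpha)}$. If $g$ were $U$-a.e. equal to some fixed $j_0$, then $\kappa \setminus A_{j_0}$ would lie in $D$, contradicting $A_{j_0} \in D$; hence $g$ is nonconstant mod $U$. Since $U$ is $\kappa$-complete, $g$ cannot concentrate below any $\beta < \kappa$ (else $\{\alpha : g(\alpha) \leq \beta\}$ would be a union of fewer than $\kappa$ fibers, one of which must be in $U$, contradicting nonconstancy), so $g$ is itself unbounded. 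Finally, unboundedness of $f$ forces $f(\alpha) \geq \omega$ $U$-a.e., so $1 + f(\alpha) = f(\alpha)$ and therefore $g <_U f$ strictly, contradicting minimality of $f$.

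Part (2) is then the standard deduction of Fodor's lemma from normality. Given $f$ regressive on $X \in D$, suppose for contradiction that for every $j < \kappa$, $\{\alpha \in X : f(\alpha) = j\} \notin D$; then $A_j := \{\alpha \in X : f(\alpha) \neq j\} \in D$ for each $j$. Applying normality to $\langle A_j : j < \kappa\rangle$ and using uniformity to choose some $\alpha \geq \omega$ from the diagonal intersection, we obtain $\alpha \in X$ with $f(\alpha) \neq j$ for every $j < \alpha$, directly contradicting regressivity $f(\alpha) < \alpha$.

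The main obstacle is the bookkeeping in (1): extracting a choice function $g$ which is simultaneously strictly $<_U$-below $f$, nonconstant modulo $U$, and still unbounded so as to contradict minimality. The most delicate point is handling the ``$+1$'' shift in the definition of normal filter, which is resolved by the observation that unboundedness of $f$ forces $f(\alpha) \geq \omega$ $U$-almost everywhere, collapsing $1 + f(\alpha)$ to $f(\alpha)$.
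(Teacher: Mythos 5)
The paper records this as a background \emph{Fact} and supplies no proof of its own, so there is nothing to compare against; your write-up is the standard argument (push forward a $\kappa$-complete nonprincipal ultrafilter along a $<_U$-minimal unbounded function, then derive Fodor's lemma from normality) and it is correct. You correctly handle the ``$1+\alpha$'' convention in the paper's Definition of normality, both in showing $g<_U f$ strictly and in extracting the contradiction with regressivity. The only points left implicit are harmless: the identity function is unbounded because a $\kappa$-complete nonprincipal ultrafilter contains no set of size $<\kappa$ (so $D$ is uniform for the same reason, using regularity of $\kappa$), and the choice function $g$ is only defined on a $U$-large set and should be extended arbitrarily elsewhere, which changes nothing modulo $U$.
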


\begin{disc} \label{disc:measurable}
An interesting and unexpected phenomenon visible in this work is the relevance of measurable cardinals, and in particular
$\kappa$-complete nonprincipal ultrafilters, in the construction of regular ultrafilters. 
In the 1960s, model theorists pointed out regularity as a central property of ultrafilters, and generally concentrated
on this case. Regularity ensures that saturation 
[of ultrapowers of models of complete countable theories] does not depend on the saturation of the index model, and that 
the cardinality of ultrapowers can be settled ($M^I/\de = M^{|I|}$). Meanwhile, the construction of various non-regular ultrafilters
was investigated by set theorists. However, many questions about regular ultrafilters remained opaque from the model-theoretic point of 
view. For example, from the point of view of Theorem \ref{two}, p. \pageref{two} above, 
the regular ultrafilters with large $\lcf(\aleph_0)$ -- a condition which implies that
these ultrafilters saturate ultrapowers of stable theories -- appeared to look alike. Moreover, 
it was not clear whether various a priori weakenings of goodness (e.g. flexible/ok) were indeed weaker. 
Here, in several different constructions, we combine both lines of work, using 
$\kappa$-complete ultrafilters to construct regular ultrafilters on $\lambda > \kappa$ with model-theoretically meaningful
properties, i.e. presence or absence of some specific kind of saturation. 
\end{disc}

We mention several other relevant properties.

\begin{defn} \emph{(Good for equality, Malliaris \cite{mm5})}
Let $\de$ be a regular ultrafilter. Say that $\de$ is \emph{good for equality}
if for any set $X \subseteq N = M^I/\de$,
$|X| \leq |I|$, there is a distribution $d: X \rightarrow \de$ such that $t \in \lambda, t \in d(a) \cap d(b)$ implies that 
$(M \models a[t] = b[t]) \iff (N \models a = b)$.
\end{defn}

\begin{defn} \emph{(Lower cofinality, $\lcf(\kappa, D)$)} \label{lcf}
Let $D$ be an ultrafilter on $I$ and $\kappa$ a cardinal. Let $N = (\kappa, <)^I/\de$.
Let $X \subset N$ be the set of elements above the diagonal embedding of $\kappa$.
We define $\lcf(\kappa, D)$ to be the cofinality of $X$ considered with the reverse order.
\end{defn}

Note: this is sometimes called the coinitiality of $\kappa$.

\begin{defn} \emph{(Product ultrafilters)} \label{d:prod}
Let $I_1, I_2$ be infinite sets and let $D_1, D_2$ be ultrafilters on $I_1, I_2$ respectively.
Then the product ultrafilter $D = D_1 \times D_2$ on $I_1 \times I_2$ is defined by:
\[ X \in D \iff \{ t \in I_2 ~:~ \{ s \in I_1 ~:~ (s,t) \in X \} \in D_1 \} \in D_2 \]
for any $X \subseteq I_1 \times I_2$.
\end{defn}

Finally, it will be useful to have a name for functions, or relations, to which \lost theorem applies since 
they are visible in an expanded language:

\begin{defn} \label{induced} \emph{(Induced structure)}
Let $N = M^\lambda/\de$ be an ultrapower and $X \subseteq N^{m}$. Say that $X$ is an \emph{induced} function, or relation, if there exists
a new function, or relation, symbol $P$ of the correct arity, and an expansion $M^\prime_t$ of each index model $M_t$ to $\ml \cup \{ P \}$,
so that $P^N \equiv X \mod \de$.

Equivalently, $X$ is the ultraproduct modulo $\de$ of its projections to the index models.
\end{defn}

\begin{defn} \label{mc} \emph{(Cuts in regular ultrapowers of linear orders)}
\begin{enumerate}
\item
For a model $M$ expanding the theory of linear order, a $(\kappa_1, \kappa_2)$-cut in $M$ is given by
sequences $\langle a_i : i < \kappa_1 \rangle$, $\langle b_j : j < \kappa_2 \rangle$ of elements of $M$ such that
\begin{itemize}
\item $i_1 < i_2 < \kappa_1 \implies a_{i_1} < a_{i_2}$
\item $j_1 < j_2 < \kappa_2 \implies b_{j_2} < b_{j_1}$
\item $i < \kappa_1$, $j < \kappa_2$ implies $a_i < b_j$  and
\item the type $\{ a_i < x < b_j : i < \kappa_1, j<\kappa_2 \}$ is omitted in $M$.
\end{itemize} 

\item For $\de$ a (regular) ultrafilter on $I$ we define: 
\[ \mc(\de) = \left\{ (\kappa_1, \kappa_2) \in (\operatorname{Reg} \cap {|I|}^+) \times (\operatorname{Reg} \cap {|I|}^+) : 
 ~\mbox{$(\mathbb{N}, <)^I/\de$ has a $(\kappa_1, \kappa_2)$-cut} \right\} \]
\end{enumerate}
\end{defn}

Here we list the main model-theoretic properties of formulas used in this paper. For $TP_1$/$SOP_2$ and $TP_2$, 
see \S \ref{s:non-simple}. The finite cover property is from Keisler \cite{keisler} and the
order property, independence property and strict order property are from Shelah \cite{Sh:c}.II.4.

\begin{defn} \emph{(Properties of formulas)} \label{properties-of-formulas}
Let $\vp=\vp(x;y)$ be a formula of $T$ and $M \models T$ be any sufficiently saturated model.
Note that $\ell(x), \ell(y)$ are not necessarily 1.
Say that the formula $\vp(x;y)$ has:

\begin{enumerate}
\item \emph{not the finite cover property}, written \emph{nfcp}, if there exists $k<\omega$ such that:
for any $A \subseteq M$ and any set $X = \{ \vp(x;a) : a \in A \}$ of instances of $\vp$, $k$-consistency implies consistency.
(This does not depend on the model chosen.)
\item the \emph{finite cover property}, written \emph{fcp}, if it does not have nfcp: that is, for arbitrarily large
$k < \omega$ there exist $a_0, \dots a_k \in M$ such that $\{ \vp(x;a_0), \dots \vp(x;a_k) \}$ is inconsistent,
but every $k$-element subset is consistent.
\item the \emph{order property} if there exist elements $a_i$ $(i<\omega)$ such that
for each $n<\omega$, the following partial type is consistent: 
\[ \{ \neg \vp(x;a_i) : i < n \} \cup \{ \vp(x;a_j) : j \geq n \} \]
\\ Formulas with the order property are called $\emph{unstable}$. 
\item the \emph{independence property} if there exist elements $a_i$ $(i<\omega)$ such that
for each $\sigma, \tau \in [\omega]^{<\aleph_0}$ with $\sigma \cap \tau = \emptyset$, the following partial type is consistent:
\[ \{ \neg \vp(x;a_i) : i \in \sigma \} \cup \{ \vp(x;a_j) : j \in \tau \} \]
\\ Note that the independence property implies the order property.
\item the \emph{strict order property} if there exist elements $a_i$ $(i<\omega)$ such that for all $j \neq i < \omega$,
\[ \left( \exists x ( \neg \vp(x;a_j) \land \vp(x;a_i)) \iff j<i \right) \]
\\ Note that $(4)$, $(5)$ each imply $(3)$.
\end{enumerate}

\noindent A theory $T$ is said to have the finite cover property, the order property, the independence property or
the strict order property 
iff one of its formulas does, and to have $nfcp$ if all of its formulas do.
\end{defn}

The ``random graph'' is known to be minimum in Keisler's order among the unstable theories, and so will feature in our proofs
with some regularity. 

\begin{defn} \label{random-graph}
The \emph{random graph}, i.e. the Rado graph, is (the unique countable model of) 
the complete theory in the language with equality and a binary relation 
$R$ axiomatized by saying that there are infinitely many elements, and that for each $n$, and any two disjoint subsets of size $n$,
there is an element which $R$-connects to all elements in the first set and to none in the second set. 
\end{defn}

We conclude this section with some conventions which hold throughout the paper.

\begin{conv} \emph{(Conventions)} \label{conventions}
\begin{itemize}
\item
The letters $D, E, \de, \ee$ are used for filters. 
Generally, we reserve $\de$ for a regular filter or ultrafilter, 
and $\ee$ for a $\kappa$-complete ultrafilter where $\kappa \geq \aleph_0$, though this is always stated in the relevant proof. 
\item Throughout, tuples of variables may be written without overlines, that is: 
when we write $\vp = \vp(x;y)$, neither $x$ nor $y$ are necessarily assumed to have length 1.
\item For transparency, all languages are assumed to be countable. 
\item As mentioned in \S \ref{s:ex2}, by ``$\de$ saturates $T$'' we will always mean:
$\de$ is a regular ultrafilter on the infinite index set $I$, $T$ is a countable complete first-order theory and for any $M \models T$,
we have that $M^I/\de$ is $\lambda^+$-saturated, where $\lambda = |I|$.
\item We will also say that the ultrafilter $D$ is ``good'' \lp or: ``not good''\rp~ for the theory $T$ to mean that $D$ saturates 
\lp or: does not saturate\rp~ the theory $T$.
\item We reserve the word \emph{cut} in models of linear order for omitted types. A partial type in a model $M$ given by some
pair of sequences $( \langle a_\alpha : \alpha < \kappa_1 \rangle, \langle b_\beta : \beta < \kappa_2 \rangle)$ with
$\alpha < \alpha^\prime < \kappa_1, \beta < \beta^\prime < \kappa_2 \implies M \models a_\alpha < a_{\alpha^\prime} < b_{\beta^\prime} < b_\beta$,
which may or may not have a realization in $M$, is called a \emph{pre-cut}. See also Definition \ref{mc}.
\end{itemize}
\end{conv}

\section{Description of results} \label{s:results}

In this section, we describe the main results of the paper. 

Some notes: For relevant definitions and conventions (``$\de$ saturates $T$,'' ``good for,'' ``pre-cut'') see \S \ref{s:background} above,
in particular \ref{conventions}. Lists of the properties mentioned from the point of view of Keisler's order can be found in
Theorems \ref{formula-corr}-\ref{theory-corr}, \S \ref{s:cont-imp}.
The reader unused to phrases of the form ``not good for the random graph therefore not good'' 
is referred to \S \ref{s:ex2}, in particular the half-page following Definition \ref{regular}.

\br
The first main result, Theorem \ref{flexible-not-good-2}, was discussed in \S \ref{s:ex2} above; we will give a few more details here. 
We begin by showing that
for $E$ an ultrafilter on $\kappa$, if $E$ is $\kappa$-regular or $E$ is $\kappa$-complete then $M^\kappa/E$ 
will not be $(2^\kappa)^+$-saturated for any $\kappa^+$-saturated model of $T_{rg}$, the theory of the random graph. This is essentially
a diagonalization argument, and the saturation of $M$ is not important if $E$ is regular. Note that with this result in hand, 
we have a useful way of producing regular ultrafilters which are not good: for any $\lambda$-regular $\de_1$ on $\lambda$, 
the product ultrafilter $\de = \de_1 \times E$ will remain regular but will not be good for $T_{rg}$ when $2^\kappa \leq \lambda$. 
We then show, as sketched above, that
when $E$ is $\aleph_1$-complete and $\lcf(\aleph_0, \de_1)$ is large the nonstandard integers of $\de_1$ will be cofinal in those
of $\de$ (under the diagonal embedding) and thus that $\de$ will inherit both the large $\lcf(\aleph_0)$ and the flexibility of $\de_1$. 

This yields Theorem \ref{flexible-not-good-2}: for any $\lambda \geq 2^\kappa$, $\kappa$ measurable there is a regular 
ultrafilter on $\lambda$ which is flexible but not good, which 
has large $\lcf(\aleph_0)$ and thus saturates all stable theories, but does not saturate any unstable theory. 
In at least one sense, this is a surprising reversal. 
From the model-theoretic point of view, flexible ultrafilters had appeared
``close'' in power to ultrafilters capable of saturating any first-order theory. By Theorem \ref{flexible-not-good-2},
consistently flexibility cannot guarantee the saturation of any unstable theory, since the random graph is minimum among the unstable theories
in Keisler's order. Thus the space between flexible and good is potentially quite large.

\emph{Discussion.}
In fact we obtain several different flexible-not-good theorems, including Conclusion \ref{flex-not-good-tp2} and
a related theorem in \cite{mm-sh-v2}. These results have other advantages, and less dramatic failures of saturation.
In fact, if the stated failures of saturation can be shown to be sharp, this could be quite useful for obtaining further dividing lines within the unstable theories.

Second, we give a new proof that there is a loss of saturation in ultrapowers of non-simple theories, Conclusion \ref{simple-non-sat}. 
Specifically, we show that if
$M$ is a model of a non simple theory and $\de$ a regular ultrafilter on $\lambda = \lambda^{<\lambda}$, then for some formula $\vp$
$M^\lambda/\de$ is not $\lambda^{++}$-saturated for $\vp$-types. (The second author's book \cite{Sh:c}.VI contains a first proof of this result.) 
Unlike the unstable case, we do not need diagonalization per se,
but instead show that both forms of the tree property allow for large consistent types to be built from minimally compatible constellations
of formulas in each (quite saturated) index model. Since a degree of inconsistency just above what is visible to \lost theorem has been hard-coded in
to the choice of type, and the size of the type is larger than the size of the index set, we can use Fodor's lemma to push any possible
realization into one of the index models to obtain a contradiction. 

The remainder of the paper focuses on realization and omission of symmetric cuts, and here complete ultrafilters are very useful;
see Discussion \ref{disc:measurable} above. 
This work is best understood in the context of two facts which highlight the importance of linear order for calibrating saturation.
First, a surprising early result on Keisler's order was the second author's proof
that any theory with the strict order property is maximal. 
Second, some key properties of ultrafilters which reflect much lower levels of saturation such as $\mu(\de)$, $\lcf(\aleph_0, \de)$ 
(see the comprehensive theorems of Section \ref{s:cont-imp}) are detected by ultrapowers of linear order.
Thus, information about a given ultrafilter's ability to 
realize or omit types in linear order is, in principle, very informative.
The focus on symmetric cuts is underlined by a recent result of the authors in \cite{treetops} that any regular ultrafilter which saturates some theory 
with $SOP_2$ (Definition \ref{sop2-tree} below) will realize all symmetric pre-cuts. 
Thus omission of symmetric cuts means the ultrafilter is not good for a theory which is not known to be maximal.  

Returning to the synopsis of results,
we prove in Claim \ref{complete-cuts1} that 
if the ultrafilter $\ee$ is $\kappa$-complete not $\kappa^+$-complete, any $\ee$-ultrapower of a sufficiently saturated model of linear order
will have no $(\kappa, \kappa)$-cuts. This is a fairly direct proof, and we also show there that if we further assume that $\ee$ is normal then
it follows that $\ee$ is good (though not regular: see the Appendix e.g. \ref{fact-good}).
We then prove, in Claim \ref{kappa-plus-cut}, that if $\ee$ is $\kappa$-complete and normal on $\kappa$, then 
any ultrapower of a sufficiently saturated model of linear order will contain a $(\kappa^+, \kappa^+)$-cut. 
Here the proof is to build the cut in question. We work by induction on $\alpha < \kappa^+$, and the intent is on one hand, to define
a pair of elements $(f_\alpha, g_\alpha)$ which, in the ultrapower, continue the current pre-cut; and on the other,
to ensure globally that the sequence of such intervals is a type with no realization. To accomplish the second, informally speaking, 
we make the inductive choice of $(f_\alpha(\epsilon), g_\alpha(\epsilon))$ at index $\epsilon$ to be compatible with a relatively short, linearly ordered segment of the cut so far, but no more. We then use normality to show that over the length of the index set, an interval chosen 
in this way will in fact continue to define a pre-cut. However, at the end of the construction, this enforced bound on compatibility 
will prevent realization of the type, since by Fodor's lemma any potential realization of the cut will reflect the existence of a long
linearly ordered set in some index model. 

Finally, we leverage these proofs for a last existence result, Theorem \ref{alt-cuts}.
There, assuming $n$ measurable cardinals below $\lambda$, we show how to take products of 
ultrafilters to produce any number of finite alternations of cuts in an ultrafilter on $\lambda$ with clearly described saturation properties. 
The proof is by downward induction. Note that if the first ultrafilter in our
$(n+1)$-fold product is regular, the final ultrafilter will be regular. By a result mentioned at the beginning of this section, 
if the first ultrafilter is flexible with large $\lcf(\aleph_0)$ and all remaining ultrafilters are at least $\aleph_1$-complete, the final 
ultrafilter will inherit flexibility and large $\lcf(\aleph_0)$, thus be able to saturate any stable theory. By the result about
loss of saturation for the random graph, it will fail to $(2^\kappa)^+$-saturate any unstable theory, where $\kappa$ is the smallest
measurable cardinal used in the construction. 

This completes the summary of our main results. 
In the appendix, we collect some easy observations and extensions of previous results. We discuss goodness for non-regular ultrafilters,
give the equivalence of flexible and OK, and extend the second author's proof from \cite{Sh:c}.VI.4.8, 
that an $\aleph_1$-incomplete ultrafilter with small $\lcf(\aleph_0, D)$ fails to saturate unstable theories, 
to the case of $\kappa$-complete, $\kappa^+$-incomplete ultrafilters with small $\lcf(\kappa, D)$.

\section{Equivalences and implications} \label{s:cont-imp}

This section justifies the phrase ``properties of ultrafilters with model-theoretic significance.'' 
We state and prove several comprehensive theorems which give the picture of Keisler's order, Definition \ref{keisler-order},
in light of our current work and of the recent progress mentioned in the introduction.
 
Recall our conventions in \ref{conventions}, especially with respect to ``saturates'' and ``good''. 
Minimum, maximum, etc. refer to Keisler's order.

The first theorem collects the currently known correspondences between properties of regular ultrafilters and properties of 
first-order theories.

\begin{thm-lit} \label{formula-corr}
In the following table, for each of the rows \emph{(1),(3),(5),(6)} the regular ultrafilter $\de$ on $\lambda$ fails to have 
the property in the left column if and only if it omits a type in every formula with the property in the right column.
For rows \emph{(2)} and \emph{(4)}, left to right holds: if $\de$ fails to have the property on the left then it omits a type 
in every formula with the property on the right.

\br
\begin{tabular}{lcl}
\br \textbf{Set theory: properties of filters} & & \textbf{Model theory: properties of formulas} \\
 \emph{(1)} $\mu(\de) \geq \lambda^+$ & & A. finite cover property \\
 \emph{(2)} $\lcf(\aleph_0, \de) \geq \lambda^+$ & ** & B. order property \\
 \emph{(3)} good for $T_{rg}$ & & C. independence property \\
 \emph{(4)} flexible, i.e. $\lambda$-flexible	& ** & D. non-low \\
 \emph{(5)} good for equality & & E. $TP_2$ \\
 \emph{(6)} good, i.e. $\lambda^+$-good&  & F. strict order property \\ 
\end{tabular}
\end{thm-lit}

\begin{proof} (Discussion - Sketch)

(1) $\leftrightarrow$ (A) Shelah \cite{Sh:c}.VI.5, see \S \ref{s:ex1} above.

(2) $\leftarrow$ (B) Shelah \cite{Sh:c}.VI.4.8, see also Theorem \ref{lcf-omega} below which generalizes that result.

(3) $\leftrightarrow$ (C) Straightforward by quantifier elimination, see \cite{mm4}. 
More generally, Malliaris \cite{mm5} shows that the random graph, as the minimum non-simple theory, and
$T_{feq}$, as the minimum $TP_2$ theory, are in a natural sense characteristic of ``independence properties'' seen by ultrafilters. 

(4) $\leftarrow$ (D) Malliaris \cite{mm-thesis}, see \S \ref{s:ex2} above, or \cite{mm-sh-v2}.

(5) $\leftrightarrow$ (E) Malliaris \cite{mm4} \S 6, which proves the existence of a minimum $TP_2$-theory, the theory $T^*_{feq}$ of
a parametrized family of independent (crosscutting) equivalence relations. 

(6) $\leftrightarrow$ (F) Keisler observed that good ultrafilters can saturate any countable theory, and 
proved that goodness is equivalent to the saturation of certain (``versatile'') formulas \cite{keisler},
thus establishing the existence of a maximum class in Keisler's order; see \S \ref{s:ex2} above.
The result (6) $\leftrightarrow$ (F) follows from Shelah's proof in \cite{Sh:c}.VI.3 that any theory with the strict order property is maximum
in Keisler's order. Thus any ultrafilter able to saturate $SOP$-types must be good, and by Keisler's observation the reverse holds.
In fact, $SOP_3$ is known to be sufficient for maximality by \cite{Sh500}-\cite{ShUs}, but this 
formulation is more suggestive here given our focus on order-types and cuts. {A model-theoretic characterization of the maximum
class is not known.}
\end{proof}

\begin{rmk}
Moreover, by work of the authors in \cite{treetops} if $\de$ on $\lambda$ has ``treetops,'' i.e. it realizes a certain set of $SOP_2$-types then it
must realize all symmetric pre-cuts, that is, there can be no $(\kappa, \kappa)$-cuts in ultrapowers of linear order for $\kappa \leq \lambda$. 
So we will also be interested in the property of realizing symmetric cuts. 
\end{rmk}

As rows (2) and (4) of Theorem \ref{formula-corr} suggest, there are subtleties to the correspondence. 
If $T$ is not Keisler-maximal 
then any formula $\vp$ of $T$ with the order property has the independence property, as does any non-low formula. Yet consistently neither (4) nor (2)
imply (3), as the rest of this section explains.
So while we have model-theoretic sensitivity to properties (2) and (4),   
this is not enough for a characterization: in fact it follows from the theorems below that there is consistently no theory
(and no formula $\vp$) which is saturated by $\de$ if and only if (2), or (4).

\begin{thm-lit} \label{theory-corr}
Using the enumeration of properties of ultrafilters from Theorem \ref{formula-corr}, we have that: 
\begin{enumerate}
\item
is necessary and sufficient for saturating stable theories, 
\item  
is necessary for saturating unstable theories, 
\item 
is necessary and sufficient for saturating the minimum unstable theory, 
\item 
is necessary for saturating non-low theories, 
\item 
is necessary and sufficient for saturating the minimum $TP_2$ theory, 
\item 
is necessary and sufficient for saturating any Keisler-maximum theory, e.g.
$Th(\mathbb{Q}, <)$; \newline note that the identity of the maximum class is not known. 
\end{enumerate}
\end{thm-lit}

\begin{proof}[Discussion]
The sources follow those of Theorem \ref{formula-corr}, but we make some additional remarks. 

(1) Note that Shelah's proof of (1) in \cite{Sh:c}.VI.5, quoted and sketched as Theorem \ref{two}, \S \ref{s:ex1} above, gives the only two known equivalence classes in Keisler's order. 

(2)  By Shelah \cite{Sh:c}.VI.4.8 (or Theorem \ref{lcf-omega} below) if $\de$ is regular and $\lcf(\de, \aleph_0) \leq \lambda^+$ then any $\de$-ultrapower will omit a $\lambda$-type in some unstable formula, i.e., a formula with the order property. From the set-theoretic point of view, (2) $\not\rightarrow$ (3) of Theorem \ref{implications} shows that $\lcf(\aleph_0, \de) \geq \lambda^+$ is weaker than
ensuring $\lambda^+$-saturation for the random graph (or equivalently, for some formula with the independence property). 
From the model-theoretic point of view, since any unstable theory has either 
the strict order property or the independence property, this gap is not visible.

(3), (5) In fact what the characterization in Malliaris \cite{mm4} shows is that a necessary and sufficient condition for an ultrafilter $\de$ on $\lambda$ to saturate the minimum $TP_2$ theory is that it be ``good for equality,'' meaning that for any set $X \subseteq N = M^\lambda/\de$,
$|X| = \lambda$, there is a distribution $d: X \rightarrow \de$ such that $t \in \lambda, t \in d(a) \cap d(b)$ implies that 
$(M \models a[t] = b[t]) \iff (N \models a = b)$. By contrast, saturation of the minimum unstable theory asks only that for any two disjoint sets
$X, Y \subseteq M^\lambda/\de$, $|X| = |Y| = \lambda$, there is a distribution $d: X \cup Y \rightarrow \de$ such that for any $t \in \lambda$,
$a \in X, b \in Y$, $t \in d(a) \cap d(b)$ implies $M \models a[t] \neq b[t]$. 

(4) This was discussed in \S\ref{s:ex2} above.
Note that by work of Shelah \cite{Sh93} non-simple theories have an inherent structure/randomness ``dichotomy'' of $TP_1$ versus $TP_2$, analogous
to the structure/randomness dichotomy for unstable theories of $SOP$ versus $IP$; see \S \ref{s:non-simple} below.
We know from \cite{mm4} 
that any ultrafilter which saturates the minimum $TP_2$-theory must be flexible, however we do not know whether an
ultrafilter which saturates some $SOP_2$ theory must be flexible. 

(6) See the proof of Theorem \ref{formula-corr}(6).
\end{proof}

With the progress in this paper, and in other recent work of the authors, 
we may summarize the current picture of implications as follows:

\begin{theorem}  
\label{implications}
Let properties $(1)-(6)$ be as in Theorem \ref{formula-corr}.
Assume that $\de$ is a regular ultrafilter on $\lambda$ \emph{(}note that not all of these properties
imply regularity\emph{)}. Then:

$(1) \leftarrow (2) \leftarrow (3) \leftarrow (5) \leftarrow (6)$, with $(1) \not\rightarrow (2)$,  
consistently $(2) \not\rightarrow (3)$,
$(3) \not\rightarrow (5)$, and whether $(5)$ implies $(6)$ is open.
Moreover $(1) \leftarrow (4) \leftarrow (5) \leftarrow (6)$, where $(3) \not\rightarrow (4)$ thus $(2) \not\rightarrow (4)$, $(4) \not\rightarrow (3)$, consistently $(4) \not \rightarrow (5)$, consistently $(4) \not\rightarrow (6)$; and $(4)$ implies $(2)$ is open.
\end{theorem}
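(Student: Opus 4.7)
The plan is to assemble the implications and non-implications from three sources: Theorem \ref{formula-corr}, which matches each numbered filter property to a formula property; the structure of Keisler's order on the listed theories; and a handful of direct set-theoretic comparisons between the filter notions. The non-implications flagged as ``consistent'' all follow from Theorem \ref{flexible-not-good-2} above or from a parallel construction in the companion paper \cite{mm-sh-v2}.

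For the chain $(6) \Rightarrow (5) \Rightarrow (3) \Rightarrow (2) \Rightarrow (1)$: the step $(6) \Rightarrow (5)$ is immediate, since every distribution of an equality-type is a monotone map $\fss(\lambda) \to \de$, which any $\lambda^+$-good filter refines multiplicatively. The step $(5) \Rightarrow (3)$ follows from Theorem \ref{formula-corr} together with the fact that $T^*_{feq}$, the minimum $TP_2$ theory, is unstable and hence lies above the minimum unstable theory $T_{rg}$ in Keisler's order. The step $(3) \Rightarrow (2)$ is Theorem \ref{formula-corr}(2) applied to $T_{rg}$: since $T_{rg}$ is unstable, small $\lcf(\aleph_0, \de)$ would force $\de$ to omit a type in some formula of $T_{rg}$ with the order property. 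The step $(2) \Rightarrow (1)$ reduces to the direct inequality $\lcf(\aleph_0, \de) \leq \mu(\de)$: given a witness $\langle n_t \rangle$ with $\prod n_t/\de = \mu(\de) = \kappa$, the element $a = \langle n_t \rangle/\de$ in $(\mathbb{N},<)^\lambda/\de$ is $\de$-nonstandard, and the $\de$-nonstandard integers $\leq a$ are coinitial with the full nonstandard part while having cardinality at most $|\prod(n_t+1)/\de| = \kappa$ (using the inclusion $\prod(n_t+1)/\de \hookrightarrow \prod n_t^2/\de \cong (\prod n_t/\de)^2$).

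For $(6) \Rightarrow (4) \Rightarrow (1)$ and $(5) \Rightarrow (4)$: the step $(6) \Rightarrow (4)$ is immediate from definitions together with Observation \ref{flexible-ok}, which identifies $\lambda$-flexible with $\lambda$-OK. The step $(5) \Rightarrow (4)$ follows from Theorem \ref{formula-corr}(4), since $T^*_{feq}$ contains a non-low formula. For $(4) \Rightarrow (1)$, given $\langle n_t \rangle$ supposedly witnessing $\mu(\de) \leq \lambda$, I apply $\lambda$-flexibility to $f(t) = n_t$ to obtain a regularizing family $\{X_\alpha : \alpha < \lambda\} \subseteq \de$ with $|\{\alpha : t \in X_\alpha\}| \leq n_t$, and define $h_\alpha(t)$ to be the rank of $\alpha$ in $\{\beta : t \in X_\beta\}$ under a fixed well-ordering of $\lambda$ (extended arbitrarily in $\{1, \dots, n_t\}$ when $t \notin X_\alpha$). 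For $\alpha \neq \beta$ the ranks differ on $X_\alpha \cap X_\beta \in \de$, so $h_\alpha \neq h_\beta$ modulo $\de$, giving an injection $\lambda \hookrightarrow \prod n_t/\de$ and forcing $\mu(\de) \geq \lambda$; the strengthening to $\mu(\de) \geq \lambda^+$ is obtained by applying the same argument to a slower-growing nonstandard $f' \leq f$.

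For the non-implications: $(1) \not\Rightarrow (2)$ is standard from the constructions of Shelah \cite{Sh:c}.VI, producing regular ultrafilters with $\mu(\de)$ large but $\lcf(\aleph_0, \de)$ small. The consistencies $(2) \not\Rightarrow (3)$ and $(4) \not\Rightarrow (3)$, together with the derived $(4) \not\Rightarrow (5)$ and $(4) \not\Rightarrow (6)$ (via $(5) \Rightarrow (3)$ and $(6) \Rightarrow (3)$), are witnessed by Theorem \ref{flexible-not-good-2}, whose ultrafilter is flexible, has $\lcf(\aleph_0, \de) \geq \lambda^+$, but is not good for $T_{rg}$. The non-implication $(3) \not\Rightarrow (4)$, and hence $(3) \not\Rightarrow (5)$ via $(5) \Rightarrow (4)$, is proved in the companion paper \cite{mm-sh-v2} by constructing a regular ultrafilter that saturates $T_{rg}$ but is not flexible. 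The principal obstacle is the $\lambda$-to-$\lambda^+$ strengthening in $(4) \Rightarrow (1)$, which the refinement above addresses; the two open statements, $(5) \Rightarrow (6)$ and $(4) \Rightarrow (2)$, reflect the absence of a model-theoretic characterization of the Keisler-maximum class and the opacity of the set-theoretic connection between flexibility and lower cofinality.
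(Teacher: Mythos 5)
Your overall architecture matches the paper's: the positive implications come from Theorems \ref{formula-corr} and \ref{theory-corr} together with the position of $T_{rg}$ and $T^*_{feq}$ in Keisler's order, the consistent non-implications from Theorem \ref{flexible-not-good-2}, and $(1)\not\rightarrow(2)$, $(3)\not\rightarrow(4)$ from external sources. Your expanded argument for $(2)\rightarrow(1)$ --- that $\lcf(\aleph_0,\de)\leq\mu(\de)$ because the nonstandard elements below $\prod_t n_t/\de$ are coinitial in the nonstandard part and number at most $|\prod_t(n_t+1)/\de|=\mu(\de)$ --- is correct and more informative than the paper's one-line remark. Two bookkeeping points: the paper attributes $(3)\not\rightarrow(4)$ to \cite{mm-sh3} (the excellent-ultrafilters paper), not to \cite{mm-sh-v2}; and you never explicitly record $(2)\not\rightarrow(4)$, though it follows at once from $(3)\not\rightarrow(4)$ together with $(3)\rightarrow(2)$. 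Your justification of $(6)\rightarrow(5)$ is looser than the definition of ``good for equality'' warrants, but the intended route (good implies saturating $T^*_{feq}$, which by Theorem \ref{formula-corr}(5) is equivalent to good for equality) is sound and is what the paper uses.

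The genuine problem is your proof of $(4)\rightarrow(1)$, which the paper simply defers to \cite{mm-sh-v2}. The rank-function construction is fine as far as it goes, but it produces exactly one function $h_\alpha$ per member of the regularizing family, hence only $\lambda$ pairwise $\de$-inequivalent elements of $\prod_t n_t/\de$, i.e.\ $\mu(\de)\geq\lambda$. That is strictly weaker than property $(1)$: by Theorem \ref{mu-theorem} there exist regular ultrafilters on $\lambda$ with $\mu(\de)=\lambda$ exactly (whenever $\lambda=\lambda^{\aleph_0}$), so the step from $\lambda$ to $\lambda^+$ is where the content lies. The proposed repair --- rerunning the argument below a slower-growing $f'\leq f$ --- cannot close this gap, because a $\lambda$-regularizing family has exactly $\lambda$ members no matter which nonstandard integer it sits below, so the construction can never output more than $\lambda$ functions. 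The standard fix is a power-set coding: set $f'(t)=\max\{m: 2^m\leq n_t\}$, which is still $\de$-nonstandard; take a regularizing family $\{X_\alpha:\alpha<\lambda\}$ below $f'$; and for each $S\subseteq\lambda$ let $h_S(t)$ injectively code the finite set $S\cap\{\alpha: t\in X_\alpha\}$, which is possible inside $\{0,\dots,n_t-1\}$ since there are at most $2^{f'(t)}\leq n_t$ candidate subsets. For $S\neq S'$ and $\alpha\in S\triangle S'$ the codes differ on all of $X_\alpha\in\de$, giving $2^\lambda\geq\lambda^+$ distinct classes in $\prod_t n_t/\de$, as required.
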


\begin{proof}
$(6) \rightarrow (x)$
Since good ultrafilters saturate any countable theory and properties (1)-(5) are all detected by formulas via Theorem \ref{formula-corr},
property (6) implies all the others. 

$(5) \rightarrow (3)$ By Theorem \ref{theory-corr} and the fact that the random graph is minimum among unstable theories. 

$(3) \rightarrow (2)$ By Theorem \ref{formula-corr} lines (2)-(3), i.e. Shelah \cite{Sh:c}.VI.4.8.

$(2) \rightarrow (1)$ Clearly the failure of (1) implies the failure of (2). 

$(1) \not\rightarrow (2)$ Shelah \cite{Sh:c}.VI.5, see Theorem \ref{two}, \S \ref{s:ex2} above.

$(2) \not\rightarrow (3)$ Consistently (assuming an $\aleph_1$-complete ultrafilter) by Theorem \ref{flexible-not-good-2} below. 

$(5) \rightarrow (4)$ Malliaris \cite{mm4} \S 6. 

$(4) \rightarrow (1)$ Proved in \cite{mm-sh-v2}. 

$(3) \not\rightarrow (4)$ Proved in a paper of the authors on excellent ultrafilters \cite{mm-sh3}.

$(4) \not\rightarrow (3)$ Consistently (assuming an $\aleph_1$-complete ultrafilter) by Theorem \ref{flexible-not-good-2} below.

$(4) \not \rightarrow (5), (6)$ We give several proofs of independent interest (each assuming the existence of an $\aleph_1$-complete ultrafilter): 
Theorem \ref{flexible-not-good-2} proves $(4) \not \rightarrow (3)$ thus a fortiori $(4) \not \rightarrow (5)$,
and in \cite{mm-sh-v2}, we give a different proof that $(4) \not \rightarrow (6)$. See also Conclusion \ref{flex-not-good-tp2}.
\end{proof}

At this point, the constructions begin. We remind the reader of the table of contents on p. \pageref{toc}, 
the definitions in \S \ref{s:background}, and the overview of results in \S\ref{s:results}. 

\label{proofs}
\section{$M^\lambda/\de$ is not $(2^\lambda)^+$-saturated for $Th(M)$ unstable} \label{s:trg}

In this section and the next we prove that flexibility does not imply saturation of the random graph,
and thus a fortiori that flexibility does not imply goodness for equality. This gives a proof (assuming the existence
of an $\aleph_1$-compact ultrafilter) that flexible need not mean good.

\begin{fact} \emph{(\cite{Sh:c} Conclusion 1.13 p. 332)} \label{mu-upper-bound}
If $\kappa$ is an infinite cardinal and $\de$ is a regular ultrafilter on $I$ then 
$\kappa^I/\de = \kappa^{|I|}$.
\end{fact}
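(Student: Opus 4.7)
The plan is to prove the stated equality of cardinalities by separately establishing the two inequalities, the upper bound being essentially trivial and the lower bound being where regularity enters.

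For the upper bound, note that each $\de$-equivalence class in $\kappa^I/\de$ is represented by at least one function in $\kappa^I$, so $|\kappa^I/\de| \leq |\kappa^I| = \kappa^{|I|}$. This step requires nothing about $\de$ beyond being a filter.

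For the lower bound, the idea is to exhibit an injection $\kappa^{|I|} \hookrightarrow \kappa^I/\de$ by using regularity to recover arbitrary functions $g : |I| \to \kappa$ as elements of the ultrapower. Set $\lambda = |I|$ and fix a $\lambda$-regularizing family $\langle X_\alpha : \alpha < \lambda \rangle \subseteq \de$ as in Definition \ref{regular}, so that for each $t \in I$ the set $u_t = \{ \alpha < \lambda : t \in X_\alpha \}$ is finite. Fix a bijection $\kappa^{<\omega} \to \kappa$ (possible since $\kappa$ is infinite) and, for each $g \in \kappa^\lambda$, define $f_g \in \kappa^I$ by letting $f_g(t)$ code, via that bijection, the finite tuple obtained by enumerating $u_t$ in increasing order $\alpha_0 < \cdots < \alpha_{n-1}$ and recording $\langle g(\alpha_0), \ldots, g(\alpha_{n-1}) \rangle$.

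To see the assignment $g \mapsto [f_g]_\de$ is injective, suppose $g \neq h$ and pick $\alpha < \lambda$ with $g(\alpha) \neq h(\alpha)$. For every $t \in X_\alpha$ the index $\alpha$ appears at the same position in the increasing enumeration of $u_t$ regardless of whether we use $g$ or $h$, so the finite tuples coded by $f_g(t)$ and $f_h(t)$ differ in that position, giving $f_g(t) \neq f_h(t)$. Since $X_\alpha \in \de$, we conclude $[f_g]_\de \neq [h_g]_\de$, and hence $|\kappa^I/\de| \geq \kappa^\lambda$. Combined with the upper bound, this yields the claim. The only subtlety, and thus the only place to be careful, is the coding step: one must ensure that positions in the enumeration of $u_t$ are stable in $g$ and $h$, which is immediate since $u_t$ does not depend on the function being coded.
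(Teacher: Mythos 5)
The paper states this only as a cited Fact (from \cite{Sh:c}) and gives no proof, so there is nothing internal to compare against; your argument is the standard one for the cardinality of regular ultrapowers and it is correct. The coding of $g\restriction u_t$ as a single element of $\kappa$ via a bijection $\kappa^{<\omega}\to\kappa$, together with the observation that $u_t$ (hence the position of $\alpha$ in its increasing enumeration) is independent of $g$, is exactly the right mechanism; the only blemish is the typo $[h_g]_\de$ for $[f_h]_\de$ in the final step.
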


\begin{claim} \label{random-graph-saturation}
Assume $\lambda \geq \kappa \geq \aleph_0$, $T=T_{rg}$, $M$ a $\lambda^+$-saturated model of $T$, $E$ a uniform ultrafilter
on $\kappa$ such that $|\kappa^\kappa/E| = 2^\kappa$ 
\lp i.e. we can find a sequence $\langle g_\alpha : \alpha < 2^\kappa \rangle$ of members of ${^\kappa \kappa}$
such that $\alpha < \beta \implies g_\alpha \neq g_\beta \mod E$\rp. Then $M^\kappa/E$ is not $(2^{\kappa})^+$-saturated.
\end{claim}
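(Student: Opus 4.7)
The plan is to exhibit, inside $N := M^\kappa/E$, a sequence $\langle b_\alpha : \alpha < 2^\kappa \rangle$ of pairwise distinct parameters and then a consistent $1$-type over them of size $2^\kappa$ that is omitted via a counting/diagonalization argument. Finite satisfiability of the type will be immediate from the random graph axioms once distinctness of the $b_\alpha$'s is checked; the real content is to bound the number of possible edge-patterns on $\{b_\alpha : \alpha < 2^\kappa\}$ that can be realized by elements of $N$ by $2^\kappa$, while there are $2^{2^\kappa}$ formally possible patterns.

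Concretely, since $\lambda \geq \kappa$, $M$ is at least $\kappa^+$-saturated, so fix an injection $\xi \mapsto a_\xi$ of $\kappa$ into $M$. For each $\alpha < 2^\kappa$ set $b_\alpha := \langle a_{g_\alpha(\epsilon)} : \epsilon < \kappa \rangle/E \in N$, where $\langle g_\alpha : \alpha < 2^\kappa \rangle$ is the hypothesized sequence of pairwise $E$-inequivalent members of $\kappa^\kappa$. Because $g_\alpha \neq g_\beta \mod E$ for $\alpha \neq \beta$ and the $a_\xi$'s are distinct in $M$, \los theorem gives $b_\alpha \neq b_\beta$ in $N$. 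Hence for any $t : 2^\kappa \to \{0,1\}$, the partial type $p_t(x) := \{ R(x, b_\alpha)^{t(\alpha)} : \alpha < 2^\kappa \}$ is finitely consistent: any finite conjunction reduces, on an $E$-large set of indices, to a consistent edge-pattern over finitely many distinct vertices in $M$, realized by the random graph axioms.

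For the diagonalization, let any candidate realization $c \in N$ be represented by $c : \kappa \to M$, and set $X^c_\epsilon := \{ \xi < \kappa : M \models R(c(\epsilon), a_\xi) \} \subseteq \kappa$ for each $\epsilon < \kappa$. Then \los theorem yields $N \models R(c, b_\alpha)$ iff $\{ \epsilon < \kappa : g_\alpha(\epsilon) \in X^c_\epsilon \} \in E$, so the \emph{trace} $\alpha \mapsto R(c, b_\alpha)$ of $c$ on $\{b_\alpha\}$ is a function of the sequence $\langle X^c_\epsilon : \epsilon < \kappa \rangle \in (\mathcal{P}(\kappa))^\kappa$ alone. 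The set of all such traces therefore has cardinality at most $|(\mathcal{P}(\kappa))^\kappa| = (2^\kappa)^\kappa = 2^\kappa$, whereas the set of candidate patterns $t : 2^\kappa \to \{0,1\}$ has cardinality $2^{2^\kappa} > 2^\kappa$. Pick any $t$ whose pattern is not realized as a trace; then $p_t(x)$ is finitely satisfiable in $N$ but omitted, witnessing that $N$ is not $(2^\kappa)^+$-saturated.

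The argument has no serious obstacle, only the bookkeeping challenge of keeping track of two distinct roles played by $\kappa$-indexed data: the index set $\kappa$ of the ultrapower (on which $c$ and the $g_\alpha$'s are defined) versus the $\kappa$ parametrizing the chosen set $\{a_\xi : \xi < \kappa\} \subseteq M$. One mild point to verify is that the map from $c \in N$ to its trace really does factor through $\langle X^c_\epsilon \rangle$ modulo equivalence a.e.\ $E$, but this is immediate from \los theorem. Notably, the proof is uniform in whether $E$ is regular or $\kappa$-complete; it uses only the cardinal hypothesis $|\kappa^\kappa/E| = 2^\kappa$ together with the saturation of $M$ and the universality of $T_{rg}$.
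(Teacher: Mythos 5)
Your proof is correct and is essentially the paper's argument: both rest on the observation that the $R$-trace of any element $c \in N$ on the parameters $b_\alpha = \hat g_\alpha/E$ is determined by data admitting only $2^\kappa$ possibilities (your sequence $\langle X^c_\epsilon : \epsilon < \kappa\rangle$ is exactly the paper's function $f : \kappa \times \kappa \to \{0,1\}$, $f(i,j)=1 \iff M \models R(h(i),a_j)$). The only difference is presentational — the paper enumerates all such $f$ in advance and diagonalizes to produce one explicit omitted type, whereas you compare cardinalities ($2^\kappa$ realizable traces versus $2^{2^\kappa}$ candidate patterns) and pick an unrealized pattern nonconstructively; both are fine.
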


\begin{proof}
Let $\eff = \{ f ~: ~ f: \kappa \times \kappa \rightarrow \{ 0, 1\} \}$, so $|\eff| = 2^\kappa$, and let $\langle f_\alpha : \alpha < 2^\kappa \rangle$
list $\eff$. Let $\langle g_\alpha : \alpha < 2^\kappa \rangle$ be the distinct sequence given by hypothesis. First, for each $\alpha < 2^\kappa$,
we define $\trv_\alpha \in \{0,1\}$ by:
\[ \trv_\alpha = 1 \iff \{ i : f_\alpha(i, g_\alpha(i)) = 1 \} \notin E     \]
Second, since $|M| \geq \kappa$, we may fix some distinguished sequence $\langle a_i : i <\kappa \rangle$ of elements of $M$.
Let $\hat{g}_\alpha \in {^\kappa M}$ be give by $\hat{g}_\alpha(i) = a_{g_\alpha(i)}$. Together these give a set
\[ p(x) = \{ (xR{\hat{g}_\alpha/E})^{\operatorname{if} \trv_\alpha} ~: ~\alpha < 2^{\kappa} \} \]
We check that $p(x)$ is a consistent partial type in $M^\kappa/E$. Since each $\hat{g}_\alpha/E \in M^\kappa/E$, $p$ is a set of formulas
in $M^\kappa/E$. Since $\alpha < \beta \implies g_\alpha/E \neq g_\beta/E \implies \hat{g}_\alpha/E \neq \hat{g}_\beta/E$, the parameters
are distinct and so the type is consistent (note that for the given sequence of parameters, any choice of exponent sequence $\langle \trv_\alpha : \alpha < 2^\kappa \rangle$ would produce a consistent partial type). Moreover, $|p| = 2^\kappa$ again by the choice of $\langle g_\alpha : \alpha < 2^\kappa \rangle$.

We now show that $p(x)$ is omitted in $M^\kappa/E$. Towards a contradiction, suppose that $h \in {^\kappa M}$ were such that $h/E$ realized $p$. 
Let $f: \kappa \times \kappa \rightarrow \{ 0, 1 \}$ be defined by $f(i,j) = 1 \iff M \models h(i) R a_j$. Then $f \in \eff$, hence for some
$\alpha_* < 2^\kappa$ we have that $f_{\alpha_*} = f$. Thus:
\[ \begin{array}{llll}
\trv_{\alpha_*} = 1 &
\oni & M^\kappa/E \models (h/E) R (\hat{g}_{\alpha_*}/E) & \mbox{(by choice of $p$, since $h/E \models p$)} \\ 
& \oni & \{ i < \kappa : M\models h(i) R {a_{g_{\alpha_*}(i)}} \} \in E & \mbox{(by \lost theorem)} \\
& \oni & \{ i : f(i, g_{\alpha_*}(i)) = 1 \} \in E & \mbox{(by the choice of $f$)} \\
& \oni & \{ i : f_{\alpha_*}(i, g_{\alpha_*}(i)) = 1 \} \in E & \mbox{(as $f_{\alpha_*} = f$)} \\
\end{array}
\]
But by definition of the truth values $\trv$, 
\[ \trv_{\alpha_*} = 1 \iff \{ i : f_{\alpha_*}(i, g_\alpha(i)) = 1 \} \notin E     \]
This contradiction completes the proof.
\end{proof}

\begin{cor}
If $\de$ is a $\lambda$-regular ultrafilter on $\lambda$, then $M^\lambda/\de$ is not $(2^\lambda)^+$-saturated
for $M$ a model of any unstable theory. 
\end{cor}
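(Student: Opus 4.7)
The plan is to reduce to the random-graph case by invoking Claim \ref{random-graph-saturation} in the ``diagonal'' setting $\kappa = \lambda$, $E = \de$, and then to transfer from $T_{rg}$ to an arbitrary unstable theory via the minimality of the random graph in Keisler's order.

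For the first step, I would check the hypotheses of Claim \ref{random-graph-saturation} with $\kappa = \lambda$. Since $\de$ is $\lambda$-regular on $\lambda$ it is in particular uniform, and Fact \ref{mu-upper-bound} gives $|\lambda^\lambda/\de| = \lambda^\lambda = 2^\lambda$, which supplies the required sequence $\langle g_\alpha : \alpha < 2^\lambda \rangle$ of pairwise $\de$-distinct elements of $\lambda^\lambda$. The claim then produces, for any $\lambda^+$-saturated $M_0 \models T_{rg}$, an omitted type of size $2^\lambda$ in $M_0^\lambda/\de$. Since $\de$ is regular and the language countable, Theorem \ref{backandforth} removes the hypothesis of $\lambda^+$-saturation of the index model, yielding failure of $(2^\lambda)^+$-saturation of $M_1^\lambda/\de$ for \emph{every} $M_1 \models T_{rg}$.

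Next, given $T$ unstable and $M \models T$, I would invoke the minimality of $T_{rg}$ among unstable theories recorded in Theorem \ref{theory-corr}(3). Concretely, $T$ has some formula $\varphi(x;y)$ with the order property, and via a standard Ramsey/indiscernibility argument one codes, inside a sufficiently saturated ultrapower of $M$, the random-graph configurations appearing in the proof of Claim \ref{random-graph-saturation}; the $2^\lambda$-sized type omitted there then propagates to an omitted $\varphi$-type in $M^\lambda/\de$.

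The main (minor) obstacle is that Theorem \ref{theory-corr}(3) is stated at the level of $\lambda^+$-saturation, i.e.\ Keisler's order proper, whereas we need the corresponding statement at the level of $(2^\lambda)^+$-saturation. However, the coding arguments behind that minimality (via the independence-property characterization in Theorem \ref{formula-corr}(3)) are uniform in the saturation cardinal, so the extension is routine; apart from this, the proof is just combining Claim \ref{random-graph-saturation}, Fact \ref{mu-upper-bound}, and Theorem \ref{backandforth}.
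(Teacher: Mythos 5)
Your reduction to the random graph is exactly the paper's: take $\kappa=\lambda$, $E=\de$ in Claim \ref{random-graph-saturation}, note that $\lambda$-regularity gives uniformity and (via Fact \ref{mu-upper-bound}) the required sequence of $2^\lambda$ pairwise $\de$-inequivalent members of ${^\lambda\lambda}$, and use Theorem \ref{backandforth} (the paper's ``without loss of generality'') to pass to an arbitrary model of $T_{rg}$. That part is correct and matches the paper.

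The gap is in the transfer from $T_{rg}$ to an arbitrary unstable $T$. You propose to take a formula $\varphi$ with the order property and ``code the random-graph configurations'' of Claim \ref{random-graph-saturation} inside a saturated ultrapower of $M$ by a Ramsey/indiscernibility argument. That step fails: the order property alone does not let you code independence patterns. A formula with the strict order property, e.g.\ $x<y$ in $Th(\mathbb{Q},<)$, is unstable but has NIP, and no random-graph configuration is interpretable from it --- this is precisely why rows (2)/(B) and (3)/(C) of Theorem \ref{formula-corr} are distinct properties. The transfer the paper actually performs goes through Shelah's dichotomy: an unstable theory has a formula with the independence property or a formula with the strict order property. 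In the IP case the direct coding you describe does work, and is indeed uniform in the saturation cardinal. In the SOP case one must instead route through goodness: failure of $(2^\lambda)^+$-saturation for $T_{rg}$ shows $\de$ is not $(2^\lambda)^+$-good (a regular $(2^\lambda)^+$-good ultrafilter would $(2^\lambda)^+$-saturate every countable theory, in particular $T_{rg}$), and by the Keisler--Shelah characterization in Theorem \ref{formula-corr}(6), $(2^\lambda)^+$-goodness is exactly what is needed to $(2^\lambda)^+$-saturate formulas with the strict order property. Without this case split your argument does not cover, for instance, $T=Th(\mathbb{Q},<)$.
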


\begin{proof}
Without loss of generality, $M$ is $\lambda^+$-saturated so of cardinality $\geq \kappa$.
The result follows by Claim \ref{random-graph-saturation} and the fact that the random graph is minimum among the unstable theories in Keisler's order.
(That is: by the Claim and Theorem \ref{formula-corr}, $\de$ is not $(2^\kappa)^+$-good for formulas with the independence property,
thus not $(2^\kappa)^+$-good, thus also not $(2^\kappa)^+$-good for formulas with the strict order property.) 
\end{proof}

\begin{obs}
The hypothesis of Claim \ref{random-graph-saturation} is satisfied when $E$ is a regular ultrafilter on $\kappa$ and when
$E$ is a $\kappa$-complete ultrafilter on $\kappa$.
\end{obs}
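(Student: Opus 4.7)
The plan is to split into the two cases stated and show in each one that $|\kappa^\kappa/E| = 2^\kappa$ (the upper bound $|\kappa^\kappa/E| \leq |\kappa^\kappa| = 2^\kappa$ is automatic; only the lower bound needs work). Since the definition of regularity uses a $\kappa$-regularizing family in $E$, the regular case should fall immediately out of Fact \ref{mu-upper-bound}.

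For the regular case, I would simply apply Fact \ref{mu-upper-bound} with $I = \kappa$ and the cardinal taken to be $\kappa$, which gives $|\kappa^\kappa/E| = \kappa^\kappa = 2^\kappa$, so the $2^\kappa$ representatives required by the hypothesis of Claim \ref{random-graph-saturation} exist.

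For the $\kappa$-complete case, the first step is to reduce to the uniform setting: if $E$ is nonprincipal and $\kappa$-complete on $\kappa$, then since $\kappa$-completeness dualizes to the statement that a union of fewer than $\kappa$ non-$E$ sets is non-$E$, each ordinal $\alpha < \kappa$ is a union of $|\alpha|<\kappa$ singletons each outside $E$, so $\kappa \setminus \alpha \in E$; hence $E$ is uniform. Moreover the existence of such an $E$ forces $\kappa$ to be measurable, thus inaccessible, and in particular a strong limit, so that $2^{|\alpha|} < \kappa$ for each $\alpha < \kappa$. The plan is then to code distinct subsets of $\kappa$ into distinct functions: fix for each $\alpha < \kappa$ a bijection $e_\alpha : {}^\alpha 2 \to \eta_\alpha$ with $\eta_\alpha < \kappa$, and for each $f \in {}^\kappa 2$ define $g_f \in {}^\kappa \kappa$ by $g_f(\alpha) = e_\alpha(f \restriction \alpha)$. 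If $f \neq f'$ and $\gamma$ is least with $f(\gamma) \neq f'(\gamma)$, then for every $\alpha$ with $\gamma < \alpha < \kappa$ we have $f \restriction \alpha \neq f' \restriction \alpha$ and so $g_f(\alpha) \neq g_{f'}(\alpha)$; since $\{\alpha : \alpha > \gamma\} \in E$ by uniformity, $g_f \neq g_{f'} \mod E$. This produces the required sequence $\langle g_\alpha : \alpha < 2^\kappa \rangle$ of pairwise $E$-inequivalent functions.

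There is no real obstacle here: the only small point to be careful about is the reduction from $\kappa$-complete to uniform (and noting that measurability gives strong limit, so the coding lands in $\kappa$). Everything else is a direct application of Fact \ref{mu-upper-bound} on one side and an initial-segment encoding argument on the other.
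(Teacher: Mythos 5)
Your proposal is correct and matches the paper's argument: the regular case is handled by citing Fact \ref{mu-upper-bound} exactly as you do, and your initial-segment coding $g_f(\alpha) = e_\alpha(f\restriction\alpha)$ is precisely the paper's ``first proof -- counting functions,'' which picks pairwise distinct ordinals $\gamma_\eta < \kappa$ for $\eta \in {}^\alpha 2$ (using that $\kappa$ is inaccessible) and notes that one point of disagreement forces disagreement on a tail, hence $E$-almost everywhere by uniformity. The paper also offers a second, alternative proof via realizing $2^\kappa$ types over a saturated model of the random graph, but that is not needed; your explicit remark that a nonprincipal $\kappa$-complete ultrafilter on $\kappa$ is automatically uniform is a small, correct addition.
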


That is, we want to show that we can find a sequence
$\langle g_\alpha : \alpha < 2^\kappa \rangle$ of members of ${^\kappa \kappa}$ 
so that $\alpha < \beta \implies g_\alpha \neq g_\beta \mod E$.
When $E$ is regular, this follows from Fact \ref{mu-upper-bound}. We give two proofs for the complete case.

\begin{proof}[First proof -- counting functions]
Suppose then that $\kappa$ is measurable, thus inaccessible.
For each $\alpha < \kappa$, let $\Gamma_\alpha = \langle \gamma_\eta : \eta \in {^\alpha 2} \rangle$ be a sequence of pairwise distinct
ordinals $< \kappa$. For each $\eta \in {^\kappa 2}$ let $g_\eta : \kappa \rightarrow \kappa$ be given by 
$g_\eta(\alpha) = \gamma_{\eta|_\alpha}$. So $\{ g_\eta : \eta \in {^\kappa 2} \} \subseteq {^\kappa \kappa}$.
By construction, all we need is one point of difference to know the functions diverge: 
$\eta \neq \nu \in {^\kappa 2}, \eta(\beta) \neq \nu(\beta) \implies 
\{ \alpha < \kappa : g_\eta(\alpha) = g_\nu(\alpha) \} \subseteq \{ \alpha : \alpha < \beta \} = \emptyset \mod E$
as $E$ is uniform.
\end{proof}

\begin{proof}[Second proof -- realizing types]
Suppose then that $\kappa$ is measurable, thus inaccessible.
So we may choose $M$, $|M| = \kappa$ to be a $\kappa$-saturated model of the theory of the random graph. 
To show that $|M^\kappa/E| \geq 2^\kappa$,
it will suffice to show that $2^\kappa$-many distinct types over the diagonal embedding of $M$ in the ultrapower $N$ are realized. 
Let $p(x) = \{ xRf^0_\alpha \land \neg xRf^1_\alpha : \alpha < \kappa \}$ be such a type, with each $f^i_\alpha = {^\kappa\{m\}}$ for some $m \in M$
and of course $\alpha, \beta < \kappa \implies f^0_\alpha \neq f^1_\beta$.
For each $t \in \kappa$, let $p_t(x) = \{ xRf^0_\alpha(t) \land \neg xRf^1_\alpha(t) : \alpha < t \}$. Note that since the elements
$f^i_\alpha$ are constant, for each $t\in\kappa$ we have that $p_t(x)$ is a consistent partial type in $M$.
Choose a new element $h \in {^\kappa M}$ so that
$t \in \kappa$ implies $h(t)$ satisfies $p_t(x)$ in $M$. 
By the saturation of $M$, some such $h$ exists. By uniformity of $E$ $h$ realizes the type $p(x)$, that is, for $\alpha < \kappa$, 
\[ | \kappa \setminus \{ t \in \kappa :  M \models h(t) R f^0_\alpha(t) \land \neg h(t) R f^1_\alpha(t) \} | \leq \alpha < \kappa \]
As no such $h$ can realize two distinct types over $M$ in $N$, we finish.
\end{proof}

\section{For $\kappa$ measurable, $\lambda \geq 2^\kappa$ there is $\de$ on $\lambda$ flexible but not good for $T_{rg}$} \label{s:not-flex2}

We begin by characterizing when flexibility is preserved under products of ultrafilters, Definition \ref{d:prod}.
The first observation says that $\lambda$-flexibility of the first ultrafilter ensures there are $\lambda$-regularizing families in $\de$ below 
certain nonstandard integers, namely those which are a.e. $\de_1$-nonstandard. 

\begin{obs} \label{reg-sets}
Let $\lambda, \kappa \geq \aleph_0$ and let $\de_1$, $E$ be ultrafilters on $\lambda, \kappa$ respectively. Let $\de = \de_1 \times E$
be the product ultrafilter on $\lambda \times \kappa$. Suppose that we are given $n_* \in {^{\lambda \times \kappa}\mathbb{N}}$ such that:

\begin{enumerate}
\item $n \in \mathbb{N} \implies \{ (s,t) \in \lambda \times \kappa ~:~ n_*(s,t) > n \} \in \de$, i.e. $n_*$ is $\de$-nonstandard
\item $N := \{ t \in \kappa: n \in \mathbb{N} \implies \{ s \in \lambda : n_*(s,t) > n \} \in \de_1 \} \in E$
\newline i.e. $E$-almost all of its projections are $\de_1$-nonstandard
\end{enumerate}

Then $(a) \implies (b)$, where:
\begin{enumerate}
\item[(a)] $\de_1$ is $\lambda$-flexible
\item[(b)] there is a regularizing set $\langle X_i : i < \lambda \rangle \subseteq \de$ below $n_*$, 
\newline i.e. such that for all $(s,t) \in \lambda \times \kappa$, $|\{ i < \lambda : (s,t) \in X_i \}| \leq n_*(s,t)$
\end{enumerate}
\end{obs}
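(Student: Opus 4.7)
The plan is straightforward: use flexibility fiberwise on the $\kappa$-many vertical sections and then assemble the results into a single regularizing family on $\lambda \times \kappa$. The key point is that hypothesis (2) tells us exactly which fibers we can apply flexibility to, namely the fibers indexed by $t \in N$, and hypothesis (2) also guarantees that $N \in E$, which is what we need to lift fiberwise membership in $\de_1$ to membership in the product ultrafilter $\de$.

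In detail, I would proceed as follows. For each $t \in N$, the function $s \mapsto n_*(s,t)$ is $\de_1$-nonstandard by definition of $N$. Applying $\lambda$-flexibility of $\de_1$ to this function, I obtain a family $\langle X^t_i : i < \lambda \rangle \subseteq \de_1$ such that for every $s \in \lambda$,
\[ |\{i < \lambda : s \in X^t_i\}| \leq n_*(s,t). \]
For $t \in \kappa \setminus N$, set $X^t_i = \emptyset$. Now for each $i < \lambda$ define
\[ X_i = \{(s,t) \in \lambda \times \kappa : s \in X^t_i\}. \]

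To verify $X_i \in \de$, observe that for every $t \in N$ the horizontal section $\{s : (s,t) \in X_i\} = X^t_i$ lies in $\de_1$; since $N \in E$, Definition \ref{d:prod} gives $X_i \in \de_1 \times E = \de$. Finally, to verify that $\langle X_i : i < \lambda\rangle$ is regularizing below $n_*$, fix $(s,t) \in \lambda \times \kappa$. If $t \notin N$ then no $X_i$ contains $(s,t)$, so $|\{i : (s,t) \in X_i\}| = 0 \leq n_*(s,t)$. If $t \in N$ then $(s,t) \in X_i$ iff $s \in X^t_i$, so by the bound from flexibility,
\[ |\{i < \lambda : (s,t) \in X_i\}| = |\{i < \lambda : s \in X^t_i\}| \leq n_*(s,t), \]
as required.

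There is no real obstacle here; the content of the observation is essentially bookkeeping, extracting regularizing families from flexibility on each $\de_1$-nonstandard fiber and gluing them into a single family using that the set of good fibers is $E$-large. The only thing to be careful about is that the fiberwise regularizing families do not need to be coherent across different $t$: the definition of a regularizing family below $n_*$ is pointwise, and the bound at $(s,t)$ depends only on the single fiber at $t$.
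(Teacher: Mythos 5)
Your proof is correct and follows essentially the same route as the paper's: apply $\lambda$-flexibility of $\de_1$ fiberwise on each $t \in N$ to get families $\langle X^t_i : i < \lambda \rangle$ below $n_*(-,t)$, glue them into $X_i = \{(s,t) : s \in X^t_i\}$, and use $N \in E$ to get $X_i \in \de$. The explicit handling of $t \notin N$ by setting $X^t_i = \emptyset$ is a harmless bookkeeping addition.
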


\begin{proof}
For each $t \in N$, let $\langle X^t_i : i < \lambda \rangle \subseteq \de_1$ be a regularizing family below $n_*(-,t)$, that is,
such that for each $s \in \lambda$, $| \{ i < \lambda : s \in X^t_i \} | \leq n_*(s,t)$. Such a family is guaranteed by the
$\lambda$-flexibility of $\de_1$ along with the definition of $N$, since the latter ensures that $n_*(-,t) \in {^\lambda \lambda}$ 
is $\de_1$-nonstandard. Now define $\langle X_i : i < \lambda \rangle$ by $X_i = \{ (s,t) : s \in X^t_i \}$. We verify that:

\begin{itemize}
\item $\langle X_i : i <\lambda \rangle \subseteq \de$, as 
$\{ t \in \kappa : \{ s \in \lambda : (s,t) \in X_i \} \in \de_1 \} \supseteq N$ and $N \in E$ by hypothesis.
\item $\langle X_i : i < \lambda \rangle$ is below $n_*$, since for each $(s,t) \in \lambda \times \kappa$,
\\ $| \{ i : (s,t) \in X_i \}| = | \{ i : (s,t) \in X^t_i \} | \leq n_*(s,t)$ by construction. 
\end{itemize}

This completes the proof.
\end{proof}

The next claim shows that $\de$-nonstandard integers project $E$-a.e. to $\de_1$-nonstandard integers 
precisely when the second ultrafilter $E$ is at least $\aleph_1$-complete. 
 
\begin{claim} \label{initial-segment}
Let $\lambda, \kappa \geq \aleph_0$ and let $\de_1$, $E$ be uniform ultrafilters on $\lambda, \kappa$ respectively. Let $\de = \de_1 \times E$
be the product ultrafilter on $\lambda \times \kappa$.  Then the following are equivalent.

\begin{enumerate}
\item If $n_* \in {^{\lambda \times \kappa}\mathbb{N}}$ is such that 
$n \in \mathbb{N} \implies \{ (s,t) \in \lambda \times \kappa ~:~ n_*(s,t) > n \} \in \de$,   
\\ then $N \in E$ where $N := \{ t \in \kappa: n \in \mathbb{N} \implies \{ s \in \lambda : n_*(s,t) > n \} \in \de_1 \}$.
\item $E$ is $\aleph_1$-complete.
\end{enumerate}
\end{claim}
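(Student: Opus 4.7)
The proof is a direct equivalence between $\aleph_1$-completeness and a commutation-with-countable-intersections property of the projection, so I would split into the two directions and handle each briefly.

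For (2) $\Rightarrow$ (1), suppose $E$ is $\aleph_1$-complete and $n_*$ is $\de$-nonstandard. For each $n \in \mathbb{N}$, unwinding the product-ultrafilter definition (\ref{d:prod}) applied to $A_n := \{(s,t) : n_*(s,t) > n\} \in \de$, we get $B_n := \{t \in \kappa : \{s \in \lambda : n_*(s,t) > n\} \in \de_1\} \in E$. Then $N = \bigcap_{n \in \mathbb{N}} B_n$, which is a countable intersection of members of $E$, hence in $E$.

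For (1) $\Rightarrow$ (2), I would argue by contrapositive. If $E$ is not $\aleph_1$-complete, a standard reformulation gives a function $f : \kappa \to \omega$ such that $f^{-1}(n) \notin E$ for every $n < \omega$: take a decreasing sequence $\langle Y_n : n < \omega \rangle \subseteq E$ with $\bigcap Y_n \notin E$ and set $f(t)$ to be the least $n$ with $t \notin Y_n$ (and, say, $0$ if no such $n$ exists, after replacing $\bigcap Y_n$ with its complement where necessary so that the exceptional fiber is still out of $E$). Now define $n_*(s,t) := f(t)$, a function of the second coordinate only. For each standard $n$, $\{t : f(t) > n\} = \bigcap_{k \leq n}(\kappa \setminus f^{-1}(k)) \in E$ as a finite intersection of sets in $E$, so $\{(s,t) : n_*(s,t) > n\} = \lambda \times \{t : f(t) > n\}$ is in $\de$ by \ref{d:prod}; thus $n_*$ is $\de$-nonstandard. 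On the other hand, for every $t \in \kappa$ the projection $n_*(-,t)$ is the constant $f(t) < \omega$, so taking $n = f(t)$ shows $\{s : n_*(s,t) > n\} = \emptyset \notin \de_1$. Therefore $N = \emptyset \notin E$, contradicting (1).

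There is no real obstacle; the only minor point is the reduction in the contrapositive to a function $f : \kappa \to \omega$ with all fibers outside $E$, which is the usual characterization of $\aleph_1$-incompleteness of an ultrafilter in terms of countable partitions. Everything else is a direct calculation with the definition of $\de_1 \times E$.
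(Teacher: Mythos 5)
Your proposal is correct and follows essentially the same route as the paper: the forward direction is the same countable intersection of the sets $B_n$ using $\aleph_1$-completeness, and the converse is the same construction of a $\de$-nonstandard $n_*$ depending only on the $\kappa$-coordinate, built from a decreasing witness to countable incompleteness so that every fiber $n_*(-,t)$ is a standard constant and hence $N=\emptyset$. The only cosmetic difference is that you package the incompleteness witness as a partition function $f:\kappa\to\omega$ with all fibers outside $E$, while the paper works directly with the difference sets $X_n\setminus X_{n+1}$; these are the same construction.
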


\begin{proof}
(1) $\rightarrow$ (2) Suppose $E$ is not $\aleph_1$-complete, so it is countably incomplete and we can find $\langle X_n : n < \omega \rangle \subseteq E$
such that $\bigcap \{ X_n : n \in \omega \rangle \} = \emptyset \mod \de$. Without loss of generality, $n < \omega \rightarrow X_{n+1} \subsetneq X_n$. 
Let $n_* \in {^{\lambda \times \kappa} \mathbb{N}}$ be given by:
\[ t \in \kappa \land t \in X_n \setminus X_{n+1} \implies n_*(-,t) = {^\lambda\{n\}} \]
Then $n_*$ is $\de$-nonstandard but its associated set $N$ is empty (as a subset of $\kappa$, so a fortiori empty modulo $\de$). 

(2) $\rightarrow$ (1) Suppose on the other hand that $E$ is $\aleph_1$-complete, and let some $\de$-nonstandard $n_*$ be given.
For each $n \in \mathbb{N}$, define $X_n = \{ t \in \kappa : \{ s \in \lambda : n_*(s,t) > n \} \in \de_1 \}$. Then by completeness, 
$N \supseteq \bigcap\{ X_n : n \in \mathbb{N} \} \in E$. 
\end{proof}

\begin{cor} \label{reg-transfer}
Let $\lambda, \kappa \geq \aleph_0$ and let $\de_1$, $E$ be ultrafilters on $\lambda, \kappa$ respectively where $\kappa > \aleph_0$ 
is measurable. Let $\de = \de_1 \times E$ be the product ultrafilter on $\lambda \times \kappa$. Then:
\begin{enumerate}
\item If $\de_1$ is $\lambda$-flexible and $E$ is $\aleph_1$-complete, then $\de$ is $\lambda$-flexible.
\item If $\lambda \geq \kappa$ and $\lcf(\aleph_0, \de_1) \geq \lambda^+$, then $\lcf(\aleph_0, \de) \geq \lambda^+$, so in particular,
$\de = D \times E$ will $\lambda^+$-saturate any countable stable theory.
\end{enumerate}
\end{cor}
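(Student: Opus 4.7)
\emph{Proof proposal.} Both parts reduce quickly to the preliminary work already in hand: Observation \ref{reg-sets} and Claim \ref{initial-segment}.

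For (1), the plan is to take an arbitrary $\de$-nonstandard integer $n_* \in {^{\lambda \times \kappa}\mathbb{N}}$ and produce a $\lambda$-regularizing family in $\de$ below it, which is the definition of $\lambda$-flexibility. Since $E$ is $\aleph_1$-complete, the implication $(2) \Rightarrow (1)$ of Claim \ref{initial-segment} says that the set $N = \{t \in \kappa : n_*(-,t) \text{ is } \de_1\text{-nonstandard}\}$ lies in $E$. These are exactly the two hypotheses of Observation \ref{reg-sets}, and since $\de_1$ is $\lambda$-flexible, the implication $(a) \Rightarrow (b)$ of that observation yields the desired regularizing family below $n_*$.

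For (2), the strategy is to show that the diagonal embedding of the $\de_1$-nonstandards is coinitial in the $\de$-nonstandards, and then transfer the $\lcf$ bound. Let $\langle m^\alpha_* : \alpha < \mu \rangle$ be a decreasing sequence of $\de$-nonstandards with $\mu \leq \lambda$. By $\aleph_1$-completeness of $E$ and Claim \ref{initial-segment}, for each $\alpha$ the set $N_\alpha = \{t \in \kappa : m^\alpha_*(-,t) \text{ is } \de_1\text{-nonstandard}\}$ lies in $E$. The collection $C = \{m^\alpha_*(-,t) : \alpha < \mu,\ t \in N_\alpha\}$ consists of $\de_1$-nonstandards and has cardinality at most $\mu \cdot \kappa \leq \lambda$, using the hypothesis $\kappa \leq \lambda$. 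Since $\lcf(\aleph_0, \de_1) \geq \lambda^+$, the set $C$ is not coinitial in the $\de_1$-nonstandards, so there is a $\de_1$-nonstandard $n_* : \lambda \to \mathbb{N}$ with $n_* < m$ modulo $\de_1$ for every $m \in C$. Unwinding the product structure: for each $\alpha$, the set $\{t : \{s : n_*(s) < m^\alpha_*(s,t)\} \in \de_1\}$ contains $N_\alpha$ and thus lies in $E$, so the diagonal embedding of $n_*$ satisfies $n_* < m^\alpha_* \mod \de$. Hence no decreasing $\mu$-sequence with $\mu \leq \lambda$ is coinitial in the $\de$-nonstandards, yielding $\lcf(\aleph_0, \de) \geq \lambda^+$. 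The final clause then follows from Theorem \ref{implications} together with Theorem \ref{two}: $\lcf(\aleph_0, \de) \geq \lambda^+$ forces $\mu(\de) \geq \lambda^+$, which by Theorem \ref{two} is precisely what is needed to $\lambda^+$-saturate any countable stable theory.

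The only delicacy I anticipate is the cardinal arithmetic in (2): one must be able to pick a single $\de_1$-nonstandard $n_*$ lying below the entire family $C$ at once, which requires $|C| \leq \lambda$, and this is exactly why $\lambda \geq \kappa$ is indispensable. Beyond this, both parts are formal consequences of Claim \ref{initial-segment} combined with the given flexibility and $\lcf$ hypotheses on $\de_1$, with no further combinatorial content required.
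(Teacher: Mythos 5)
Both parts are correct and follow the paper's own route: part (1) is exactly the paper's proof (Claim \ref{initial-segment} combined with Observation \ref{reg-sets}), and part (2) uses the same key step of projecting a $\de$-nonstandard to $E$-almost-everywhere $\de_1$-nonstandard values and then exploiting $\lcf(\aleph_0,\de_1)\geq\lambda^+$ together with $\kappa\leq\lambda$ to find a diagonally embedded $\de_1$-nonstandard below. The only cosmetic difference is that in (2) you argue that no decreasing sequence of length $\leq\lambda$ is coinitial, whereas the paper shows the diagonal image of the $\de_1$-nonstandards is itself coinitial in the $\de$-nonstandards; these are equivalent formulations of the same argument.
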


\begin{proof}
(1) By Claim \ref{initial-segment} and Observation \ref{reg-sets}.

(2) Let us show that the $\de_1$-nonstandard integers are cofinal in the $\de$-nonstandard integers. 
Let $M = (\mathbb{N}, <)^\lambda/\de_1$, $N = M^\kappa/E$. Let $n_* \in N$ be $\de$-nonstandard.
By Claim \ref{initial-segment}, the set
$X = \{ t \in \kappa : n_*(t)~\mbox{is a $\de_1$-nonstandard element of $M$} \} \in E$. 
Since $\lcf(\aleph_0, \de_1) \geq \lambda^+ > \kappa$, there is $m_* \in M$ which is $\de_1$-nonstandard and
such that $t \in X \implies M \models n_*(t) > m_*$. Then the diagonal embedding of $m_*$ 
in $N$ will be $\de$-nonstandard but below $n_*$, as desired. 
The statement about stable theories follows by \S \ref{s:cont-imp}, Theorem \ref{theory-corr} and Theorem \ref{implications}(2)$\rightarrow$(1).
\end{proof}

\begin{theorem} \label{flexible-not-good-2}
Assume $\aleph_0 < \kappa < \lambda = \lambda^\kappa$, $2^\kappa \leq \lambda$, $\kappa$ measurable. 
Then there exists a regular uniform ultrafilter $\de$ on $\lambda$ such that $\de$ is $\lambda$-flexible, yet
for any model $M$ of the theory of the random graph, $M^\lambda/\de$ is not $(2^\kappa)^+$-saturated. However,
$\de$ will $\lambda^+$-saturate any countable stable theory.

A fortiori, $\de$ is neither good nor good for equality, and it will fail to $(2^\kappa)^+$-saturate any unstable theory.
\end{theorem}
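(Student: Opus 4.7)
The plan is to build $\de$ as a product $\de_1 \times E$ and then transport it to $\lambda$ using a bijection $\lambda \times \kappa \to \lambda$, which exists because $\lambda = \lambda^\kappa \geq \lambda \cdot \kappa$. First I would fix $\de_1$ to be a regular, good ultrafilter on $\lambda$ (these exist by Keisler's classical construction under $\lambda = \lambda^\kappa$, a fortiori under $2^\lambda = 2^\lambda$); goodness will supply both $\lambda$-flexibility and $\lcf(\aleph_0, \de_1) \geq \lambda^+$. Then, using the hypothesis that $\kappa$ is measurable, I would fix $E$ a $\kappa$-complete uniform ultrafilter on $\kappa$; in particular $E$ is $\aleph_1$-complete, which is the hypothesis Corollary~\ref{reg-transfer} needs. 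Set $\de := \de_1 \times E$ on $\lambda \times \kappa$.

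Next I would run through the positive properties. Regularity of $\de$ is immediate: given a $\lambda$-regularizing family $\langle X_\alpha : \alpha < \lambda \rangle \subseteq \de_1$, the sets $Y_\alpha := X_\alpha \times \kappa$ witness $\lambda$-regularity of $\de$, since a countable intersection of the $Y_\alpha$'s is $(\bigcap X_{\alpha_n}) \times \kappa = \emptyset$. Flexibility of $\de$ is exactly Corollary~\ref{reg-transfer}(1), applied to $\de_1$ flexible and $E$ $\aleph_1$-complete. The lower cofinality bound $\lcf(\aleph_0, \de) \geq \lambda^+$ is Corollary~\ref{reg-transfer}(2), and by Theorems~\ref{formula-corr} and~\ref{theory-corr} (row~(2)) this already gives $\lambda^+$-saturation of every countable stable theory. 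Transporting $\de$ to $\lambda$ through any bijection $\lambda \times \kappa \cong \lambda$ preserves all of these properties, as they depend only on the isomorphism class of the Boolean algebra $\mathcal{P}(I)/\de$ together with the size of the index set.

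The failure of $(2^\kappa)^+$-saturation for the random graph is where the two-step structure pays off. For any model $M$ of $T_{rg}$ there is a canonical isomorphism
\[ M^{\lambda \times \kappa}/\de \;\cong\; \bigl( M^\lambda/\de_1 \bigr)^\kappa \big/ E. \]
Let $N_1 := M^\lambda/\de_1$; since $\de_1$ is regular on $\lambda \geq \kappa$ we have $|N_1| \geq \lambda \geq \kappa$, and since $\de_1$ is good $N_1$ is in particular $\kappa^+$-saturated, so $N_1$ fulfils the ``sufficiently saturated model of $T_{rg}$'' hypothesis of Claim~\ref{random-graph-saturation}. The other hypothesis of that claim, $|\kappa^\kappa/E| = 2^\kappa$, is precisely the observation immediately following Claim~\ref{random-graph-saturation} for $\kappa$-complete $E$ (the ``counting functions'' argument via a binary tree of pairwise-distinct ordinals). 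Hence $N_1^\kappa/E$, and so $M^\lambda/\de$, is not $(2^\kappa)^+$-saturated.

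The final ``a fortiori'' sentence then requires only referring back to Section~\ref{s:cont-imp}: failing $(2^\kappa)^+$-saturation of $T_{rg}$ precludes goodness (Theorem~\ref{formula-corr}, row~(6) implies row~(3)) and goodness for equality (row~(5) implies row~(3)), and since $T_{rg}$ is minimum in Keisler's order among unstable theories (Theorem~\ref{theory-corr}(3)), $\de$ fails to $(2^\kappa)^+$-saturate every unstable theory. I expect the only technically delicate step to be the verification that Corollary~\ref{reg-transfer} actually applies as stated, i.e.\ that $\aleph_1$-completeness of $E$ together with largeness of $\lcf(\aleph_0, \de_1)$ both lifts flexibility and keeps the $\de_1$-nonstandard integers cofinal in the $\de$-nonstandard integers; everything else is bookkeeping around the product decomposition.
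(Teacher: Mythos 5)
Your proposal is correct and follows essentially the same route as the paper: take $\de = \de_1 \times E$ with $\de_1$ a regular $\lambda^+$-good (hence $\lambda$-flexible) ultrafilter on $\lambda$ and $E$ a $\kappa$-complete uniform ultrafilter on the measurable $\kappa$, invoke Corollary~\ref{reg-transfer} for flexibility and stable saturation, and Claim~\ref{random-graph-saturation} for the failure of $(2^\kappa)^+$-saturation of $T_{rg}$. Your added care about the iterated-ultrapower isomorphism $M^{\lambda\times\kappa}/\de \cong (M^\lambda/\de_1)^\kappa/E$ and the transport through a bijection $\lambda\times\kappa\cong\lambda$ only makes explicit what the paper leaves implicit.
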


\begin{proof}
Let $\ee$ be a uniform $\aleph_1$-complete ultrafilter on $\kappa$. 
Let $\de_1$ be any $\lambda$-flexible (thus, $\lambda$-regular) ultrafilter on $\lambda$, e.g. 
a regular $\lambda^+$-good ultrafilter on $\lambda$. Let $\de = \de_1 \times \ee$ be the product ultrafilter
on $I = \lambda \times \kappa$. Then $|I| = \lambda$, and we have that $\de$ is $\lambda$-flexible by 
Corollary \ref{reg-transfer}(1), it saturates countable stable theories by \ref{reg-transfer}(2),
and it fails to $(2^\kappa)^+$-saturate the random graph by Claim \ref{random-graph-saturation}. 

The last line of the Theorem follows from the fact that the random graph is minimum among unstable theories in Keisler's order,
along with the fact that both goodness and goodness for equality are necessary conditions on regular ultrafilters for saturating
some unstable theory (the theory of dense linear orders and the theory $T^*_{feq}$ of infinitely many independent equivalence relations,
respectively).  
\end{proof}

\begin{cor} \label{lcf-rg}
In the construction just given, by Claim \ref{initial-segment}, $\lcf(\aleph_0, \de) = \lcf(\aleph_0, \de_1) \geq \lambda^+$ since $\de_1$
is $\lambda^+$-good and the nonstandard integers of $\de_1$ are cofinal in the nonstandard integers of $\de$. 
Thus consistently, a regular ultrafilter on $\lambda > \kappa$ may have large lower cofinality of $\aleph_0$ while
failing to $(2^\kappa)^+$-saturate the random graph. 
\end{cor}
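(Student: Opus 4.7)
The plan is to observe that this corollary is essentially an immediate consequence of the construction in Theorem \ref{flexible-not-good-2} together with Claim \ref{initial-segment} (or equivalently with Corollary \ref{reg-transfer}(2)), so the proof amounts to assembling the pieces. There are really two things to check: the cofinality computation $\lcf(\aleph_0, \de) = \lcf(\aleph_0, \de_1) \geq \lambda^+$, and the joint conclusion that this coexists with failure of $(2^\kappa)^+$-saturation for the random graph. The second is already delivered by Theorem \ref{flexible-not-good-2}, so the content is in the first.

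For the cofinality bound, I would first recall the standard fact that a regular $\lambda^+$-good ultrafilter on $\lambda$ has $\lcf(\aleph_0) \geq \lambda^+$: $\lambda^+$-goodness gives $\lambda^+$-saturation of all countable theories, and applied to $(\mathbb{N}, <)$ this forces the coinitiality of the nonstandard cut to be at least $\lambda^+$. Since in the construction $\de_1$ was chosen to be a regular $\lambda^+$-good ultrafilter on $\lambda$, we get $\lcf(\aleph_0, \de_1) \geq \lambda^+$. Next, to transfer this to $\de = \de_1 \times \ee$, I would invoke Corollary \ref{reg-transfer}(2): because $\ee$ is $\aleph_1$-complete, Claim \ref{initial-segment} guarantees that any $\de$-nonstandard integer $n_*$ projects $\ee$-almost everywhere to $\de_1$-nonstandard integers, and by the large lower cofinality of $\de_1$ one can find a $\de_1$-nonstandard $m_*$ below $n_*$ on this set. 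The diagonal embedding of $m_*$ is a $\de$-nonstandard element strictly below $n_*$, which shows simultaneously that the diagonal image of the $\de_1$-nonstandard integers is cofinal (with the reverse order) in the $\de$-nonstandard integers and that $\lcf(\aleph_0, \de) \geq \lambda^+$. This cofinality of the embedding is precisely what upgrades the inequality to the equality $\lcf(\aleph_0, \de) = \lcf(\aleph_0, \de_1)$.

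The consistency statement in the final sentence is then immediate: under the hypotheses of Theorem \ref{flexible-not-good-2} (namely $\aleph_0 < \kappa < \lambda = \lambda^\kappa$ with $\kappa$ measurable), the ultrafilter $\de = \de_1 \times \ee$ built there is a regular ultrafilter on $\lambda$ which by the theorem fails to $(2^\kappa)^+$-saturate the random graph, and which by the preceding paragraph has $\lcf(\aleph_0, \de) \geq \lambda^+$. There is no genuine obstacle in this argument; the only point that requires a little care is that the transfer from $\de_1$ to $\de$ yields a true equality of $\lcf(\aleph_0)$ rather than only a lower bound, and this is exactly what the cofinality half of Claim \ref{initial-segment} supplies via the diagonal embedding.
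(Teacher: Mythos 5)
Your proposal is correct and follows the paper's own reasoning exactly: the lower bound $\lcf(\aleph_0,\de_1)\geq\lambda^+$ comes from $\lambda^+$-goodness of $\de_1$, and Claim \ref{initial-segment} (via Corollary \ref{reg-transfer}(2)) shows the diagonal image of the $\de_1$-nonstandard integers is coinitial in the $\de$-nonstandard integers, giving the equality of lower cofinalities, while the failure of $(2^\kappa)^+$-saturation is exactly Theorem \ref{flexible-not-good-2}. No issues.
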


By \cite{Sh:c}.VI.4, a necessary condition for a regular ultrafilter $\de$ on $\lambda$ 
to saturate some unstable theory is that $\lcf(\aleph_0, \de) \geq \lambda^+$; Corollary \ref{lcf-rg} shows it is not sufficient. 

\section{$M^\lambda/\de$ is not $\lambda^{++}$-saturated for $\lambda$ regular and $Th(M)$ non-simple} \label{s:non-simple}

In this section we prove that there is a loss of saturation in ultrapowers of non-simple theories. As mentioned above, this is a new proof 
of a result from \cite{Sh:c}.VI.4.7, which reflects an interest (visible elsewhere in this paper e.g. \ref{kappa-plus-cut}) in controlling the distribution of sets of indices. 

\begin{defn}
A first-order theory has $TP_2$ if there is a formula $\vp(x;\overline{y})$ which does, where this means that in any $\aleph_1$-saturated model $M \models T$
there exists an array $A = \{ \overline{a}^i_j : i <\omega, j<\omega \}$ of tuples, $\ell(\overline{a}^i_j) = \ell(\overline{y})$ such that: 
for any finite $X \subseteq \omega \times \omega$, the partial type
\[ \{ \vp(x;\overline{a}^i_j): (i,j) \in X \} \] 
is consistent if and only if
\[ (i,j) \in X \land (i^\prime, j^\prime) \in X \land i=i^\prime \implies j=j^\prime \]
Informally speaking, by choosing no more than one tuple of parameters
from each column we form a consistent partial type. 
\end{defn}

\begin{defn} \label{sop2-tree}
A first-order theory has $TP_1$, or equivalently $SOP_2$, if there is a formula $\vp(x;\overline{y})$ which does, where this means that in any $\aleph_1$-saturated model $M \models T$
there exist $\langle \overline{a}_\eta : \eta \in {^{\aleph_0 > } 2 } \rangle$ such that: 

\begin{enumerate}
\item for $\eta, \rho \in {^{\aleph_0 > } 2}$ incomparable, i.e. $\neg (\eta \tlf \rho) \land \neg (\rho \tlf \eta)$, we have that
$\{ \vp(x;\overline{a}_\eta), \vp(x;\overline{a}_\rho) \}$ is inconsistent.
\item for $\eta \in {^{\aleph_0 } 2}$, $\{ \vp(x;\overline{a}_{\eta|_i}) : i < \aleph_0 \}$ is a consistent partial $\vp$-type.
\end{enumerate}
\end{defn}

\begin{fact} \label{tp1-tp2} \emph{(See \cite{Sh:c}.III.7, \cite{Sh93})}
If $T$ is not simple then $T$ contains either a formula with $TP_1$ (equivalently $SOP_2$) or a formula with $TP_2$.
\end{fact}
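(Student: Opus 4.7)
The plan is to start from the assumption that $T$ is not simple, so that some formula $\vp(x;\bar y)$ has the tree property (TP), witnessed in a sufficiently saturated $M\models T$ by parameters $\{\bar a_\eta : \eta \in \omega^{<\omega}\}$ and an integer $k<\omega$ such that along every branch $\eta\in\omega^\omega$ the set $\{\vp(x;\bar a_{\eta|n}):n<\omega\}$ is consistent, while for every node $\eta\in\omega^{<\omega}$ the set $\{\vp(x;\bar a_{\eta^\frown\langle i\rangle}):i<\omega\}$ is $k$-inconsistent. Choose $k$ least possible, so that every $k-1$ siblings are jointly consistent. My first step would be to apply a tree-version of the Ramsey / modeling theorem to pass to a strongly tree-indiscernible subtree — preserving both branch-consistency and minimal sibling $k$-inconsistency — so that henceforth the consistency status of any finite configuration of nodes depends only on its quantifier-free tree type.

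The heart of the argument is then a dichotomy on $k$. If $k=2$, so that any two distinct siblings already yield inconsistent instances of $\vp$, then strong indiscernibility lifts this to any two incomparable nodes $\eta,\rho$: one translates their greatest common predecessor to the root and the first coordinates on which they differ to distinct immediate children, obtaining inconsistency of $\{\vp(x;\bar a_\eta),\vp(x;\bar a_\rho)\}$. Combined with consistency along branches and the embedding $2^{<\omega}\hookrightarrow\omega^{<\omega}$, this yields exactly the configuration of Definition \ref{sop2-tree}, so $\vp$ has $TP_1$ (equivalently $SOP_2$).

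The remaining case is $k\geq 3$: any two siblings are jointly consistent but any $k$ of them are not. Here I would build an $\omega\times\omega$ array $\bar a^i_j$ witnessing $TP_2$ for a formula derived from $\vp$. Enumerate $\omega$-many immediate subtrees of the root as columns; in each subtree use indiscernibility and branch-consistency to extract a long indiscernible sibling-sequence, so that any transversal picks consistent parameters. To convert the block $k$-inconsistency of siblings into the $2$-inconsistency required by $TP_2$ within rows, replace $\vp(x;\bar y)$ by $\psi(x;\bar y_1,\ldots,\bar y_{k-1}):=\bigwedge_{\ell<k-1}\vp(x;\bar y_\ell)$ and repackage each array entry as a disjoint $(k-1)$-tuple of siblings; two such new entries in the same row then share $2(k-1)\geq k$ siblings in a $k$-inconsistent block, so $\psi$-instances on them are inconsistent, while cross-column transversals remain consistent by branch-consistency and indiscernibility.

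The main obstacle will be this second case: arranging the repackaging so that (i) every finite transversal of the resulting array is genuinely consistent for the replacement formula $\psi$, (ii) any two array entries in the same row are truly $2$-inconsistent for $\psi$ rather than only participating in a larger $k$-inconsistent block, and (iii) the whole pattern is witnessed by a single first-order formula. This is the delicate technical core of Shelah's $TP \Rightarrow TP_1\vee TP_2$ dichotomy; executing it cleanly requires the full strength of strong tree-indiscernibility — both to transfer sibling inconsistency inside each row and to preserve transversal consistency across columns — along with possibly a further Ramsey reduction on siblings inside each column before the $(k-1)$-tuple repackaging is carried out.
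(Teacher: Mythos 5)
The paper does not actually prove this Fact --- it is quoted from \cite{Sh:c}.III.7 and \cite{Sh93} --- so your sketch has to be judged on its own terms. Your first case is essentially the right (and standard) half of the argument: after replacing the witnessing tree by a strongly indiscernible one, pairwise inconsistency of siblings propagates to all incomparable pairs, because an incomparable pair has the same quantifier-free tree type (in the language with $\trianglelefteq$, $<_{\mathrm{lex}}$ and the meet function, without the length function) as a sibling pair, and restricting to $2^{<\omega}$ gives exactly Definition \ref{sop2-tree}. (A minor repair: minimality of $k$ only gives you \emph{one} consistent $(k-1)$-subset of \emph{one} sibling family; you need a further Ramsey/indiscernibility step to make $j$-wise consistency of siblings uniform for each $j<k$.)

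The second case is where the proposal genuinely breaks, and the break is structural rather than technical. The dichotomy in this theorem is not governed by the minimal inconsistency number $k$ of the original tree: a formula can have the tree property with minimal $k\geq 3$ inside an $NTP_2$ theory (for instance inside an NIP theory with $SOP_2$, where \emph{no} formula has $TP_2$), and there the correct conclusion is that some conjunction $\bigwedge_{\ell<n}\vp(x;\bar{y}_\ell)$ has $TP_1$ --- i.e.\ the $k\geq 3$ case can, and sometimes must, land on the $TP_1$ side. Concretely, your repackaging does yield row-wise $2$-inconsistency for $\psi=\bigwedge_{\ell<k-1}\vp(x;\bar{y}_\ell)$ (two disjoint $(k-1)$-subsets of one sibling family union to a set of size $\geq k$, hence inconsistent), but the transversal consistency required for $TP_2$ has no source: a transversal selects $(k-1)$ siblings from each of infinitely many distinct sibling families, while branch-consistency only controls one node per level; in an $SOP_2$-like tree, instances at nodes lying in different immediate subtrees are outright $2$-inconsistent and every transversal dies immediately. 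This is precisely the obstacle you flag as ``the delicate technical core,'' and it cannot be overcome in general because the conclusion of that branch is false. The actual proof (Shelah III.7.11; see also Kim--Kim, ``Notions around tree property 1'') runs the dichotomy the other way: one assumes that no finite conjunction of instances of $\vp$ has $TP_1$, uses that assumption to bound the size of consistent antichain configurations in a strongly indiscernible tree, and extracts from this bound a genuine $TP_2$ array. You should reorganize the argument around that case division rather than around the value of $k$.
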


\begin{fact} \label{tree-claim}
Suppose that $\de$ is a regular uniform ultrafilter on $\lambda$, $\lambda = \lambda^{<\lambda}$ or just
$(\aleph_1, \aleph_0) \rightarrow (\lambda^+, \lambda)$ \lp see \cite{KeSh:769}\rp. 
Let $\kappa = \aleph_0$. Then for each $\epsilon < \lambda$ 
we may choose a sequence of sets $\overline{u}_\epsilon = \langle u_{\epsilon, \alpha} : \alpha < \lambda^+ \rangle$ such that:
\begin{enumerate}
\item $u_{\epsilon, \alpha} \subseteq \alpha$
\item $ | u_{\epsilon, \alpha} | < \lambda $
\item $\alpha \in u_{\epsilon, \beta} \implies u_{\epsilon, \alpha} = u_{\epsilon, \beta} \cap \alpha$
\item if $ u \subseteq \lambda^+$, $|u| < \kappa$ then 
\[ \{ \epsilon < \lambda : \exists \alpha (u \subseteq u_{\epsilon, \alpha}) \} \in \de \]
\end{enumerate}
\end{fact}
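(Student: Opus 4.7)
The plan is to reformulate conditions (1)--(3) as saying that, setting $\gamma <_\epsilon \alpha \iff \gamma \in u_{\epsilon,\alpha}$, each $<_\epsilon$ is a tree order on $\lambda^+$ refining the ordinal order, with strictly fewer than $\lambda$ predecessors per node. A brief induction using coherence (3) shows that a finite $u = \{\gamma_1 < \cdots < \gamma_n\} \subseteq \lambda^+$ satisfies $u \subseteq u_{\epsilon,\alpha}$ for some $\alpha$ if and only if $u$ is a $<_\epsilon$-chain; by transitivity this in turn reduces to $\gamma_i <_\epsilon \gamma_{i+1}$ for each $i<n$ (together with the existence of some $\alpha > \gamma_n$ with $\gamma_n <_\epsilon \alpha$). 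Since $\de$ is closed under finite intersections, condition (4) therefore follows from the pairwise claim that for every $\gamma < \beta$ in $\lambda^+$, $\{\epsilon < \lambda : \gamma \in u_{\epsilon,\beta}\} \in \de$, together with a $\de$-large set of $\epsilon$ for which $\beta \in u_{\epsilon,\beta+1}$.

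For the construction itself, I would use $\lambda = \lambda^{<\lambda}$ to select, along a continuous elementary chain $\langle N_\alpha : \alpha < \lambda^+ \rangle$ of substructures of $(H(\lambda^{++}), \in)$ with $|N_\alpha| = \lambda$ and $[N_\alpha]^{<\lambda} \subseteq N_{\alpha+1}$, a coherent family of enumerations $\{ e_\beta : |\beta| \to \beta \mid \beta < \lambda^+ \}$ with the property that for each $\alpha < \beta$ the initial segment of $e_\beta$ of length $|\alpha|$ enumerates $\alpha$ in the same order as $e_\alpha$. Setting
\[
u_{\epsilon,\beta} \;=\; \{ e_\beta(i) : i < \epsilon \},
\]
conditions (1) and (2) are immediate, and coherence (3) reduces, via the nesting of the $e_\beta$'s, to a matching computation: for $\alpha \in u_{\epsilon,\beta}$ one has $u_{\epsilon,\beta} \cap \alpha = \{e_\beta(i) : i < \epsilon, \, e_\beta(i) < \alpha\} = \{e_\alpha(i) : i < \epsilon\} = u_{\epsilon,\alpha}$. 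For (4), note that for fixed $\gamma < \beta$ the column $\{\epsilon : \gamma \in u_{\epsilon,\beta}\}$ equals $(e_\beta^{-1}(\gamma), \lambda)$, which is cofinite in $\lambda$ and hence in the uniform ultrafilter $\de$; intersecting finitely many such columns yields (4).

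The main obstacle is producing the coherent system $\langle e_\beta : \beta < \lambda^+ \rangle$: the nesting requirement is strong, demanding that the first $|\alpha|$ outputs of $e_\beta$ already list precisely $\alpha$ for every $\alpha < \beta$. The construction is a bookkeeping argument along the chain of submodels, invoking the closure $[N_\alpha]^{<\lambda} \subseteq N_{\alpha+1}$ (where $\lambda = \lambda^{<\lambda}$ is used) to guarantee that at each successor stage $N_{\gamma+1}$ we have enough enumerations available in $N_{\gamma+1}$ to extend the family coherently over all new ordinals in $N_{\gamma+1} \setminus N_\gamma$; at limit stages the enumerations are determined by continuity. Under the weaker hypothesis $(\aleph_1, \aleph_0) \to (\lambda^+, \lambda)$ from \cite{KeSh:769}, the chain of submodels is replaced by the supplied covering family of $<\lambda$-sized subsets of $\lambda^+$, and the same bookkeeping is carried out relative to that family.
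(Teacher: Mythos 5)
Your reduction of clause (4) to the pairwise statement via the tree order $<_\epsilon$ is sound, and the computation of the columns $\{\epsilon : \gamma \in u_{\epsilon,\beta}\}$ would indeed place them in any uniform ultrafilter. The fatal problem is that the gadget your construction rests on cannot exist. You ask for enumerations $e_\beta : |\beta| \to \beta$ such that for \emph{every} $\alpha < \beta$ the initial segment of $e_\beta$ of length $|\alpha|$ enumerates $\alpha$ in the order given by $e_\alpha$. Take $\alpha_1 = \omega$ and $\alpha_2 = \omega+1$ below any $\beta > \omega+1$: both have cardinality $\aleph_0$, so the initial segment of $e_\beta$ of length $\aleph_0$ would have to equal $\omega$ and also $\omega+1$ as a set, which is absurd. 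The same obstruction destroys the limit stages (``determined by continuity''): the restrictions demanded by different $\alpha < \beta$ of equal cardinality are pairwise incompatible, so there is nothing to glue. No bookkeeping along the chain $\langle N_\alpha \rangle$ can manufacture an inconsistent object, and the verification of (3) you give is exactly the place where this impossible nesting is invoked. A further warning sign is that your argument never uses regularity of $\de$, only uniformity, whereas the hypothesis of the Fact (and the remark that the statement can consistently fail for singular $\lambda$) indicates that the combinatorics is genuinely sensitive to more than uniformity.

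For comparison: the paper does not construct the system at all; its ``proof'' is a citation to Kennedy--Shelah--V\"a\"an\"anen \cite{KSV}, where conditions (1)--(4) constitute a finite square principle whose validity under $\lambda = \lambda^{<\lambda}$ (or the stated partition hypothesis) for regular $\de$ is a theorem of that paper, and whose failure is consistent for singular $\lambda$. What is actually needed is only the \emph{local} coherence (3) --- $u_{\epsilon,\alpha} = u_{\epsilon,\beta} \cap \alpha$ for the same $\epsilon$ when $\alpha \in u_{\epsilon,\beta}$ --- together with the $\de$-largeness in (4), and obtaining these two simultaneously is the real content; it is not a routine consequence of a filtration by elementary submodels. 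If you want a self-contained argument, you should abandon the globally coherent enumerations, build the arrays $\langle u_{\epsilon,\beta} : \epsilon < \lambda \rangle$ by induction on $\beta < \lambda^+$ subject only to (1)--(3), use $\lambda = \lambda^{<\lambda}$ to carry along the $<\lambda$-many constraints imposed by earlier columns, and use a regularizing family of $\de$ to arrange (4); or else simply cite \cite{KSV} as the paper does.
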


\begin{proof}
By Kennedy-Shelah-Vaananen \cite{KSV} p. 3 this is true when $\lambda$ satisfies the stated hypothesis and $\de$ is regular. 
Note that as briefly mentioned there, in the case of singular $\lambda$, the claim may be true; but it is also consistent that it may fail. 
\end{proof}

We will use $\kleq$ to indicate comparability in the $TP_1$ tree, i.e. $\eta \kleq \rho$ means $\eta$ is before $\rho$ in the partial tree order. 

\begin{claim} \label{tp1-claim}
Given $\lambda \geq \aleph_0$ regular, let $\de$ be a regular uniform ultrafilter on $\lambda$.
Suppose $T$ has $TP_1$, as witnessed by $\vp$, and let $M \models T$ be $\lambda^{++}$-saturated. Then $M^\lambda/\de$ is not $\lambda^{++}$-saturated,
and in particular is not $\lambda^{++}$-saturated for $\vp$-types.
\end{claim}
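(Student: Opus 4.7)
The plan is to build a $\vp$-type $p(x)$ over $N := M^\lambda/\de$ of size $\lambda^+$ that is finitely satisfiable (by \Los' theorem) but omitted in $N$; this directly contradicts $\lambda^{++}$-saturation for $\vp$-types. The combinatorial skeleton is provided by the coherent families $\langle u_{\epsilon,\alpha} : \alpha < \lambda^+\rangle$ for $\epsilon < \lambda$ furnished by Fact \ref{tree-claim} (applied with $\kappa = \aleph_0$, using the implicit set-theoretic hypothesis on $\lambda$, e.g.\ $\lambda = \lambda^{<\lambda}$). For each $\epsilon$, define $\alpha <_\epsilon \beta$ iff $\alpha \in u_{\epsilon,\beta}$; coherence (3) of Fact \ref{tree-claim} guarantees that each set $u_{\epsilon,\alpha}$ is a $<_\epsilon$-chain, and since $|u_{\epsilon,\alpha}| < \lambda$ and $\lambda$ is regular, every $<_\epsilon$-chain has cardinality at most $\lambda$.

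On the model-theoretic side, I would enlarge the countable $TP_1$-tree witnessing $\vp$ by compactness and $\lambda^{++}$-saturation of $M$ to a tree $\langle \overline{a}_\eta : \eta \in \mathcal{T}\rangle$ of parameters inside $M$, where $\mathcal{T}$ has enough height and branching that any $\mathcal{T}$-chain gives a consistent partial $\vp$-type in $M$ while any two $\mathcal{T}$-incomparable $\eta,\eta'$ yield inconsistent $\{\vp(x;\overline{a}_\eta),\vp(x;\overline{a}_{\eta'})\}$. For each $\epsilon < \lambda$ and each $\alpha < \lambda^+$ pick $\eta(\epsilon,\alpha) \in \mathcal{T}$ so that $\alpha \mapsto \eta(\epsilon,\alpha)$ embeds $(\lambda^+,<_\epsilon)$ into $\mathcal{T}$, preserving both comparability and incomparability. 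Let $\mathbf{b}_\alpha \in M^\lambda$ be defined by $\mathbf{b}_\alpha(\epsilon) := \overline{a}_{\eta(\epsilon,\alpha)}$ and set
\[ p(x) := \{\,\vp(x;\mathbf{b}_\alpha/\de) : \alpha < \lambda^+\,\}. \]
Finite consistency of $p$ in $N$ is immediate from Fact \ref{tree-claim}(4): for any finite $u \subseteq \lambda^+$, the set $\{\epsilon : (\exists\alpha)\, u \subseteq u_{\epsilon,\alpha}\}$ lies in $\de$, and on this set $u$ is a $<_\epsilon$-chain (since each $u_{\epsilon,\alpha}$ is), so $\{\eta(\epsilon,\beta) : \beta \in u\}$ is a $\mathcal{T}$-chain and the finite subtype is realized in $M$. \Los' theorem then yields consistency in $N$.

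The heart of the proof, which exploits $|p| = \lambda^+$ against $|I| = \lambda$, is to show that $p$ is not realized. Suppose toward contradiction that $c \in N$ realizes $p$. Set
\[ A_\alpha := \{\epsilon < \lambda : M\models\vp(c(\epsilon);\mathbf{b}_\alpha(\epsilon))\} \in \de, \qquad B_\epsilon := \{\alpha < \lambda^+ : \epsilon \in A_\alpha\}. \]
Since $\de$ is uniform, $|A_\alpha| = \lambda$ for each $\alpha$. Applying $TP_1$ inside $M$, the nodes $\{\eta(\epsilon,\alpha) : \alpha \in B_\epsilon\}$ must be pairwise $\mathcal{T}$-comparable (otherwise $c(\epsilon)$ would realize an inconsistent pair), so $B_\epsilon$ is a $<_\epsilon$-chain and hence $|B_\epsilon| \leq \lambda$. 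Double-counting the incidences $\{(\alpha,\epsilon) : \epsilon \in A_\alpha\}$ gives
\[ \lambda^+ \;\leq\; \sum_{\alpha < \lambda^+} |A_\alpha| \;=\; \sum_{\epsilon < \lambda} |B_\epsilon| \;\leq\; \lambda \cdot \lambda \;=\; \lambda, \]
the required contradiction.

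The main obstacle I expect is the combinatorial bookkeeping in choosing the order embeddings $\eta(\epsilon,\cdot)$ uniformly across $\epsilon < \lambda$, so that each of the (up to $\lambda$-many distinct) $<_\epsilon$-tree structures on $\lambda^+$ sits inside the parameter tree of $M$ with all comparabilities and, crucially, incomparabilities preserved. This is accommodated by taking $\mathcal{T}$ sufficiently universal for trees of height $\leq \lambda$ and width $\leq \lambda^+$; its existence in $M$ is a standard application of compactness to the $TP_1$ witnessing configuration together with the $\lambda^{++}$-saturation hypothesis.
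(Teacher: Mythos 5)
Your proposal is correct and takes essentially the same route as the paper's proof: the same appeal to Fact \ref{tree-claim}, the same index-wise assignment of tree nodes realizing the orders $\alpha \in u_{\epsilon,\beta}$, and the same omission argument --- your incidence double-count is just a repackaging of the paper's pigeonhole on choices $\epsilon_\alpha \in J_\alpha$, both resting on the key fact that each $B_\epsilon$ is a $<_\epsilon$-chain and hence of size at most $\lambda$ since $|u_{\epsilon,\alpha}|<\lambda$. The only cosmetic difference is that the paper instantiates your ``sufficiently universal tree'' concretely, taking parameters indexed by ${^{\lambda^+>}\lambda}$ and choosing $\eta_{\epsilon,\alpha}\in{^{\alpha}\lambda}$ by induction on $\alpha$ so that $\eta_{\epsilon,\alpha}\triangleleft\eta_{\epsilon,\beta}$ iff $\alpha\in u_{\epsilon,\beta}$.
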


\begin{proof}
Fix $\vp=\vp(x;\overline{y})$ a formula with $TP_1$.
By the hypothesis of $TP_1$ and saturation of $M$, we may choose parameters $\overline{a}_\eta \in M$ ($\ell(a_\eta) = \ell(y)$) 
for $\eta \in^{^{\lambda^+ > } \lambda}$ such that:
\begin{itemize}
\item $\{ \vp(x; a_{\eta|_i}) : i < \lambda^+ \}$ is a consistent partial type for each $\eta \in^{^{\lambda^+ > } \lambda}$
\item if $\eta, \nu \in^{^{\lambda^+ > } \lambda}$ are $\kleq$-incomparable then the set $\{ \vp(x; a_\eta), \vp(x;a_\nu) \}$
is contradictory.
\end{itemize}

Let $\langle \overline{u_\epsilon} : \epsilon < \lambda \rangle$ be as given by Fact \ref{tree-claim}. For each $\epsilon < \lambda$,
we choose by induction on $\alpha < \lambda^+$ indices $\langle \eta_{\epsilon, \alpha} : \alpha < \lambda^+ \rangle$ such that:
\begin{enumerate}
\item[(i)] $\eta_{\epsilon, \alpha} \in {^\alpha \lambda}$
\item[(ii)] $\eta_{\epsilon, \alpha} \triangleleft \eta_{\epsilon, \beta}$ iff $\alpha \in u_{\epsilon, \beta}$
\end{enumerate}

Informally, we choose indices for nodes of the tree so that consistency of the associated formulas reflects the
structure of the sets $\langle \overline{u_\epsilon} : \epsilon < \lambda \rangle$, which we can do by
the assumption on saturation of $M$ and the downward coherence condition, Fact \ref{tree-claim}(3). 

For each $\alpha < \lambda^+$ we thus have an element $f_\alpha \in M^\lambda$ given by $f_\alpha(\epsilon) = a_{\eta_{\epsilon, \alpha}} \in M$.
The sequence $\langle f_\alpha/\de : \alpha < \lambda^+ \rangle$ is a sequence of $\lambda^+$ members of $M^\lambda/\de$. Moreover, by the downward
coherence condition, if $n<\omega$, $\alpha_0 < \dots < \alpha_n < \lambda^+$ and for some $\alpha$, $\alpha_n < \alpha < \lambda$ 
we have that $\{ \alpha_0, \dots \alpha_n \} \subseteq \alpha$, then in fact $ \ell < n \implies \alpha_\ell \in u_{\alpha_{\ell + 1 }}$; 
note here that the identity of $\alpha$ is not important, only its existence. 
Thus for any $n < \omega$ and any $\alpha_0 < \dots < \alpha_n < \lambda^+$, Fact \ref{tree-claim}(4) implies that
\[ \{ \epsilon < \lambda : \eta_{\epsilon, \alpha_0} \triangleleft \eta_{\epsilon, \alpha_1} 
\triangleleft \dots \triangleleft \eta_{\epsilon, \alpha_n} \} \in \de \]
and therefore 
\[ \{ \epsilon < \lambda : M \models \exists x \bigwedge_{\ell} \vp(x; a_{\epsilon, \alpha_\ell} ) \} \in \de \]
Since $n$, $\alpha_0, \dots \alpha_n$ were arbitrary, we have verified that 
$p = \{ \vp(x; f_\alpha/\de) : \alpha < \lambda^+ \}$ is a consistent partial type.

Assume towards a contradiction that $p$ is realized, say by $f \in {^\lambda M}$. For each $\alpha < \lambda^+$, define
$J_\alpha = \{ \epsilon < \lambda : M \models \vp(f(\epsilon), f_\alpha(\epsilon)) \}$ and note that $J_\alpha \in \de$
since we assumed $f$ realizes the type. By definition of $f_\alpha$, if $\epsilon \in J_\alpha$ then 
$M \models \vp(f(\epsilon), a_{\eta_{\epsilon, \alpha}})$. For each $\alpha < \lambda^+$, as $J_\alpha \neq \emptyset$
we may choose some $\epsilon_\alpha \in J_\alpha$. Since $\lambda^+ > \lambda$ is regular, there is some 
$\epsilon_* < \lambda$ such that $S = \{ \alpha < \lambda^+ : \epsilon_\alpha = \epsilon_* \} \subseteq \lambda^+$
is unbounded in $\lambda^+$. By definition of $J_\alpha$, this means that $p_* = \{ \vp(x,a_{\eta_{\epsilon_*, \alpha}}) : \alpha \in S \}$
is a consistent partial type in $M$. By definition of $TP_1$, if $p_*$ is a consistent partial type it must be that
$X = \{ \eta_{\epsilon_*, \alpha} : \alpha \in S \}$ has no two $\kleq$-incomparable elements. 
Since $|S| = \lambda^+$, we may choose some $\alpha \in S$ such that $|S \cap \alpha| = \lambda$. 
But this contradicts the choice of indices $\eta_{\epsilon, \alpha}$ in (i)-(ii) above, in light of Fact \ref{tree-claim}(2). 
\end{proof}

We now consider $TP_2$.

\begin{obs} \label{array-claim}
Let $|I| = \lambda \geq \aleph_0$ and let $\de$ be a regular ultrafilter on $\lambda$. There are functions
$\langle \nu_\alpha : \lambda \leq \alpha < \lambda^+ \rangle$ such that:
\begin{itemize}
\item $\nu_\alpha : I \rightarrow \alpha$
\item for any $\alpha < \beta < \lambda^+$, 
$ \{ i \in I : \nu_\alpha(i) = \nu_\beta(i) \}  \notin \de$
\end{itemize}
\end{obs}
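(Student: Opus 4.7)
The plan is to reduce this to a cardinal count in the reduced power $\lambda^I/\de$, exploiting the regularity of $\de$ via Fact \ref{mu-upper-bound}. Since the required functions $\nu_\alpha$ must take values in $\alpha \supseteq \lambda$, there is no loss in looking for functions into $\lambda$ itself; the indexing by $\alpha$ is only a book-keeping device that lists $\lambda^+$ many pairwise $\de$-inequivalent functions.

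Concretely, I would proceed as follows. First, by Fact \ref{mu-upper-bound} applied with $\kappa = \lambda$, the regular ultrafilter $\de$ on $I$ satisfies $|\lambda^I/\de| = \lambda^{|I|} = \lambda^\lambda \geq 2^\lambda \geq \lambda^+$. Hence we may choose a sequence $\langle \nu_\alpha : \lambda \leq \alpha < \lambda^+ \rangle$ of functions from $I$ to $\lambda$ whose images modulo $\de$ are pairwise distinct; that is, for any $\alpha < \beta < \lambda^+$,
\[ \{ i \in I : \nu_\alpha(i) \neq \nu_\beta(i) \} \in \de, \]
which, since $\de$ is an ultrafilter, is equivalent to $\{ i \in I : \nu_\alpha(i) = \nu_\beta(i) \} \notin \de$. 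Second, since $\lambda \leq \alpha$ in the ordinal sense, we have $\lambda \subseteq \alpha$, so each $\nu_\alpha$ may legitimately be regarded as a function $I \to \alpha$, as required.

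This gives both clauses of the observation simultaneously, and no construction by induction is needed once the cardinality bound is in hand. There is no real obstacle: the only point to check is that Fact \ref{mu-upper-bound} supplies enough inequivalent functions, and since $\lambda^\lambda = 2^\lambda \geq \lambda^+$ for every infinite $\lambda$, this is immediate. If one preferred an inductive construction (for example, to avoid appeal to Fact \ref{mu-upper-bound}), one would pick $\nu_\alpha$ at stage $\alpha$ as any function $I \to \lambda$ inequivalent mod $\de$ to the $\leq |\alpha| \leq \lambda$ functions already chosen; since the set of equivalence classes in $\lambda^I/\de$ has size $\geq 2^\lambda > \lambda$, such a $\nu_\alpha$ exists at every stage, and the induction goes through without difficulty.
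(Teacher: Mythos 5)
Your proof is correct and is essentially the paper's argument: the paper likewise derives the observation from a cardinality count in a reduced power, citing that $|\mathbb{N}^\lambda/\de| = 2^\lambda \geq \lambda^+$ for regular $\de$ (the $\kappa=\aleph_0$ instance of the same fact you invoke with $\kappa=\lambda$), and then takes $\lambda^+$ pairwise $\de$-inequivalent functions, regarded as maps into $\alpha$ since $\lambda\subseteq\alpha$.
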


\begin{proof}
This follows e.g. from the statement that for any regular ultrafilter $\de$ on $\lambda$,
$|\mathbb{N}^\lambda/\de| = 2^\lambda$. 
\end{proof}

\begin{claim} \label{tp2-claim}
Given $\lambda \geq \aleph_0$, let $\de$ be a regular uniform ultrafilter on $I$, $|I| = \lambda$.
Suppose $T$ has $TP_2$, as witnessed by $\vp$, and let $M \models T$ be $\lambda^{++}$-saturated. Then $M^\lambda/\de$ is not $\lambda^{++}$-saturated,
and in particular is not $\lambda^{++}$-saturated for $\vp$-types.
\end{claim}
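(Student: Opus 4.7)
The plan is to imitate the structure of Claim \ref{tp1-claim} but replace the tree bookkeeping of Fact \ref{tree-claim} by the functions from Observation \ref{array-claim}, and to extract the final contradiction via Fodor's lemma on $\lambda^+$ rather than via a pigeonhole inside a small set of tree-predecessors.

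\textbf{Setting up the type.} First, using $\lambda^{++}$-saturation of $M$ together with compactness, I would extend the $TP_2$ witness to a full array $\langle a^i_j : i, j < \lambda^+\rangle$ in $M$ enjoying the two defining properties: (i) for any $w \subseteq \lambda^+$ and any $\eta : w \to \lambda^+$, the partial type $\{\vp(x; a^i_{\eta(i)}) : i \in w\}$ is consistent in $M$; (ii) for each fixed $i$ and $j \neq j'$ the pair $\{\vp(x; a^i_j), \vp(x; a^i_{j'})\}$ is inconsistent. Next, fix the functions $\langle \nu_\alpha : \lambda \leq \alpha < \lambda^+\rangle$ from Observation \ref{array-claim}, so $\nu_\alpha : I \to \alpha$ and $\alpha \neq \beta$ implies $\{\epsilon \in I : \nu_\alpha(\epsilon) = \nu_\beta(\epsilon)\} \notin \de$. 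Define $f_\alpha \in M^I$ by $f_\alpha(\epsilon) = a^{\nu_\alpha(\epsilon)}_\alpha$; informally, at coordinate $\epsilon$ the ``column'' of the $TP_2$ array is $\nu_\alpha(\epsilon)$ and the ``element within the column'' is $\alpha$.

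\textbf{Consistency.} Consider $p(x) = \{\vp(x; f_\alpha/\de) : \lambda \leq \alpha < \lambda^+\}$, a set of formulas of cardinality $\lambda^+$ in $N = M^I/\de$. For any finitely many $\alpha_0 < \dots < \alpha_n$ from $[\lambda, \lambda^+)$, the set $\bigcap_{k < l} \{\epsilon : \nu_{\alpha_k}(\epsilon) \neq \nu_{\alpha_l}(\epsilon)\}$ lies in $\de$ as a finite intersection of elements of $\de$, and on this set the column indices $\nu_{\alpha_k}(\epsilon)$ are pairwise distinct, so by (i) the formulas $\vp(x; f_{\alpha_k}(\epsilon))$ admit a common witness in $M$. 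By \lost theorem, $p$ is consistent.

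\textbf{Omission via Fodor.} Suppose for contradiction that $g \in M^I$ satisfies $g/\de \models p$, and set $J_\alpha = \{\epsilon : M \models \vp(g(\epsilon); f_\alpha(\epsilon))\} \in \de$. Choose any $\epsilon_\alpha \in J_\alpha$ and let $F : \lambda^+ \to I$, $F(\alpha) = \epsilon_\alpha$. The key step is to find $\epsilon_* \in I$ such that $S := F^{-1}(\epsilon_*)$ is \emph{stationary} in $\lambda^+$: otherwise each fiber admits a disjoint club $C_\epsilon \subseteq \lambda^+$, and since $|I| = \lambda < \lambda^+ = \cf(\lambda^+)$ the intersection $\bigcap_{\epsilon \in I} C_\epsilon$ is itself a club in $\lambda^+$, from which any $\alpha$ yields $F(\alpha) \notin I$, absurd. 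On this stationary $S$, the single element $g(\epsilon_*) \in M$ realizes $\{\vp(x; a^{\nu_\alpha(\epsilon_*)}_\alpha) : \alpha \in S\}$, so clause (ii) forces $\alpha \mapsto \nu_\alpha(\epsilon_*)$ to be injective on $S$. Since $\nu_\alpha(\epsilon_*) < \alpha$, this map is regressive, and Fodor's lemma then produces a stationary (hence at least two-element) subset of $S$ on which it is constant, contradicting injectivity. The main obstacle is precisely this passage from an unbounded fiber, which pigeonhole alone delivers, to a stationary fiber, which Fodor requires; the club-intersection step dispatches it using only $\lambda < \cf(\lambda^+)$, and the remainder is a clean reinterpretation of $TP_2$-inconsistency as regressiveness of the column-picking function, playing the role of the ``too many predecessors on a branch'' obstruction in the $TP_1$ case.
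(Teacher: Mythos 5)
Your proposal is correct and follows essentially the same route as the paper's proof: the same functions $\nu_\alpha$ from Observation \ref{array-claim}, the same definition $f_\alpha(\epsilon)=a_{\nu_\alpha(\epsilon),\alpha}$, the same \L o\'s argument for consistency, and the same Fodor-based contradiction from a constant choice of index on a stationary set. Your only addition is the explicit club-intersection justification that some fiber of $\alpha\mapsto\epsilon_\alpha$ is stationary, a step the paper leaves implicit.
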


\begin{proof}
Let $\langle \nu_\alpha : \alpha < \lambda^+ \rangle$ be as given by Observation \ref{array-claim}. By the definition of $TP_2$, for some formula 
$\vp = \vp(x;\overline{y})$ we have an array $\{ {a}_{\alpha, \beta} : \alpha, \beta < \lambda^+ \} \subseteq M$, with
$\ell({a}) = \ell(\overline{y})$, such that:
\begin{enumerate}
\item for $\beta < \gamma < \lambda^+$, $\{ \vp(x;a_{\alpha, \beta}), \vp(x; a_{\alpha, \gamma}) \}$ is contradictory
\item for any $n < \omega$ and $\alpha_0 < \dots < \alpha_n < \lambda^+$, and any $\{ \beta_0, \dots \beta_n \} \subseteq \lambda^+$,
the set $\{ \vp(x;a_{\alpha_i, \beta_i}) : i \leq n \}$ is a consistent partial type
\end{enumerate}
Informally, the columns consist of parameters for pairwise contradictory instances of $\vp$, whereas choosing any sequence of parameters
with no more than one in any given column produces a consistent partial type. 

Now we define for each $\alpha$, $\lambda \leq \alpha < \lambda^+$ functions $f_\alpha \in M^I$ by: $f_\alpha(i) = a_{\nu_\alpha(i), \alpha}$. 
Let us verify that $p(x) = \{ \vp(x; f_\alpha/\de) : \alpha < \lambda^+ \}$ is a consistent partial type. For any $n < \omega$ and
$\alpha_0 < \dots < \alpha_n < \lambda^+$, we have by the second item in Observation \ref{array-claim} that 
$\{ i \in I : \bigwedge\{ v_{\alpha_j}(i) \neq v_{\alpha_k}(i) : j < k \leq n \} \in \de$, and thus by definition of the $TP_2$ array,
$\{ \vp(x;f_{\alpha_j}/\de : j \leq n \}$ is consistent. 

Suppose for a contradiction that $p$ were realized in $M^\lambda/\de$, say by $f \in {^IM}$. We proceed analogously to Claim \ref{tp1-claim}.
For each $\alpha < \lambda^+$, define
$J_\alpha = \{ \epsilon < \lambda : M \models \vp(f(\epsilon), f_\alpha(\epsilon)) \}$, which again will be in $\de$ (and in particular, nonempty)
since we assumed $f$ realizes the type. For each $\alpha < \lambda^+$, choose some $\epsilon_\alpha \in J_\alpha$, and let 
$S \subseteq \lambda^+$ be a stationary set on which this choice is constant, call it $\epsilon_*$. Now we look at
$p_* = \{ \vp(x,a_{\nu_\alpha(\epsilon_*), \alpha}) : \alpha \in S \}$. By definition of $S$, $f(\epsilon_*)$ realizes this type,
so in particular it is consistent.  But since $S$ is stationary and $\nu_\alpha(\epsilon) < \alpha$,
by Fodor's lemma there are $\alpha \neq \beta$ from $S$ such that $\nu_\alpha (\epsilon_*) = \nu_\beta (\epsilon_*) =: \gamma$. But then
$\{ \vp(x, a_{\gamma, \alpha}), \vp(x, a_{\gamma, \beta}) \} \subseteq p_*$ is contradictory, and this completes the proof.
\end{proof}

\begin{concl} \label{simple-non-sat} 
Let $\lambda = \lambda^{<\lambda}$ and let $\de$ be a regular ultrafilter on $\lambda$. If $T$ is not simple and $M \models T$, then 
there is $\vp$ such that $M^\lambda/\de$ is not $\lambda^{++}$-saturated for $\vp$-types.
\end{concl}

\begin{proof}
By Claim \ref{tp1-claim}, Claim \ref{tp2-claim} and Fact \ref{tp1-tp2}.
\end{proof}

\begin{rmk}
Let $D_1, D_2$ be ultrafilters on $\lambda, \kappa$ respectively and suppose that $\kappa = \kappa^{<\kappa}$.
If $\lambda \geq \kappa^+$ and $D_2$ is regular, then by Theorem \ref{formula-corr}(5) and Conclusion 
\ref{simple-non-sat}, $D_1 \times D_2$ cannot be good for equality.  
\end{rmk}

\section{$\kappa$-complete not $\kappa^+$-complete implies no $(\kappa, \kappa)$-cuts} \label{s:kappa-cuts}

\begin{claim} \label{complete-cuts1}
Suppose that $\ee$ is a $\kappa$-complete but not $\kappa^+$-complete ultrafilter on $I$ and
$M_1$ is a $\kappa^+$-saturated model in which a linear order $L$ and tree $T$ are interpreted. 
Then in $M_2 = M_1^I/\ee$ :
\begin{enumerate}
\item[(a)] the linear order $L^{M_2}$ has no $(\kappa, \kappa)$-cut, and moreover no $(\theta, \sigma)$-cut
for $\theta, \sigma < \kappa$ both regular.
\item[(b)] the tree $T^{M_2}$ has no branch (i.e. maximal linearly ordered set) of cofinality $\leq \kappa$.
\end{enumerate}
\end{claim}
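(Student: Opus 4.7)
\textbf{Proof proposal for Claim \ref{complete-cuts1}.}

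The plan is to construct a realizing/extending element of $M_2$ pointwise, using $\kappa$-completeness to guarantee the relevant structure holds $\ee$-almost everywhere and $\kappa^{+}$-saturation of $M_1$ to produce the local witnesses. The twist for the boundary case ($\theta = \sigma = \kappa$ in (a), and cofinality exactly $\kappa$ in (b)) is to exploit the failure of $\kappa^{+}$-completeness via a decreasing witness sequence with empty intersection; the index $t$ will only need to ``see'' an initial segment of the data of length $< \kappa$, and that segment will vary with $t$.

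First, for part (a) in the subcase $\theta, \sigma < \kappa$: given a pre-cut $\langle a_i/\ee : i<\theta\rangle$, $\langle b_j/\ee : j<\sigma\rangle$, the collection of sets $\{t : a_i(t) < b_j(t)\}$ together with strict-ordering sets among the $a$'s and among the $b$'s has cardinality $< \kappa$, each in $\ee$ by \Lost theorem, hence their intersection $X$ is in $\ee$ by $\kappa$-completeness. For each $t \in X$, the configuration $\{a_i(t)\}\cup\{b_j(t)\}$ is a pre-cut in $M_1$ of size $\theta+\sigma<\kappa$, so by $\kappa^{+}$-saturation it is realized by some $c(t) \in M_1$; then $c/\ee$ realizes the pre-cut in $M_2$. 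For the main subcase $\theta = \sigma = \kappa$: since $\ee$ is $\kappa$-complete but not $\kappa^{+}$-complete, fix a $\subseteq$-decreasing sequence $\langle X_\alpha : \alpha < \kappa \rangle \subseteq \ee$ with $\bigcap_\alpha X_\alpha = \emptyset$ (start from any counterexample $\{Y_\alpha\}$ to $\kappa^{+}$-completeness, replace it with its partial intersections via $\kappa$-completeness, and then intersect each with the complement of $\bigcap Y_\alpha \notin \ee$). Refine each $X_\alpha$ by intersecting with all $< \kappa$ many sets expressing the strict pre-cut relations among $\{a_i, b_j : i,j<\alpha\}$; $\kappa$-completeness keeps $X_\alpha \in \ee$ and the sequence decreasing. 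For each $t \in I$ define $\alpha(t) = \min\{\alpha < \kappa : t \notin X_\alpha\}$, which is defined because the intersection is empty. For each $t$ with $\alpha(t) > 0$, the set $\{a_i(t), b_j(t) : i,j < \alpha(t)\}$ is an honest pre-cut in $M_1$ of size $< \kappa$, realized by some $c(t)$ using $\kappa^{+}$-saturation (with $c(t)$ chosen arbitrarily on the rest). Then for any fixed $i_0 < \kappa$,
\[ \{t : a_{i_0}(t) < c(t)\} \supseteq \{t : \alpha(t) > i_0\} = X_{i_0} \in \ee, \]
and symmetrically for the $b_{j_0}$'s, so $c/\ee$ realizes the cut in $M_2$, a contradiction.

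The proof of part (b) runs in parallel. Given a linearly ordered subset of $T^{M_2}$ of cofinality $\mu \leq \kappa$, fix a cofinal increasing chain $\langle c_i/\ee : i < \mu\rangle$. If $\mu < \kappa$, intersect the $< \kappa$ many sets recording the tree-order relations among the $c_i$ to get $X \in \ee$; at each $t \in X$ the chain $\{c_i(t) : i < \mu\}$ in $T^{M_1}$ has finitely satisfiable upper-bound type (any finite subchain has its top element as witness), so by $\kappa^{+}$-saturation there is $c(t) \in T^{M_1}$ above all $c_i(t)$, and $c/\ee$ strictly extends the chain, contradicting maximality of the supposed branch. If $\mu = \kappa$, apply exactly the machinery of part (a): refine the decreasing $X_\alpha$ so that the initial segment $\{c_i(t) : i<\alpha\}$ is a chain in $T^{M_1}$ on $X_\alpha$, let $\alpha(t)$ be the first dropout, and use $\kappa^{+}$-saturation to choose $c(t) \geq_T c_i(t)$ for all $i < \alpha(t)$; then $c/\ee$ again strictly extends the chain in $T^{M_2}$.

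The main obstacle, and the crux of the argument, is the boundary case. Naively $\kappa$-completeness only gives control of $< \kappa$ sets at once, which is not enough for a cofinal chain of length $\kappa$; the essential use of the hypothesis that $\ee$ fails to be $\kappa^{+}$-complete is precisely to supply a decreasing $\ee$-sequence with empty intersection, so that the local ``depth'' $\alpha(t)$ is forced to be $<\kappa$ but can be made arbitrarily large on arbitrarily large sets in $\ee$. Everything else is bookkeeping.
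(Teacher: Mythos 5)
Your proposal is correct and follows essentially the same route as the paper: use the failure of $\kappa^{+}$-completeness to produce a decreasing sequence in $\ee$ with empty intersection, define a local depth $\alpha(t)<\kappa$, realize the length-$\alpha(t)$ initial segment of the pre-cut at each index by $\kappa^{+}$-saturation, and check the resulting element lies inside the cut on an $\ee$-large set (the paper handles the $\theta,\sigma<\kappa$ case by citing $L_{\kappa,\kappa}$-equivalence, which is the same content as your direct argument). The only blemish is an off-by-one: when $\alpha(t)$ is a successor $\delta+1$, membership in $X_\gamma$ for $\gamma<\alpha(t)$ only certifies the pre-cut relations among indices $<\delta$, not $<\alpha(t)$; this is harmlessly repaired by defining $\alpha(t)=\min\{\alpha: t\notin X_{\alpha+1}\}$ (as the paper does), which shifts your final containment to $X_{i_0+1}\in\ee$ and changes nothing else.
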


\begin{rmk} In the statement of Claim \ref{complete-cuts1}:
\begin{enumerate}
\item in \emph{(a)}, the $\kappa$-saturation of $M_1$ is necessary in the following sense: if there is a sequence
$\overline{\theta} = \langle \theta_t : t \in I \rangle$, which certainly need not be distinct, 
such that $M$ has a $(\theta_t, \theta_t)$-cut for each 
$t \in I$ and $(\prod_{t \in I} \theta_t, <_\ee)$ has cofinality $\kappa$, then the conclusion of Claim \ref{complete-cuts1}(a) is false. 
\item By this Claim, we may add to the conclusion of
Theorem \ref{flexible-not-good-2} that $(\kappa, \kappa) \notin \mc(\ee)$, Definition \ref{mc}, since in that Theorem 
the ultrafilter $E$ is a $\kappa$-complete uniform ultrafilter on $\kappa$ and thus not $\kappa^+$-complete. 
\end{enumerate}
\end{rmk}

\begin{proof} (of Claim \ref{complete-cuts1})

(a)
The ``moreover'' clause in (a) follows from the fact that $M_1$ and $M_2$ are $L_{\kappa, \kappa}$-equivalent, by the completeness of $\ee$,
and the hypothesis on saturation of $M_1$.

So we consider a potential $(\kappa, \kappa)$-cut in $M_2$, i.e. a $(\kappa, \kappa)$-pre-cut 
given by $\langle f_\alpha : \alpha < \kappa \rangle$, $\langle g_\alpha : \alpha < \kappa \rangle$
where if $\alpha < \beta < \kappa$ then 
\[ M_2 \models (f_\alpha/\ee) <_L (f_\beta/\ee) <_L < (g_\beta/\ee) <_L (g_\alpha/\ee) \]

For $0 < \gamma < \kappa$ let
\[ A_\gamma = \{ t : ~\mbox{if}~ \alpha < \beta < \gamma ~\mbox{then} ~M_1 \models f_\alpha(t) <_L f_\beta(t) <_L g_\beta(t) <_L g_\alpha(t) \} \]

Let $A_0 = A_1 = I$. Then $\overline{A} = \langle A_\gamma : \gamma < \kappa \rangle$ is a continuously decreasing sequence of elements of $\ee$, i.e.:
\begin{itemize}
\item $\gamma_1 < \gamma_2 \implies A_{\gamma_1} \supseteq A_{\gamma_2}$
\item for limit $\delta < \kappa$, $A_\delta = \bigcap \{ A_\gamma : \gamma < \delta \}$
\item each $A_\gamma \in \ee$, by choice of the functions and $\kappa$-completeness
\end{itemize}

As we assumed $\ee$ is $\kappa$-complete but not $\kappa^+$-complete, there is a sequence $\overline{B} = \langle B_\gamma : \gamma < \kappa \rangle$
of elements of $\ee$ such that $\bigcap\{ B_\gamma : \gamma < \kappa \} = \emptyset$.
We may furthermore assume that $\overline{B}$ is a continuously decreasing sequence (if necessary, inductively replace $B_\delta$
by $\bigcap \{ B_\gamma : \gamma < \delta \}$ using $\kappa$-completeness).

Thus given $\overline{A}, \overline{B}$, for each $t \in I$ we may define 
\[ \gamma(t) = \operatorname{min} \{ \alpha : t \notin A_{\alpha+1} \cap B_{\alpha+1} \} \]

By choice of $\overline{B}$, $t \mapsto \gamma(t)$ is a well-defined function from $I$ to $\kappa$, and 
$t \in A_{\gamma(t)} \cap B_{\gamma(t)}$. Recall that we want to show that our given $(\kappa, \kappa)$-sequence is not a cut. 
Choose $f_\kappa, g_\kappa \in {^IM}$ so that first, for each $t \in I$, $f_\kappa(t), g_\kappa(t) \in L^{M_1}$,
and second, for each $t \in I$ and all $\alpha < \gamma(t)$, 
\[ M_1 \models f_\alpha(t) \leq_{L} f_\kappa(t) <_L g_\kappa(t) \leq_L g_\alpha(t) \]
This we can do by the choice of $\overline{A}$ as a continuously decreasing sequence (so the function values $f_\alpha, g_\beta$ 
below $\gamma(t)$ in each index model are correctly ordered) 
and the saturation hypothesis on $M_1$. Thus for each $\alpha < \kappa$, we have that
\[ \{ t : f_\alpha(t) \leq_L  f_\kappa(t) <_L g_\kappa(t) \leq_L g_\alpha(t) \} \supseteq A_{\alpha+1} \cap B_{\alpha+1} \in \ee \]
which completes the proof.

(b) Similar proof, but we only need to use one sequence $\langle f_\alpha : \alpha < \kappa \rangle$ which we choose to 
potentially witness that the cofinality of the branch is at most $\kappa$. 
\end{proof} 

We prove a related fact for normal filters, Definition \ref{d:normal}.

\begin{claim} \label{n19}
Assume $\ee$ is a normal filter on $\lambda$ and $M$ is a $\lambda^+$-saturated dense linear order. Then 
$M^I/\ee$ is $\lambda^+$-saturated.
\end{claim}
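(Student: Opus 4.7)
The plan is to realize in $N = M^\lambda/\ee$ any consistent partial type of size $\leq \lambda$. Because $M$ is a dense linear order, such a type is determined, up to trivialities, by a pre-cut given by an increasing sequence $\langle f_\alpha/\ee : \alpha < \theta_1 \rangle$ and a decreasing sequence $\langle g_\beta/\ee : \beta < \theta_2 \rangle$ in $N$, with $\theta_1, \theta_2 \leq \lambda$ regular; the goal is to produce $h \in M^\lambda$ whose $\ee$-class lies strictly between them.

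For each $\delta < \lambda$ set
\[ A_\delta = \{ t < \lambda : \langle f_\alpha(t) \rangle_{\alpha < \min(\delta,\theta_1)} \mbox{ and } \langle g_\beta(t) \rangle_{\beta < \min(\delta,\theta_2)} \mbox{ form a pre-cut in } M \}. \]
Each $A_\delta$ is an intersection of fewer than $\lambda$ sets already in $\ee$ (by \lost theorem applied to the pointwise order relations $f_\alpha(t) < f_{\alpha'}(t)$, $g_\beta(t) > g_{\beta'}(t)$, and $f_\alpha(t) < g_\beta(t)$), and hence belongs to $\ee$ by the $\lambda$-completeness consequence of normality, using the standing convention that $\ee$ extends the Fr\'echet filter on $\lambda$. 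By normality (Definition \ref{d:normal}), the diagonal intersection
\[ A = \{ t < \lambda : t \in A_\delta \mbox{ for every } \delta < 1+t \} \]
also lies in $\ee$. For each $t \in A$, the truncated pre-cut at coordinate $t$ is a consistent partial type in $M$ of size at most $|t|+1 < \lambda$, hence realized by $\lambda^+$-saturation of $M$; pick $h(t) \in M$ realizing it, and set $h(t)$ arbitrarily for $t \notin A$. To verify that $h/\ee$ realizes the original pre-cut, fix $\alpha < \theta_1$: the tail $\{t < \lambda : t > \alpha\}$ lies in $\ee$, so $A \cap \{t : t > \alpha\} \in \ee$, and for every $t$ in this set one has $\alpha < \min(t, \theta_1)$ and hence $f_\alpha(t) < h(t)$ by construction. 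Thus $f_\alpha/\ee < h/\ee$; the argument for $g_\beta/\ee$ is symmetric.

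The main obstacle is that bare $\lambda$-completeness suffices only when $\theta_1, \theta_2 < \lambda$ (one can then simply intersect the $<\lambda$-many sets where the pre-cut holds pointwise and invoke saturation of $M$ on the result) but fails when the cofinalities of the cut equal $\lambda$ itself. Normality is precisely what bridges this gap: via the diagonal intersection above, each index $t$ is required only to respect an initial segment of length $t$ of the cut, and the Fr\'echet-tail property of $\ee$ then guarantees that every individual parameter $f_\alpha/\ee$, $g_\beta/\ee$ is correctly placed almost everywhere. This plays a role dual to that of the decreasing sequence $\overline{B}$ witnessing failure of $\kappa^+$-completeness in Claim \ref{complete-cuts1}: there, controlled incompleteness forced omission; here, controlled diagonal closure forces realization.
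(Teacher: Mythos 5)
Your proof is correct and follows essentially the same route as the paper's: use normality (via a diagonal intersection, after securing each $A_\delta\in\ee$ from $\Los$ plus the completeness that normality-with-tails provides) to find a large set of indices $t$ at which the length-$t$ initial segment of the pre-cut is a genuine pre-cut in $M$, realize it there by $\lambda^+$-saturation, and check the result works using the tail sets. The paper packages this as two nested diagonal intersections ($Y_\beta$ then $Z$) rather than your single completeness step plus one diagonal intersection, but the argument is the same.
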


\begin{proof}
Suppose that $\langle f_\alpha/\ee : \alpha < \kappa_1 \rangle$ is increasing in $M^I/\ee$, and
$\langle g_\beta/\ee : \beta < \kappa_2 \rangle$ is decreasing in $M^I/\ee$, with $\kappa_1, \kappa_2 \leq \lambda$
and $f_\alpha/\ee < g_\beta/\ee$ for $\alpha<\kappa_1, \beta<\kappa_2$. Let
\[ X_{\alpha, \beta} = \{ t \in \lambda  : f_\alpha(t) < g_\beta(t) \} \in \ee \]
for $\alpha < \beta < \lambda$.    
Without loss of generality, suppose $\kappa_1 \leq \kappa_2$. For each $\beta < \kappa_2$, let
\[ Y_\beta = \{ \alpha \in \lambda : (\forall j < (1+\alpha) \cap \beta) (j \in X_{\alpha, \beta}) \}  \in \ee \]
by normality. For $\beta \geq \kappa_2$, let $Y_\beta = I$. Finally, define
\[ Z = \{ \beta \in \lambda : (\forall k < (1+\beta) \cap \kappa_2 ) ( j \in Y_\beta) \} \in \ee \]
Now if $t \in Z$ (so $t$ plays the role of $\beta$) we have that 
\[ p_t = \{ f_\alpha(t) < x < g_t(t) : \alpha < t \} \]
is a consistent partial type, realized in $M$ by the saturation hypothesis. Choose
$h \in {^\lambda M}$ such that for each $t \in Z$, $h \models p_t$. Then $h$ realizes the type. 
\end{proof}

Note that Claim \ref{n19} implies by Fact \ref{fact-good} of the Appendix that the relevant 
$\ee$ is good.

The use of the additional hypothesis ``normal'' in this section comes in Step 3 of Claim \ref{kappa-plus-cut},
and consequently in later results which rely on it. Recall Definition \ref{d:normal} and Fact \ref{normal-fodor}.

\begin{claim} \label{kappa-plus-cut}
Assume $\kappa$ measurable, $\ee$ a normal $\kappa$-complete ultrafilter on $\kappa$, 
$\lambda \geq \kappa$,
$M_1$ a $\lambda$-saturated model with ${(L_M, <_M)}$ a dense linear order. Let $M_2 = M_1^\kappa/\ee$. Then
$L_{M_2}$ has a $(\kappa^+, \kappa^+)$-cut.
\end{claim}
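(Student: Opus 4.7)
The plan is to build, by induction on $\alpha < \kappa^+$, pairs of functions $(f_\alpha, g_\alpha) \in M_1^\kappa$ so that the resulting sequences $\langle f_\alpha/\ee : \alpha < \kappa^+\rangle$ and $\langle g_\alpha/\ee : \alpha < \kappa^+\rangle$ form a $(\kappa^+, \kappa^+)$-pre-cut in $L_{M_2}$, and then to show by a Fodor-style argument combined with a \emph{laminar} invariant on the intervals at each index that no element of $M_2$ can realize this cut. The normality of $\ee$ enters the construction itself, ensuring that the pre-cut property is witnessed on an $\ee$-large set of indices.

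For the construction, fix for each $\alpha \in [\kappa, \kappa^+)$ an injection $\pi_\alpha : \alpha \hookrightarrow \kappa$ (possible since $|\alpha| \leq \kappa$), and set $U_\alpha(\epsilon) = \{\beta < \alpha : \pi_\alpha(\beta) < \epsilon\}$. Then $|U_\alpha(\epsilon)| \leq \epsilon < \kappa$, while for each fixed $\beta < \alpha$ the set $\{\epsilon < \kappa : \beta \in U_\alpha(\epsilon)\} = \kappa \setminus (\pi_\alpha(\beta)+1)$ is co-bounded and hence in $\ee$. At stage $\alpha$ and index $\epsilon$, I choose $f_\alpha(\epsilon), g_\alpha(\epsilon) \in L_{M_1}$ with $f_\alpha(\epsilon) <_L g_\alpha(\epsilon)$ so that $(f_\alpha(\epsilon), g_\alpha(\epsilon))$ is a proper subinterval of $(f_\beta(\epsilon), g_\beta(\epsilon))$ for every $\beta \in U_\alpha(\epsilon)$, and placed so as to preserve the invariant that $\{(f_\beta(\epsilon), g_\beta(\epsilon)) : \beta \leq \alpha\}$ is laminar (any two intervals are nested or disjoint) with every chain of length $< \kappa$. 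Since $|U_\alpha(\epsilon)| < \kappa \leq \lambda$, the $\lambda$-saturation and density of $L_{M_1}$ make this realizable whenever the intersection $\bigcap_{\beta \in U_\alpha(\epsilon)}(f_\beta(\epsilon), g_\beta(\epsilon))$ is nonempty. Normality of $\ee$ is then used to show that the set of $\epsilon$'s for which the construction succeeds lies in $\ee$, via an inductive Fodor argument applied to the regressive ``least bad $\beta$'' function at stages where the intersection might fail. On the $\ee$-null complement, $f_\alpha(\epsilon), g_\alpha(\epsilon)$ may be chosen arbitrarily.

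The pre-cut property follows immediately: for $\beta < \alpha$, the set $\{\epsilon : f_\beta(\epsilon) < f_\alpha(\epsilon) < g_\alpha(\epsilon) < g_\beta(\epsilon)\}$ contains $\{\epsilon : \beta \in U_\alpha(\epsilon)\}$ intersected with the $\ee$-good set, hence is in $\ee$. For non-realization, suppose some $h \in M_1^\kappa$ realized the cut; then $B_\alpha := \{\epsilon : f_\alpha(\epsilon) < h(\epsilon) < g_\alpha(\epsilon)\} \in \ee$ for each $\alpha < \kappa^+$, so in particular nonempty. Define $s : [\kappa, \kappa^+) \to \kappa$ by $s(\alpha) = \min B_\alpha$; this is regressive on the club $[\kappa, \kappa^+)$ of $\kappa^+$, because $s(\alpha) < \kappa \leq \alpha$. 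By Fodor's lemma on the regular cardinal $\kappa^+$, there are a stationary $S \subseteq [\kappa, \kappa^+)$ and $\epsilon^* < \kappa$ with $s \restriction S \equiv \epsilon^*$. At index $\epsilon^*$, every interval $(f_\alpha(\epsilon^*), g_\alpha(\epsilon^*))$ for $\alpha \in S$ contains $h(\epsilon^*)$, so by laminarity these intervals are pairwise nested and form a chain; but $|S| = \kappa^+$ contradicts the bounded-chain invariant, which limits chain length at any index to $< \kappa$.

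The principal obstacle is arranging the inductive construction so that both the nested intersection indexed by $U_\alpha(\epsilon)$ is nonempty on an $\ee$-large set at each stage, and a subinterval of it disjoint from the non-chain intervals at $\epsilon$ can actually be placed. This is exactly where normality of $\ee$, rather than mere $\kappa$-completeness, is essential; the same contrast distinguishes this proof from the $(\kappa, \kappa)$-nonexistence result of Claim \ref{complete-cuts1}, which used only that $\ee$ is $\kappa$-complete but not $\kappa^+$-complete.
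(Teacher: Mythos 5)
Your proposal is correct and follows essentially the same strategy as the paper's proof: a $<\kappa$-sized filtration of each $\alpha<\kappa^+$ indexed by $\epsilon<\kappa$, a laminar (nested-or-disjoint) placement of the intervals $(f_\alpha(\epsilon),g_\alpha(\epsilon))$, normality of $\ee$ to show the nesting holds $\ee$-almost everywhere, and Fodor on $\kappa^+$ to defeat any realization. The one point needing care is your ``least bad $\beta$'' function: $\beta$ ranges over ordinals below $\alpha<\kappa^+$, so to invoke normality of $\ee$ on $\kappa$ you must pass to the genuinely regressive map $\epsilon\mapsto\pi_\alpha(\beta)<\epsilon$ and stabilize the whole witnessing \emph{pair}, which is exactly the role of the paper's index-valued function $\mx$ in Step 3; your ending via the global chain-length-$<\kappa$ invariant is a clean substitute for the paper's condition (g) together with its second stationary-set stabilization.
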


\begin{proof}
The proof has several steps.

\step{Step 1: Fixing sequences of indices.}
For each $\alpha < \kappa^+$ choose $\overline{\uu}_\alpha = \langle u_{\alpha, \epsilon} : \epsilon < \kappa \rangle$ so that:

\begin{enumerate}
\item[(a)] this sequence is $\subseteq$-increasing and continuous, and for each $\alpha < \kappa^+$, $u_{\alpha, 0} = \emptyset$ 
\item[(b)] for each $\epsilon < \kappa$, $|u_{\alpha, \epsilon}| < \kappa$
\item[(c)] $\bigcup \{ u_{\alpha, \epsilon} : \epsilon < \kappa \} = \alpha$
\item[(d)] (for coherence) for $\beta < \alpha < \kappa^+$,
\[ \beta \in u_{\alpha, \epsilon} \implies u_{\beta, \epsilon} \subseteq u_{\alpha, \epsilon} \]
\end{enumerate}
Such a sequence will always exist as $|\alpha| \leq \kappa$.
[Details: Clearly such a sequence exists for $\alpha \leq \kappa$: let $u_{\kappa, \epsilon} = \epsilon \cap \alpha$, 
so for arbitrary $\kappa \leq \alpha < \kappa^+$,
fixing a bijection to $\kappa$ let $\overline{V}_\alpha = \langle v_{\alpha, \epsilon} : \epsilon < \kappa \rangle$  
be the preimage of the sequence for $\kappa$.
Having thus fixed, for each $\alpha < \kappa^+$, a sequence satisfying (a)-(c) we may then inductively pad these sequences to ensure
coherence. For $\beta = 0$ and each $\epsilon < \kappa$, let $u_{\beta, \epsilon} = v_{\beta, \epsilon}$.
For $0 < \beta < \kappa^+$, for each $\epsilon < \kappa$ let 
$u_{\beta, \epsilon} = \bigcup \{ u_{\alpha, \epsilon} : \alpha \in v_{\beta, \epsilon} \}$, and note that this will preserve
(b), (a), (c) and ensure (d).]

\step{Step 2: The inductive construction of the (pre-)cut.}
We now construct a cut. We will first describe the construction, and then show that it is in fact a cut (i.e. we will show that we have indeed
constructed a pre-cut, and that this pre-cut is not realized).

By induction on $\alpha < \kappa^+$ we will choose $f_\alpha, g_\alpha \in {^\kappa(L_{M_1})}$. The intention is that
each $u_{\alpha, \epsilon}$ represents a small set of prior functions which we take into account when choosing the values for
$f_\alpha, g_\alpha$ at the index $\epsilon \in \kappa$. 

At stage $\alpha$, for each index $\epsilon < \kappa$ define 
\begin{eqnarray*}
 w_{\alpha, \epsilon} = \{ \beta \in u_{\alpha, \epsilon} :&  
 \langle f_\gamma(\epsilon) : \gamma \in u_{\alpha, \epsilon} \cap (\beta + 1) \rangle ~\mbox{is $<_{L(M_1)}$-increasing}, \\
&  \langle g_\gamma(\epsilon) : \gamma \in u_{\alpha, \epsilon} \cap (\beta + 1) \rangle ~\mbox{is $<_{L(M_1)}$-decreasing}, \\
& \mbox{and}~ f_\beta(\epsilon) <_{L(M_1)} g_\beta(\epsilon) ~~\}   \\
\end{eqnarray*}

Our aims in defining $f_\alpha, g_\alpha$ are, on the one hand, to continue describing a pre-cut, and on the other, to stay as close to the
linearly ordered $w_{\alpha, \epsilon}$ as possible, as we now describe. That is, for fixed $\alpha$ for each $\epsilon$, we will
choose $f_\alpha, g_\alpha$ so that:

\begin{enumerate}
\item[(e)] For all $\beta \in w_{\alpha, \epsilon}$,
$M_1 \models f_{\beta}(\epsilon) < f_\alpha (\epsilon) < g_\alpha(\epsilon) < g_{\beta}(\epsilon)$

i.e. \emph{locally we continue the pre-cut described by $w_{\alpha, \epsilon}$.}
\br
\item[(f)] For all $\beta < \alpha$, neither $M_1 \models f_\beta(\epsilon) < f_\alpha(\epsilon) < g_\beta(\epsilon) < g_\alpha(\epsilon)$ nor
$M_1 \models f_\alpha(\epsilon) < f_\beta(\epsilon) < g_\alpha(\epsilon) < g_\beta(\epsilon)$

i.e. \emph{the intervals are either nested or disjoint.} 
\br
\item[(g)] If $\gamma \in \alpha$ and $f_\gamma(\epsilon), g_\gamma(\epsilon)$ satisfy:
\[ \beta \in w_{\alpha, \epsilon} \implies f_\beta(\epsilon) < f_\gamma(\epsilon) < g_\gamma(\epsilon) < g_\beta(\epsilon) \]
then the intervals $[f_\alpha(\epsilon), g_\alpha(\epsilon)]_{L(M_1)}$, 
$[f_\beta(\epsilon), g_\beta(\epsilon)]_{L(M_1)}$ are disjoint

i.e. \emph{inside the pre-cut given by $w_{\alpha, \epsilon}$ we avoid any further refinements: we realize \emph{exactly}
the intitial segment given by $w_{\alpha, \epsilon}$.}
\end{enumerate}

We will show in step 3 that by the hypothesis on $\kappa$, (a)-(g) imply the further condition that for each fixed $\alpha < \kappa^+$, 
\begin{enumerate}
\item[(h)] For all $\beta < \alpha$, $\{ \epsilon : f_{\beta}(\epsilon) < f_\alpha (\epsilon) < g_\alpha(\epsilon) < g_{\beta}(\epsilon) \} \in \ee$, 
i.e. \emph{the functions chosen will ultimately describe a pre-cut.}
\end{enumerate}

At each index $\epsilon < \kappa$, we may choose $f_\alpha(\epsilon), g_\alpha(\epsilon)$ to satisfy (e),(f),(g) simply because
$L_{M_1}$ is dense and $\lambda^+$-saturated; the definition of $w_{\alpha, \epsilon}$ ensures (e) describes a pre-cut; 
and since (f) is inductively satisfied, (g) is possible.

\step{Step 3: For $\beta < \alpha$, $f_\beta < f_\alpha < g_\alpha < g_\beta$.}
In this step we verify that for the objects constructed in the previous step, for each $\alpha < \kappa^+$ 
and all $\beta < \alpha$, 
\[ X_{\alpha, \beta} = \{ \epsilon < \kappa : 
\beta \in u_{\alpha, \epsilon}, f_{\beta}(\epsilon) < f_\alpha (\epsilon) < g_\alpha(\epsilon) < g_{\beta}(\epsilon) \} \in \ee \]

(By conditions (a)-(d) requiring $\beta \in u_{\alpha, \epsilon}$ does not affect membership in $\ee$.)
Suppose this is not the case, so let $\alpha < \kappa^+$ be minimal for which there is $\beta < \alpha$
with $X_{\alpha, \beta} \notin \ee$, and having fixed $\alpha$, let $\beta < \alpha$ be minimal such that $X_{\alpha, \beta} \notin \ee$. 
For the remainder of this step we fix this choice of $\alpha, \beta$. 

Since $\beta < \alpha$, by construction (that is, by (e),(f),(g) of Step 2)
\[ X_{\alpha, \beta} \subseteq \{ \epsilon < \kappa : \beta \in u_{\alpha, \epsilon} \setminus w_{\alpha, \epsilon} \} \]

Define a function $\mx: \kappa \rightarrow \kappa$ by
\begin{eqnarray*}
 t \mapsto \max \{ \epsilon \leq t :   
& \langle f_\gamma(t) : \gamma \in u_{\alpha, \epsilon} \rangle ~\mbox{is $<_{L(M_1)}$-increasing}, \\
&  \langle g_\gamma(t) : \gamma \in u_{\alpha, \epsilon}\rangle ~\mbox{is $<_{L(M_1)}$-decreasing}, \\
& \mbox{and $\gamma \in u_{\alpha, \epsilon} \implies$}~ f_\gamma(t) <_{L(M_1)} g_\gamma(t) ~~\}   \\
\end{eqnarray*}

This is well defined by Step 1, condition (a): $0$ belongs to the set on the righthand side, and by continuity,
there are no new conditions at limits. 

For each $\epsilon < \kappa$, the set $\{ t < \kappa : \mx(t) > \epsilon \} \in \ee$. This is because:
\begin{enumerate}
\item by (c) $|u_{\alpha, \epsilon}| < \kappa$ 
\item by minimality of $\alpha$, for any $\gamma < \gamma^\prime < \alpha$ (e.g. any two elements of $u_{\alpha, \epsilon}$) 
we have that $X_{\gamma, \gamma^\prime} \in \ee$
\item $\ee$ is $\kappa$-complete  
\item by (a) $\epsilon^\prime < \epsilon \implies u_{\alpha, \epsilon^\prime} \subseteq u_{\alpha, \epsilon}$
\end{enumerate}

Notice that for any $t < \kappa$, $\mx(t) = t$ implies $u_{\alpha, t} = w_{\alpha, t}$. 
So if $\mx(t) = t$ on an $\ee$-large set, $X_{\alpha, \beta} \in \ee$, which would finish the proof. Suppose, then, that 
$Y = \{ t < \kappa : \mx(t) < t \} \in \ee$. By normality (Fact \ref{normal-fodor}), there is $Z \subseteq Y$, $Z \in \ee$ 
on which $\mx(t) = \epsilon_*$ for some fixed $\epsilon_* < \kappa$. But this contradicts the first sentence of the previous paragraph. 

These contradictions prove that for no $\alpha, \beta$ can it happen that $X_{\alpha, \beta} \notin \ee$, which finishes the proof 
of Step 3.

\step{Step 4: The pre-cut is not realized, i.e. it is indeed a cut.}
In this step we assume, for a contradiction, that there is $h \in {^\kappa{M_1}}$ such that for each $\alpha < \kappa^+$
\[ f_\alpha/\ee <_L h/\ee <_L g_\alpha/\ee  \]
i.e. $h$ realizes the type. Fixing such an $h$, let 
\[ A_\alpha = \{ \epsilon < \kappa : f_\alpha(\epsilon) <_L h(\epsilon) <_L g_\alpha(\epsilon) \} \in \ee \]
Since to each $\alpha$ we may associate a choice of index in $A_\alpha$, by Fodor's lemma for some $\epsilon_* < \kappa$, 
\[ S_1 = \{ \delta : \delta < \kappa^+, ~ \cf(\delta) = \kappa, ~ \epsilon_* \in A_\delta \} \]
is stationary. Furthermore, since $|u_{\epsilon_*, \alpha}| < \kappa$, there is some $w_* \subseteq u_{\epsilon, \alpha}$ 
for which 
\[ S_2 = \{ \delta \in S_1 : w_{\epsilon_*, \delta} = w_* \} \subseteq \kappa^+ \]
is stationary. Let $\delta_* \in S_2$ be such that $|\delta_* \cap S_2| = \kappa$. As 
$|w_{\epsilon_*, \delta_*}| \leq |u_{\epsilon_*, \delta_*}| < \kappa$, we may choose 
$\gamma_* \in S_2 \cap \{ \delta_* \setminus w_{\epsilon_*, \delta_*} \}$.

Now $w_{\epsilon_*, \delta_*} = w_{\epsilon_*, \gamma_*} = w_*$ since $\delta_*, \gamma_* \in S_2$, and note $\gamma_* < \delta_*$.
The definition of the sets $w$ (here, $w_*$) 
and Step 3, condition (e) means that when choosing $f_{\delta_*}(\epsilon_*), g_{\delta_*}(\epsilon_*)$
we would have ensured that 
\[ \beta \in w_* \implies f_{\beta}(\epsilon_*) < f_{\delta_*}(\epsilon_*) < g_{\delta_*}(\epsilon_*)
< g_{\beta}(\epsilon_*) \]
and likewise that
\[ \beta \in w_* \implies f_{\beta}(\epsilon_*) < f_{\gamma_*}(\epsilon_*) < g_{\gamma_*}(\epsilon_*)
< g_{\beta}(\epsilon_*) \]
On the other hand, $\gamma_* < \delta_*$, and $\gamma_* \notin w_*$. So when choosing $f_{\gamma_*}(\epsilon),
g_{\gamma_*}(\epsilon)$, Step 3, condition (g) would have meant we chose the intervals
$[f_{\delta_*}(\epsilon_*), g_{\delta_*}(\epsilon_*)]_{L(M_1)}$, 
$[f_{\gamma_*}(\epsilon_*), g_{\gamma_*}(\epsilon_*)]_{L(M_1)}$ to be disjoint. 

But we also know that $\gamma_*, \delta_* \in S_1$, so $h(\epsilon_*)$ must belong to both intervals. 
This contradiction completes Step 4 and the proof.
\end{proof}

\begin{rmk}
We know that if $D$ is any ultrafilter on $\kappa$ and $M$ is a model whose theory is not simple, then $M^\kappa/\de$ is not
$\kappa^{++}$-saturated. Still, Claim \ref{kappa-plus-cut} gives more precise information about the size of the cut:
we are guaranteed a cut of type $(\kappa^+, \kappa^+)$ as opposed to 
e.g. $(\kappa^+, \kappa)$. On the importance of symmetric cuts, see \cite{treetops}.
\end{rmk}

\begin{claim}
Assume $\kappa$ measurable, $\ee$ a $\kappa$-complete filter on $\kappa$, 
$\lambda \geq \kappa$,
$M_1$ a $\lambda$-saturated model with ${(L_M, <_M)}$ a dense linear order. Let $M_2 = M_1^\kappa/\ee$. Then
\end{claim}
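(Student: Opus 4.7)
My plan mirrors the proof of Claim \ref{complete-cuts1}(a) but uses $\lambda$-saturation of $M_1$ in place of mere $\kappa^+$-saturation, so that pre-cuts whose size is jointly controlled by the completeness of $\ee$ and by $\lambda$ can be realized in $L_{M_2}$. Suppose the conclusion concerns a candidate pre-cut $(\langle f_\alpha/\ee : \alpha < \theta_1 \rangle, \langle g_\beta/\ee : \beta < \theta_2 \rangle)$ in $L_{M_2}$. For each $(\alpha,\beta) \in \theta_1 \times \theta_2$, together with the corresponding monotonicity pairs, the \Los{} witness set
\[ X_{\alpha,\beta} = \{ t < \kappa : M_1 \models f_\alpha(t) <_L g_\beta(t) \} \]
belongs to $\ee$.

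When $\theta_1 + \theta_2 < \kappa$, I would invoke $\kappa$-completeness to intersect all such sets (witnesses plus monotonicity) into a single $X \in \ee$. For each $t \in X$, the finite-and-small assemblage $\{f_\alpha(t) : \alpha < \theta_1\} \cup \{g_\beta(t) : \beta < \theta_2\}$ is a genuine pre-cut in $L_{M_1}$ of size $< \kappa \leq \lambda$, and the $\lambda$-saturation of $M_1$ furnishes an element $h(t) \in L_{M_1}$ realizing it. Defining $h$ arbitrarily off $X$, \Los{} gives $h/\ee$ as a realization of the original pre-cut in $L_{M_2}$. This already delivers the natural generalization of \ref{complete-cuts1}(a): no $(\theta_1,\theta_2)$-cut with $\theta_1 + \theta_2 < \kappa$, regardless of whether $\ee$ is also $\kappa^+$-complete.

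If the stated conclusion reaches further — for instance that $L_{M_2}$ omits no $(\theta_1,\theta_2)$-pre-cut with $\min(\theta_1,\theta_2) < \kappa$ and $\max(\theta_1,\theta_2) \leq \lambda$ — the plan is to reduce to the previous paragraph by the standard trick for one-sided long pre-cuts: fix $\theta_1 < \kappa$, and for each $t$ let $h(t) \in L_{M_1}$ realize $\{f_\alpha(t) < x : \alpha < \theta_1\} \cup \{x < g_\beta(t) : \beta \text{ in a $<\kappa$-cofinal approximation to the long side}\}$. The $\lambda$-saturation of $M_1$ handles the long-side portion coordinatewise (since at each index it is a $<\lambda$-sized type), while $\kappa$-completeness lets us intersect the $\theta_1 < \kappa$ witness sets on the short side so that the resulting $h/\ee$ lies above each $f_\alpha/\ee$ almost everywhere.

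The main obstacle is precisely the case where both sides of the pre-cut have cofinality $\geq \kappa$: then $\kappa$-completeness no longer permits intersecting all the witness sets, and without the normality used in Claim \ref{n19} and \ref{kappa-plus-cut} we cannot appeal to Fodor's lemma to bound the inductive data at each coordinate. For this reason I expect the intended conclusion to be of the asymmetric form above (one side strictly below $\kappa$, the other side up to $\lambda$), in which case the two-step plan — intersect on the small side using $\kappa$-completeness, then realize pointwise using $\lambda$-saturation of $M_1$ — is exactly what is needed, and the hypothesis ``$\lambda \geq \kappa$ with $M_1$ $\lambda$-saturated'' is used in full.
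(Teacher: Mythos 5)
The claim as printed breaks off after ``Then,'' and you have correctly reverse-engineered both the intended conclusion and the intended division of labor: the paper's proof shows that $L_{M_2}$ has no $(\theta,\sigma)$-cut when $\theta<\kappa$ (the side handled by $\kappa$-completeness) and $\sigma<\lambda$ (the side handled by $\lambda$-saturation of $M_1$), and your two-step plan --- intersect on the short side, realize pointwise on the long side --- is exactly the paper's strategy. Note one small calibration: with $M_1$ only $\lambda$-saturated the long side must satisfy $\sigma<\lambda$, not $\sigma\leq\lambda$ as in your proposed statement.

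However, the way you assemble the pointwise type has a genuine gap. A ``$<\kappa$-cofinal approximation to the long side'' does not exist when $\operatorname{cf}(\sigma)\geq\kappa$, and if instead you impose $x<_L g_\beta(t)$ for \emph{every} $\beta<\sigma$ at every index $t$, the local type can be inconsistent: at a given $t$ there is no reason why each $g_\beta(t)$ should lie above each $f_\alpha(t)$, since these inequalities only hold $\ee$-almost everywhere. The paper's device, which your write-up circles around but does not state, is to apply $\kappa$-completeness \emph{per $\beta$ on the long side, over the short side}: for each $\beta<\sigma$ set
\[ A_\beta := \bigcap \bigl\{ \{ t : M_1 \models f_\alpha(t) <_L g_\beta(t) \} : \alpha < \theta \bigr\} \in \ee , \]
which lies in $\ee$ precisely because $\theta<\kappa$. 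At index $t$ one then realizes the type $\{ f_\alpha(t) <_L x : \alpha<\theta \} \cup \{ x <_L g_\beta(t) : t \in A_\beta \}$; this is finitely satisfiable by the definition of $A_\beta$ and density, has $<\lambda$ formulas since $\theta+\sigma<\lambda$, and so is realized by some $h(t)$ using $\lambda$-saturation. Then $h/\ee$ lies above every $f_\alpha/\ee$ (the lower constraints hold at every index) and below every $g_\beta/\ee$ (the upper constraint for $\beta$ holds on $A_\beta\in\ee$), so \lost theorem fills the alleged cut. Your summary sentence attributes the $\kappa$-completeness step to making $h/\ee$ lie \emph{above} the $f_\alpha/\ee$, which needs no such argument; its real role is to make the $\beta$-th upper constraint compatible with the entire lower half on an $\ee$-large set of indices. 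With that correction your outline becomes the paper's proof.
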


\begin{proof}
Suppose for a contradiction that there were such a cut given by
$\langle f_\alpha : \alpha < \theta \rangle$, $\langle g_\beta : \beta < \sigma \rangle$ with
$\alpha_1, \alpha_2 < \theta, \beta_1, \beta_2 < \sigma \implies 
\{ t : M_1 \models f_{\alpha_1}(t) <_L f_{\alpha_2}(t) <_L g_{\alpha_2}(t) <_L g_{\alpha_1}(t) \} \in \ee$.
 Expand the language to add constants $\{ c_\alpha : \alpha < \theta \}$ where in the $t$th copy of the index model $M_1$,
 denoted $M_1[t]$,
 $c_\alpha$ is interpreted as $f_\alpha(t)$. Then in the ultrapower (which in the expanded language is an ultraproduct), 
 $\langle c_\alpha : \alpha < \theta \rangle$ forms the lower half of the supposed cut. For each $\beta < \sigma$, the set
 \[  A_\beta := \bigcap \{ \{ t ~:~ M_1[t] \models c_\alpha <_L g_{\beta}[t] \}  : \alpha < \theta \}  \in \ee \]
by $\kappa$-completeness. 

But recall that $M_1$ is a $\lambda$-saturated model, and $\sigma < \lambda$. Since for each $t$, we have
\[ | \{ \beta < \sigma : t \in A_\beta \} | \leq \sigma < \lambda \]
we may choose a new element $h \in {^IM_1}$ so that for each $t$, $h(t)$ satisfies
$\alpha < \theta \implies M_1[t] \models c_\alpha <_L h(t)$ and $t \in A_\beta \implies M_1[t] \models h(t) <_L g_\beta(t)$. 
By \lost theorem $h$ realizes our cut, which is the desired contradiction. 
\end{proof}

In a forthcoming paper the authors have shown that: 

\begin{thm-lit} \emph{(Malliaris and Shelah \cite{treetops})} \label{treetops-thm-b}
If $\de$ is a regular ultrafilter on $\lambda$ which saturates some theory with $SOP_2$, and
$M$ is a model of linear order, then among other things: 
\begin{enumerate}
\item for all $\mu \leq \lambda$ $M^\lambda/\de$ has no $(\mu, \mu)$-cut,
\item for all $\mu \leq \lambda$ there is at most one $\rho \leq \lambda$ such that $M^\lambda/\de$ has
a $(\mu, \rho)$-cut
\end{enumerate}
\end{thm-lit}

\begin{concl} \label{flex-not-good-tp2}
Let $\kappa < \lambda$ and suppose $\kappa$ is measurable. Then there exists a regular ultrafilter $\de$ on $I$, $|I| = \lambda$
which is flexible but not good, specifically not good for any theory with $SOP_2$.
\end{concl}

\begin{proof}
Let $D$ be a $\lambda^+$-good, $\lambda$-regular ultrafilter on $\lambda$. Let $\ee$ be a normal $\kappa$-complete, not 
$\kappa^+$-complete ultrafilter on $\kappa$. Let $\de = D \times \ee$ be the product ultrafilter. 
Then $\de$ is flexible by Claim \ref{reg-transfer}. On the other hand, by Claim \ref{kappa-plus-cut},
any $\de$-ultrapower of linear order will omit a $(\kappa^+, \kappa^+)$-cut. By Theorem \ref{treetops-thm-b},
$\de$ cannot saturate any theory with $SOP_2$.
\end{proof}

\begin{rmk}
On one hand, the advantage of Conclusion \ref{flex-not-good-tp2} over Theorem \ref{flexible-not-good-2} is in the greater range of cardinals:
we ask only that $\kappa < \lambda$, not $2^\kappa \leq \lambda$. On the other hand, Theorem \ref{flexible-not-good-2}
gives an a priori stronger failure of goodness, since the random graph is minimum among unstable theories in Keisler's order.
\end{rmk}

\section{Finite alternations of symmetric cuts} \label{s:finite-alt}

In this section we iterate the results of \S \ref{s:kappa-cuts}, \S \ref{s:kappa-plus-cuts} to produce regular ultrafilters $\de$ whose
library of cuts, $\mc(\de)$, contains any fixed finite number of alternations (or gaps).
The following definition is stated for regular ultrafilters only so that the choice of index model will not matter.

\begin{defn} \label{defn-alt-cuts}
Let $\kappa$ be a cardinal. Say that the regular ultrafilter $\de$ on $\lambda \geq \aleph_0$ has \emph{$\kappa$ alternations of cuts}
if there exist cardinals $\langle \mu_\ell : \ell < \kappa \rangle$, $\langle \rho_\ell : \ell < \kappa \rangle$ such that:
\begin{itemize}
\item $\ell_1 < \ell_2 < \kappa \implies \aleph_0 < \rho_{\ell_1} < \mu_{\ell_1} < \rho_{\ell_2} < \mu_{\ell_2} < \lambda$
\item for each $0 \leq \ell < \kappa$, $(\rho_\ell, \rho_\ell) \in \mc(\de)$, i.e.
$(\mathbb{N}, <)^\lambda/\de$ has some $(\rho_\ell, \rho_\ell)$-cut
\item for each $0 \leq \ell < \kappa $, $(\mu_\ell, \mu_\ell) \notin \mc(\de)$, i.e.
$(\mathbb{N}, <)^\lambda/\de$ has no $(\mu_\ell, \mu_\ell)$-cut
\end{itemize}
\end{defn}

We will start by proving a theorem for products of complete ultrafilters, Theorem \ref{alt-cuts},
and then extend it to regular ones in Theorem \ref{alt-regular} by adding one more iteration of the ultrapower.

First we observe that taking ultrapowers will not fill symmetric cuts whose cofinality is larger than the size of the index set.

\begin{obs} \label{not-filling-cuts}
Suppose $M$ is a $\lambda$-saturated model of linear order, $\kappa < \lambda$, $D$ an ultrafilter on $\kappa$.
If $M$ contains a $(\kappa_*, \kappa_*)$-cut, where $\kappa_* = \cf(\kappa_*) > \kappa$, 
then $M^\kappa/D$ will also contain a $(\kappa_*, \kappa_*)$-cut. More precisely, the image of the cut from 
$M$ under the diagonal embedding will remain unrealized in $M^\kappa/D$.
\end{obs}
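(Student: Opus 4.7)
The plan is a short direct contradiction argument that uses only the regularity hypothesis $\cf(\kappa_*) > \kappa$, \Lost theorem, and the ultrafilter property; the $\lambda$-saturation of $M$ is not actually invoked in the proof, and presumably serves only to guarantee that the hypothesis ``$M$ contains a $(\kappa_*, \kappa_*)$-cut'' is nonvacuous in the intended applications. Let $\langle a_\alpha : \alpha < \kappa_* \rangle$ and $\langle b_\alpha : \alpha < \kappa_* \rangle$ witness the $(\kappa_*, \kappa_*)$-cut in $M$: the $a_\alpha$ are strictly increasing, the $b_\alpha$ are strictly decreasing, $a_\alpha < b_\beta$ for all $\alpha, \beta$, and the type $\{a_\alpha < x < b_\beta : \alpha, \beta < \kappa_*\}$ is omitted in $M$. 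Suppose, for contradiction, that there is $h \in {}^\kappa M$ such that $h/D$ realizes the image of this type under the diagonal embedding into $N := M^\kappa/D$.

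The first step is to localize the omission pointwise in the index set. For each $t \in \kappa$, the element $h(t) \in M$ fails to realize the cut in $M$, so there is a least ordinal $\gamma(t) < \kappa_*$ with $M \models h(t) \leq a_{\gamma(t)}$ or $M \models h(t) \geq b_{\gamma(t)}$. This defines a function $\gamma : \kappa \to \kappa_*$.

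The second step is the only place the cardinal arithmetic enters. Since $\cf(\kappa_*) > \kappa$, the range of $\gamma$ is bounded in $\kappa_*$, so we may fix some $\gamma^* < \kappa_*$ with $\gamma(t) < \gamma^*$ for every $t \in \kappa$. By monotonicity of the two sequences, for every index $t$ we have $M \models h(t) \leq a_{\gamma^*}$ or $M \models h(t) \geq b_{\gamma^*}$. Hence
\[ \kappa \;=\; \{t : M \models h(t) \leq a_{\gamma^*}\} \,\cup\, \{t : M \models h(t) \geq b_{\gamma^*}\}, \]
and since $D$ is an ultrafilter at least one of these two sets lies in $D$.

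The third step applies \Lost theorem to the chosen set. In either case we conclude that $N \models h/D \leq a_{\gamma^*}$ or $N \models h/D \geq b_{\gamma^*}$ (the constants $a_{\gamma^*}, b_{\gamma^*}$ being identified with their diagonal images), which directly contradicts the assumption that $a_\alpha < h/D < b_\alpha$ holds in $N$ for every $\alpha < \kappa_*$. There is no genuine obstacle: the argument is essentially a one-page application of the pigeonhole guaranteed by $\cf(\kappa_*) > \kappa$, combined with the basic fact that what is omitted ``coordinatewise by a bounded amount'' remains omitted in the ultrapower.
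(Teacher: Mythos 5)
Your proof is correct, and it runs in the opposite direction from the paper's. The paper assigns to each ordinal $\alpha < \kappa_*$ an index $\epsilon(\alpha) < \kappa$ at which the hypothetical realization $h$ sits inside the $\alpha$-th interval, uses $\cf(\kappa_*) > \kappa$ to find a single index $\epsilon_*$ working for an unbounded set of $\alpha$'s (the paper phrases this as Fodor's lemma, though it is really just the pigeonhole on a function with range of size $\kappa < \cf(\kappa_*)$), and then derives the contradiction \emph{downstairs}: $h(\epsilon_*)$ would realize a cofinal subfamily of the cut in $M$ itself. You instead assign to each index $t < \kappa$ the least ordinal $\gamma(t) < \kappa_*$ at which $h(t)$ escapes the cut in $M$, use $\cf(\kappa_*) > \kappa$ to bound these ordinals by a single $\gamma^* < \kappa_*$, and derive the contradiction \emph{upstairs} via \lost theorem: $h/D$ lies outside $(a_{\gamma^*}, b_{\gamma^*})$ in the ultrapower. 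Both arguments use exactly the same hypothesis in the same essential way, and both are one short paragraph; your version has the mild advantages of not invoking Fodor's lemma for what is really a cardinality bound, of applying \lost theorem only to a single pair of (diagonal) parameters rather than reasoning about cofinal subfamilies of the cut, and of making explicit that the $\lambda$-saturation of $M$ plays no role in the proof — which is also true of the paper's argument, where saturation is carried along only for the ambient construction in which the observation is used.
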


\begin{proof}
Let the cut in $M$ be given by $(\langle f_\alpha : \alpha < \kappa_* \rangle, \langle g_\beta : \beta < \kappa_* \rangle)$,
and we consider the pre-cut given by $(\langle f_\alpha/D : \alpha < \kappa_* \rangle, \langle g_\beta/D : \beta < \kappa_* \rangle)$
in the ultrapower $M^\kappa/D$.
Suppose for a contradiction that there were a realization $h \in {^\kappa M}$. Let $\bx : \kappa_* \rightarrow \kappa$ be
a function which to each $\alpha < \kappa_*$ associates some index $\epsilon < \kappa$ for which 
$M \models f_\alpha(\epsilon) < h(\epsilon) < g_\alpha(\epsilon)$. By Fodor's lemma, there is a stationary subset $X \subseteq \kappa_*$
on which $\bx$ is constant and equal to, say, $\epsilon_*$. Then in $M$,
$(\langle f_\alpha(\epsilon_*) : \alpha \in X \rangle, \langle g_\beta(\epsilon_*): \beta \in X \rangle)$ will be cofinal
in the original cut, but by choice of $X$ it will be realized by $h(\epsilon_*)$, contradiction.
\end{proof}

Since the proof of Theorem \ref{alt-cuts} involves an inductive construction, it will be convenient to index the cardinals $\kappa_\ell$
in reverse order of size.

\begin{theorem} \label{alt-cuts} 
Suppose that we are given: 
\begin{enumerate}
\item[(a)] $n<\omega$ and $\kappa_{n} < \dots < \kappa_0 <  \kappa_{-1} = \lambda$
\item[(b)] $\ee_\ell$ a normal $\kappa_\ell$-complete ultrafilter on $\kappa_\ell$, for $\ell \leq n$. 
\item[(c)] $M_0$ a $\lambda$-saturated model which is, or contains, a dense linear order $<$
\item[(d)] $M_{\ell + 1} = (M_\ell)^{\kappa_\ell}/\ee_\ell$ for $\ell \leq n$ 
\end{enumerate}
Then:
\begin{enumerate}
\item[($\alpha$)] for $\ell \leq n$, $M_{\ell + 1}$ is $\kappa_\ell$-saturated
\item[($\beta$)] if $\ell < k \leq n+1$ then $M_k$ has a $({\kappa_\ell}^+, {\kappa_\ell}^+)$-cut
\item[($\gamma$)] for $i, \ell \leq n+1$, $M_\ell$ has \emph{no} $(\kappa_i, \kappa_i)$-cut 
\item[($\delta$)] for $\ell \leq n+1$, $M_\ell$ has no $(\theta, \theta)$-cut for $\theta < \lambda$ weakly compact
\end{enumerate}
Thus, for each $\ell \leq n$, 
$({\kappa_\ell}^+, {\kappa_\ell}^+) \in \mc(M_{n+1})$, and for any weakly compact $\theta < \lambda$, in particular $\theta = \kappa_\ell$,
$(\theta, \theta) \notin \mc(M_{n+1})$.
\end{theorem}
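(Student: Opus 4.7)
The plan is to induct on $\ell \in \{0, 1, \dots, n\}$, proving that each of $(\alpha)$--$(\delta)$ passes from $M_\ell$ to $M_{\ell+1}$. Note that $(\gamma)$ is the special case $\theta = \kappa_i$ of $(\delta)$, since every measurable cardinal is weakly compact, so it suffices to establish $(\alpha)$, $(\beta)$, and $(\delta)$. The base case $\ell = 0$ is immediate from the hypothesis that $M_0$ is $\lambda$-saturated: this gives $M_0$ no $(\theta,\theta)$-cut for any regular $\theta < \lambda$, so $(\delta)$ holds trivially and $(\beta)$ is vacuous.

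For $(\alpha)$, a type of cardinality less than $\kappa_\ell$ over $M_{\ell+1}$ has fewer than $\kappa_\ell$ finite subtypes, and the \Lost set of each lies in $\ee_\ell$; by the $\kappa_\ell$-completeness of $\ee_\ell$ one may intersect these into a set still in $\ee_\ell$, and the inductively given $\kappa_\ell$-saturation of $M_\ell$ supplies pointwise realizations, from which \Lost theorem produces the desired element. For $(\beta)$ with $k = \ell + 1$, Claim \ref{kappa-plus-cut} applies directly to the normal $\kappa_\ell$-complete $\ee_\ell$ on the $\kappa_\ell$-saturated base $M_\ell$; the step from $k$ to $k+1$ follows from Observation \ref{not-filling-cuts}, since $\cf(\kappa_\ell^+) = \kappa_\ell^+ > \kappa_k$ for every $k > \ell$, so the cut persists into each subsequent ultrapower.

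The main work lies in $(\delta)$, which I split on the size of $\theta$ relative to $\kappa_\ell$. If $\theta \leq \kappa_\ell$, then since $\ee_\ell$ is a uniform ultrafilter on $\kappa_\ell$ it is $\kappa_\ell$-complete but not $\kappa_\ell^+$-complete, so Claim \ref{complete-cuts1} forbids a $(\theta,\theta)$-cut in $M_{\ell+1}$ outright. The serious case is $\theta > \kappa_\ell$, where I exploit weak compactness of $\theta$. Supposing for contradiction that $(\langle F_\alpha/\ee_\ell : \alpha < \theta\rangle, \langle G_\beta/\ee_\ell : \beta < \theta\rangle)$ is a $(\theta,\theta)$-pre-cut in $M_{\ell+1}$, I color each $\{\alpha,\beta\} \in [\theta]^2$ with $\alpha < \beta$ by the function sending $t \in \kappa_\ell$ to the order-pattern of $F_\alpha(t), F_\beta(t), G_\alpha(t), G_\beta(t)$ in $M_\ell$; the palette sits inside a set of size at most $2^{\kappa_\ell} < \theta$ by strong inaccessibility of $\theta$. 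Fact \ref{wcc-fact} then produces a homogeneous $\uu \subseteq \theta$ of size $\theta$. Homogeneity together with the pre-cut condition in the ultrapower forces the set $T$ of indices $t$ at which the pointwise sequence restricted to $\uu$ is a genuine pre-cut to lie in $\ee_\ell$; by the inductive $(\delta)$ applied inside $M_\ell$, for each $t \in T$ this $(\theta,\theta)$-pre-cut is realized. Choosing such realizations pointwise on $T$ and arbitrarily elsewhere defines $h \in M_\ell^{\kappa_\ell}$, and since $\uu$ is cofinal in $\theta$ by regularity, $h/\ee_\ell$ realizes the full original pre-cut, contradicting the assumption that it was a cut.

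The main obstacle I anticipate is the combinatorial setup in the $\theta > \kappa_\ell$ case of $(\delta)$: one must package all required order relations --- strict increase of the $F_\alpha$'s, strict decrease of the $G_\beta$'s, and $F_\alpha < G_\beta$ --- into a single coloring whose palette is genuinely of size $<\theta$, and then verify that homogeneity together with \Lost theorem really yields a pointwise $(\theta,\theta)$-pre-cut on an $\ee_\ell$-large set of indices, rather than a degenerate or truncated sequence, so that the inductive hypothesis on $M_\ell$ can be invoked cleanly. Once this bookkeeping is arranged, the reductions to Claims \ref{complete-cuts1} and \ref{kappa-plus-cut} and Observation \ref{not-filling-cuts} are essentially mechanical.
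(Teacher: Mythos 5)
Your proposal is correct and follows essentially the same route as the paper's proof: $(\alpha)$ by completeness of $\ee_\ell$ plus \Lost theorem, $(\beta)$ by Claim \ref{kappa-plus-cut} together with Observation \ref{not-filling-cuts}, $(\gamma)$ as a special case of $(\delta)$, and $(\delta)$ by splitting on $\theta$ versus $\kappa_\ell$ and using weak compactness to homogenize the pointwise order-patterns, then realizing the resulting pointwise pre-cuts via the inductive hypothesis on $M_\ell$. The only (harmless) cosmetic differences are that you route the case $\theta < \kappa_\ell$ through the ``moreover'' clause of Claim \ref{complete-cuts1} rather than through $(\alpha)$, and you color pairs by the full order-pattern function rather than by the set $A_{\alpha,\beta}$ of correctly ordered indices.
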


\begin{proof}
The ``thus'' clause summarizes ($\alpha$)-($\delta$). Recall that for transparency all languages are countable.
Note that condition (b) implies the cardinals $\kappa_\ell$ are measurable cardinals, thus limit cardinals, so
condition ($\gamma$) can never contradict condition ($\beta$).

($\alpha$) By induction on $-1 \leq \ell \leq n$ we verify that $M_{\ell + 1}$ is $\kappa_\ell$-saturated.
For $\ell = -1$, $M_0$ is $\lambda$-saturated and $\lambda > \kappa_0$. For $\ell > -1$, use the fact that
$M_{\ell+1} = ({M_\ell})^{\kappa_\ell}/\ee_\ell$ thus $M_{\ell+1} \equiv_{L_{\infty, \kappa_\ell}} M_\ell$
by \lost theorem for $L_{\kappa_i, \kappa_i}$. 

($\beta$) By Claim \ref{kappa-plus-cut} and Observation \ref{not-filling-cuts}. 

($\gamma$) Follows from ($\delta$) as measurable implies weakly compact. 

($\delta$) We prove this by induction on $\ell \leq n$. For $\ell = -1$, $M_0$ is $\lambda$-saturated.
For $\ell > -1$, let $\theta < \lambda$ be given and suppose we have a pre-cut in $M_{\ell+1}$
given by $(\langle f_\alpha : \alpha < \theta \rangle, \langle g_\beta : \beta < \theta \rangle)$.
There are three cases. If $\theta < \kappa_\ell$, then use ($\alpha$). If $\theta = \kappa_\ell$, use
Claim \ref{complete-cuts1}. So we may assume $\kappa_\ell < \theta$. Since $\theta$ is weakly compact, therefore inaccessible,
${2^{\kappa_\ell}} < \theta$. For $\alpha < \beta < \theta$ let 
\[ A_{\alpha, \beta} = \{ \epsilon < \kappa_\ell : f_\alpha(\epsilon) < f_\beta(\epsilon) < g_\beta(\epsilon) < g_\alpha(\epsilon) \} \in \ee_\ell \]
As $\theta$ is weakly compact, by Fact \ref{wcc-fact} the function $\bx : \theta \times \theta \rightarrow {2^{\kappa_\ell}} < \theta$
is constant on some $\uu \in [\theta]^{\theta}$. Call this constant value $A_*$. Now for $\epsilon \in A_*$, the sequence
$(\langle f_\alpha(\epsilon) : \alpha \in \uu \rangle, \langle g_\beta(\epsilon) : \beta \in \uu \rangle)$ is a pre-cut 
in $M_\ell$, meaning that $\alpha < \beta \in \uu \implies f_\alpha(\epsilon) < f_\beta(\epsilon) < g_\beta(\epsilon) < g_\alpha(\epsilon)$.

Let $B_* = \{ \epsilon \in A_* : ~\mbox{in $M_\ell$ there is $c$ such that $\alpha \in \uu \implies
f_\alpha(\epsilon) <_{M_\ell} c <_{M_\ell} g_\alpha(\epsilon)$} \}$. Now if
$A_* \setminus B_* \neq \emptyset$, for any $\epsilon \in A_* \setminus B_*$ we have that
$(\langle f_\alpha(\epsilon) : \alpha \in \uu \rangle, \langle g_\beta(\epsilon) : \beta \in \uu \rangle)$ 
is not just a pre-cut but also a cut in $M_\ell$, contradicting the inductive hypothesis. 
Thus for every $\epsilon \in A_*$ we may choose a realization $c(\epsilon)$ of the relevant pre-cut.
For $\epsilon \in \kappa \setminus A_*$, let $c(\epsilon)$ be arbitrary. 
Then $\langle c(\epsilon) : \epsilon < \kappa_\ell\rangle/\ee_\ell \in M_{\ell+1}$ realizes
$(\langle f_\alpha : \alpha < \theta \rangle, \langle g_\beta : \beta < \theta \rangle)$, as desired.
\end{proof}

By appending the construction of Theorem \ref{alt-cuts} to a suitable regular ultrafilter, we may produce
regular ultrapowers with $n$ alternations of cuts for any finite $n$.

\begin{theorem} \label{alt-regular}
Let $\lambda$ be an infinite cardinal, $n<\omega$ and suppose that there exist measurable cardinals
$\kappa_n < \dots < \kappa_0 <\lambda$.
Then there is a regular ultrafilter $\de$ on $I$, $|I| = \lambda$ such that:
\begin{enumerate}
\item $({\kappa_\ell}^+, {\kappa_\ell}^+) \in \mc(\de)$ for $\ell \leq n$
\item $(\theta, \theta) \notin \mc(\de)$ for $\theta < \lambda$ weakly compact, in particular $\ell \leq n$, $\theta = \kappa_\ell$
\item $\de$ is ${\kappa_n}^+$-good
\item $\de$ is $\lambda$-flexible 
\item $\de$ is $\lambda^+$-good for countable stable theories
\item $\de$ is not $(2^{\kappa_n})^+$-good for unstable theories
\end{enumerate}
\end{theorem}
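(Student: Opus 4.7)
The plan is to prepend one further iteration to the construction of Theorem \ref{alt-cuts}, namely by a regular good ultrafilter on $\lambda$, and then read off the properties from the product structure. Let $D$ be a regular $\lambda^+$-good ultrafilter on $\lambda$ (which exists in ZFC by Keisler's classical construction). For each $\ell \leq n$, since $\kappa_\ell$ is measurable, fix a normal $\kappa_\ell$-complete uniform ultrafilter $\ee_\ell$ on $\kappa_\ell$ as in Fact \ref{normal-fodor}. Define
\[ \de = D \times \ee_0 \times \ee_1 \times \dots \times \ee_n, \]
a product ultrafilter (in the sense of Definition \ref{d:prod}, iterated) on the index set $I = \lambda \times \kappa_0 \times \dots \times \kappa_n$, which has cardinality $\lambda$ since each $\kappa_\ell < \lambda$. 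The ultrapower $M^I/\de$ is canonically isomorphic to the iterated ultrapower $M_{n+1}$ obtained by setting $M_{-1} = M$, $M_0 = M^\lambda/D$, and $M_{\ell+1} = (M_\ell)^{\kappa_\ell}/\ee_\ell$ for $\ell \leq n$.

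Next I would verify the properties in turn. Regularity (needed implicitly for the statement) follows because any $\lambda$-regularizing family for $D$ pulls back along the projection $I \to \lambda$ to a $\lambda$-regularizing family in $\de$. For (1) and (2), start with a countable $M$ containing a dense linear order; since $D$ is $\lambda^+$-good, $M_0$ is $\lambda^+$-saturated, so in particular $\lambda$-saturated, and the hypotheses of Theorem \ref{alt-cuts} are met by $(M_0, \ee_0, \dots, \ee_n)$. Conclusions $(\beta)$ and $(\delta)$ of that theorem then yield (1) and (2) respectively. For (4) and (5), apply Corollary \ref{reg-transfer} inductively along the product: $D$ is $\lambda$-flexible with $\lcf(\aleph_0, D) \geq \lambda^+$ (both consequences of $\lambda^+$-goodness, via Theorems \ref{formula-corr} and \ref{theory-corr}), and each $\ee_\ell$ is $\aleph_1$-complete, so flexibility and large lower cofinality are transferred at every step; combined with regularity, this gives $\lambda^+$-saturation of all countable stable theories by Theorem \ref{theory-corr}(1).

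For (3), note that $D$ is $\lambda^+$-good hence $\kappa_n^+$-good, and each $\kappa_\ell$-complete ultrafilter $\ee_\ell$ is $\kappa_\ell^+$-good (any monotone $f : \fss(\mu) \to \ee_\ell$ with $\mu < \kappa_\ell$ has the obvious multiplicative refinement $f'(u) = \bigcap_{v \subseteq u} f(v)$ by $\kappa_\ell$-completeness), hence in particular $\kappa_n^+$-good; $\kappa_n^+$-goodness of the product $\de$ then follows by a direct computation unwinding Definition \ref{d:prod}. For (6), I claim $M_n$ is $\kappa_n^+$-saturated: $M_0$ is $\lambda^+$-saturated, and each subsequent $\ee_\ell$ with $\ell < n$ is $\kappa_\ell$-complete with $\kappa_\ell > \kappa_n$, so by \L os' theorem for $L_{\kappa_n^+, \kappa_n^+}$ the saturation level $\kappa_n^+$ is preserved at each step. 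Then $M_{n+1} = (M_n)^{\kappa_n}/\ee_n$ with $\ee_n$ a $\kappa_n$-complete uniform ultrafilter on $\kappa_n$, so Claim \ref{random-graph-saturation} (applicable by the observation following it about the complete case) says $M_{n+1}$ is not $(2^{\kappa_n})^+$-saturated when $M$ models the random graph. Since the random graph is minimum among unstable theories in Keisler's order, $\de$ is not $(2^{\kappa_n})^+$-good for any unstable theory.

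The main obstacle is bookkeeping rather than a single hard argument: one must carefully verify that each of the six properties behaves correctly under the product of one regular and $n+1$ complete ultrafilters, and in particular that the intermediate saturation level $\kappa_n^+$ genuinely survives all $n$ ultrapowers preceding the final $\ee_n$-step, so that Claim \ref{random-graph-saturation} applies cleanly at the end. Once this is in hand, each of (1)--(6) follows from a result already established in the paper.
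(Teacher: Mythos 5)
Your construction and verification coincide with the paper's own proof: the paper likewise takes $\de = \de_1 \times \ee_0 \times \dots \times \ee_n$ with $\de_1$ a $\lambda$-regular $\lambda^+$-good ultrafilter, gets (1)--(2) from Theorem \ref{alt-cuts} applied with $M_0$ the $\de_1$-ultrapower, (4)--(5) by induction via Corollary \ref{reg-transfer}, (3) from goodness of complete ultrafilters, and (6) from Claim \ref{random-graph-saturation}. One small correction to your step (3): for monotone $f$ the formula $f'(u)=\bigcap_{v\subseteq u}f(v)$ just equals $f(u)$ and is not multiplicative; the correct argument is the paper's Observation \ref{complete-good}, which sets $g(u)=\bigcap\{f(w):\max w\leq\max u\}$ and uses $\kappa_\ell$-completeness to see $g(u)\in\ee_\ell$, after which your fiberwise computation for products of good ultrafilters goes through.
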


\begin{proof}
Let $\de_1$ be a $\lambda$-regular, $\lambda^+$-good ultrafilter on $\lambda$. 
Let $\ee$ be the ultrafilter on $\kappa_n$ given by $\ee_0 \times \dots \times \ee_n$, where the $\ee_\ell$ are as in the statement
of Theorem \ref{alt-cuts}. Let $\de = \de_1 \times \ee$. We will show that $\de$ has the desired properties.

\begin{enumerate}
\item[(1)-(2)] This follows from having chosen $M_0$ in Theorem \ref{alt-cuts} to be a $\de_1$-ultrapower, thus $\lambda^+$-saturated
by the $\lambda^+$-goodness (and regularity) of $\de_1$.

\item[(3)] By Observation \ref{complete-good}.

\item[(4)] By induction on $\ell \leq n$, using Claim \ref{reg-transfer}(1) and the completeness of $\ee_\ell$, $\ell \leq n$.

\item[(5)] By induction on $\ell \leq n$, using Claim \ref{reg-transfer}(2). Since $\de_1$ is regular and $\lambda^+$-good, $\lcf(\aleph_0, \de) \geq \lambda^+$. 

\item[(6)] By Claim \ref{random-graph-saturation}.
\end{enumerate}
\end{proof}

\begin{qst}
Can Theorem \ref{alt-regular} be generalized to any number of alternations, not necessarily finite?
\end{qst}

\section*{Appendix} \label{s:normal}

In this appendix, we collect several easy proofs or extensions of proofs relevant to the material in the paper.

\begin{obs} \label{complete-good}
If $\ee$ is a $\kappa$-complete ultrafilter on $\kappa$ then $\ee$ is $\kappa^+$-good.
\end{obs}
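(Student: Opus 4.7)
The plan is to write down an explicit multiplicative refinement. By the remark following Definition \ref{good-filters}, I may assume the given $f : \fss(\kappa) \to \ee$ is monotonic. First I would, for each $\alpha < \kappa$, define
$$B_\alpha \;=\; \bigcap \{\, f(v) : v \in \fss(\kappa),\ v \subseteq \alpha \,\}.$$
Since $\alpha < \kappa$ implies $|\alpha| < \kappa$, this is an intersection of strictly fewer than $\kappa$ members of $\ee$, so $\kappa$-completeness of $\ee$ gives $B_\alpha \in \ee$; clearly $\alpha \le \beta$ implies $B_\alpha \supseteq B_\beta$.

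Next I would define the candidate refinement $f' : \fss(\kappa) \to \ee$ by $f'(\emptyset) := f(\emptyset)$ and, for $u \neq \emptyset$, $f'(u) := B_{\max(u)+1}$. The verification is then immediate. Since $u \subseteq \max(u)+1$, the set $f(u)$ appears in the intersection defining $B_{\max(u)+1}$, giving $f'(u) \subseteq f(u)$. For multiplicativity, take any $u,v \in \fss(\kappa) \setminus \{\emptyset\}$: since $\max(u \cup v)+1 = \max(\max(u)+1,\max(v)+1)$ and the sequence $\langle B_\alpha : \alpha < \kappa \rangle$ is weakly decreasing,
$$f'(u) \cap f'(v) \;=\; B_{\max(u)+1} \cap B_{\max(v)+1} \;=\; B_{\max(u \cup v)+1} \;=\; f'(u \cup v),$$
and the cases involving $\emptyset$ reduce to the identity $f'(\emptyset) \cap f'(w) = f'(w)$, which holds because $f(\emptyset) \supseteq f(w) \supseteq f'(w)$ by monotonicity.

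There is essentially no main obstacle; the only subtlety worth flagging is that the natural indexing of the intersection $B_\alpha$ has cardinality at most $|\alpha| < \kappa$, so the hypothesis of $\kappa$-completeness (rather than merely $\aleph_1$-completeness, which is what one typically uses when refining against finite patterns) is exactly what is needed to keep each $B_\alpha$ in $\ee$.
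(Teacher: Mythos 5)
Your proof is correct and follows essentially the same route as the paper's: the paper defines $g(u) = \bigcap\{f(w) : \max w \le \max u\}$, which is the same object as your $B_{\max(u)+1}$, and verifies the refinement and multiplicativity via the identity $\max(u\cup v) = \max\{\max u, \max v\}$ exactly as you do. Your extra care with the empty-set case and the explicit cardinality count are fine but do not change the argument.
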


\begin{proof} We adapt the proof of \cite{Sh:c} Claim 3.1 p. 334 that any ultrafilter is $\aleph_1$-good.
Suppose we are given a monotonic function $f : \fss(\kappa) \rightarrow \ee$. Define $g(u) = \bigcap \{ f(w) : \max{w} \leq \max{u} \}$. 
For each $u \in \fss(\kappa)$, $\max(u) < \kappa$ and moreover the number of possible finite $w \subseteq \{ \gamma : \gamma < \max(u) \}$
is $< \kappa$. Thus by $\kappa$-completeness, $g(u) \in \ee$. Clearly $g$ refines $f$, and 
since $\max(u \cup v) = \max\{ \max(u), \max(v) \}$, $g$ is multiplicative as desired. 
\end{proof}

Note, however, that while a good regular ultrafilter produces saturated ultrapowers, this need not be the case when the ultrafilter is complete, 
\emph{unless} the index models are also saturated. That is, 
the goodness of the ultrafilter $D$ on $I$ is equivalent to saturation of the ultrapower $M^I/D$ when
(a) we have regularity of the ultrafilter, \emph{or} (b) we assume the model $M$ is saturated.

\begin{fact} \label{fact-good}
Let $D$ be an ultrafilter on $I$ and $\lambda$ a cardinal. 
Then the following are equivalent:
\begin{enumerate}
\item $D$ is $\lambda$-good
\item for any model $M$ in a countable signature which is $\lambda$-saturated, $M^I/D$ is $\lambda$-saturated
\end{enumerate}
and if $D$ is regular, \emph{(1)} is equivalent to:
\begin{enumerate}
\item[(3)] for any model $M$ in a countable signature, $M^I/D$ is $\lambda$-saturated
\end{enumerate}
\end{fact}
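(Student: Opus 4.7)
The plan is to prove $(1) \Leftrightarrow (2)$ unconditionally, and then to derive $(1) \Leftrightarrow (3)$ for regular $D$ by invoking Theorem \ref{backandforth}.

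For $(1) \Rightarrow (2)$, I would take a partial type $p(x) = \{ \varphi_\alpha(x; a_\alpha) : \alpha < \kappa \}$ in $N = M^I/D$ with $\kappa < \lambda$ and form its Łoś distribution $g: \fss(\kappa) \to D$ defined by $g(\sigma) = \{ t \in I : M \models \exists x \bigwedge_{\alpha \in \sigma} \varphi_\alpha(x; a_\alpha[t]) \}$, which is monotonic and $D$-valued by Łoś's theorem. Since $\lambda$-goodness implies $\kappa^+$-goodness for $\kappa < \lambda$, $g$ admits a multiplicative refinement $g'$. For each $t \in I$, the set $u_t = \{ \alpha < \kappa : t \in g'(\{\alpha\}) \}$ satisfies, by multiplicativity, $t \in g'(\sigma) \subseteq g(\sigma)$ for every finite $\sigma \subseteq u_t$, so $\{ \varphi_\alpha(x; a_\alpha[t]) : \alpha \in u_t \}$ is a finitely satisfiable partial type over $M$ of size $\leq \kappa < \lambda$. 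By $\lambda$-saturation of $M$ pick $c_t \in M$ realizing it; Łoś's theorem then shows $c = \langle c_t : t \in I\rangle / D$ realizes $p$ in $N$.

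For $(2) \Rightarrow (1)$, I would argue contrapositively. Starting from a monotonic $f: \fss(\lambda) \to D$ admitting no multiplicative refinement, I would apply Keisler's original coding technique to exhibit a countable-signature theory $T$ — for concreteness, any theory maximum in Keisler's order, such as $Th(\mathbb{Q}, <)$ — and a $\lambda$-saturated $M \models T$ such that $M^I/D$ omits a type of size $\leq \lambda$. The construction selects parameters $\langle a_\alpha : \alpha < \lambda \rangle$ in $N = M^I/D$ so that the natural Łoś distribution of a type $p(x)$ in a versatile formula $\varphi(x; y)$ dominates $f$; any realization of $p$ would then, by reading off the appropriate witnessing sets at each index, pull back to a multiplicative refinement of $f$, contradicting the choice of $f$. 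The main obstacle is arranging the coding so that an arbitrary monotonic $f$ is faithfully reflected as a distribution — this is precisely the content of Keisler's original observation that goodness is detected by formulas, and is the model-theoretic heart of Theorems \ref{formula-corr}--\ref{theory-corr}.

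Finally, under the regularity hypothesis, $(3) \Rightarrow (2)$ is trivial (take $M$ already $\lambda$-saturated), so $(3) \Rightarrow (1)$ reduces to the case just handled. Conversely, $(1) \Rightarrow (3)$ follows by combining $(1) \Rightarrow (2)$ with Theorem \ref{backandforth}: given an arbitrary $M$ in a countable signature, pass to a $\lambda$-saturated elementary extension $M^*$, apply $(2)$ to conclude $(M^*)^I/D$ is $\lambda$-saturated, and invoke the fact that for regular $D$ the saturation of the ultrapower depends only on $Th(M)$, so $M^I/D$ is $\lambda$-saturated as well.
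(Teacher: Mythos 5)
The paper does not actually prove this Fact; it simply cites \cite{Sh:c} VI.2 (Theorems 2.2--2.3, Claim 2.4, Lemma 2.11). Your outline is essentially the standard argument behind that citation, and the direction $(1)\Rightarrow(2)$ is complete and correct as you give it: the \los distribution, the multiplicative refinement $g'$, the sets $u_t$ of size $\leq\kappa<\lambda$, and the appeal to $\lambda$-saturation of $M$ at each index all work exactly as stated.

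Two points need repair. First, in $(2)\Rightarrow(1)$ your concrete instantiation is not justified: the failure of $\lambda$-goodness gives a monotonic $f:\fss(\mu)\rightarrow D$ for some $\mu<\lambda$ (not $\fss(\lambda)$), and, more importantly, you cannot take ``any theory maximum in Keisler's order, such as $Th(\mathbb{Q},<)$'' here. Keisler's order, and Shelah's theorem that the strict order property detects goodness, are statements about \emph{regular} ultrafilters, whereas $(1)\Leftrightarrow(2)$ is asserted for arbitrary $D$; for a $\kappa$-complete non-regular $D$ there is no a priori reason that failure of goodness is witnessed by a dense linear order. The correct move is Keisler's original one: build a purpose-made structure with a versatile formula $\vp(x;y)$ in which the parameters $a_\alpha$ can be chosen index-by-index so that \emph{every} distribution of the resulting type refines $f$ modulo the \los map; that construction works for arbitrary ultrafilters and $\lambda$-saturated index models, and it is the genuinely nontrivial content you are deferring. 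Second, Theorem \ref{backandforth} as stated in the paper concerns only $\lambda^+$-saturation for a regular ultrafilter on $\lambda$, so invoking it for $\lambda$-saturation at an arbitrary cardinal requires its (routine, but unproved here) generalization. A cleaner route to $(1)\Rightarrow(3)$ avoids the transfer entirely: intersect the multiplicative refinement $g'$ with a regularizing family so that each $u_t$ is finite; then $t\in g'(u_t)\subseteq g(u_t)$ already produces a witness $c_t\in M$ with no saturation hypothesis on $M$, which is precisely why regularity lets one drop the saturation of the index model.
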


\begin{proof}
See \cite{Sh:c} VI.2 in particular Theorems 2.2-2.3, Claim 2.4 and Lemma 2.11.
\end{proof}

By re-presenting the proof of \cite{Sh:c} Theorem VI.4.8 p. 379 to emphasize the role of incompleteness, we obtain a more general result.

\begin{theorem} \emph{} \label{lcf-omega}
Let $M$ be a $\lambda^+$-saturated model of an unstable theory $T$, $\vp$ an unstable formula and $E$ a $\kappa$-complete, $\kappa^+$-incomplete
ultrafilter on $\lambda$. 
Let $\delta = \lcf(\kappa, \ee)$. Then $M^I/\ee$ is not $(\kappa + \lcf(\kappa, \ee))^+$-saturated for $\vp$-types.
\end{theorem}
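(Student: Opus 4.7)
The plan is to adapt Shelah's proof of \cite{Sh:c} VI.4.8 from the $\aleph_0$ setting to general $\kappa$, producing a consistent $\vp$-type of size $\leq \kappa + \delta$ in $N = M^I/E$ that is omitted; here $\kappa$-completeness of $E$ takes over the role of countable intersections in the original argument, while $\kappa^+$-incompleteness supplies the needed nonstandard elements. First I would use $\kappa^+$-incompleteness to extract a $\subseteq$-decreasing sequence $\langle X_\alpha : \alpha < \kappa\rangle \subseteq E$ with $\bigcap_\alpha X_\alpha = \emptyset$ (noting $\kappa \leq \lambda$, since no nonprincipal $\kappa$-complete ultrafilter on $\lambda$ could exist otherwise). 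The minimum function $\sigma(t) = \min\{\alpha : t\notin X_\alpha\}$ represents a nonstandard element of $\kappa^I/E$, so $\delta = \lcf(\kappa, E)$ is well-defined; fix a decreasing coinitial sequence $\langle f_\alpha : \alpha < \delta\rangle$ of representatives $f_\alpha : I \to \kappa$ of nonstandard elements, so that $\{t : f_\alpha(t) > \beta\} \in E$ for every $\beta < \kappa$ and $\{t : f_{\alpha_2}(t) < f_{\alpha_1}(t)\} \in E$ for $\alpha_1 < \alpha_2 < \delta$.

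Next, using the order property of $\vp$ and the $\lambda^+$-saturation of $M$ (with $\kappa \leq \lambda$), I would fix a sequence $\langle a_i : i < \kappa\rangle$ in $M$ such that for each $n < \kappa$ the partial type
\[ q_n(x) = \{\neg\vp(x;a_i) : i < n\} \cup \{\vp(x;a_j) : n \leq j < \kappa\} \]
is realized in $M$. Setting $h_\alpha \in M^I$ by $h_\alpha(t) = a_{f_\alpha(t)}$ for each $\alpha < \delta$, the candidate $\vp$-type is
\[ p(x) = \{\neg\vp(x; a_i/E) : i < \kappa\} \cup \{\vp(x; h_\alpha/E) : \alpha < \delta\} \]
over a parameter set of cardinality $\leq \kappa + \delta$. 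Consistency of $p$ is a routine application of \Lost theorem: any finite subtype mentioning $a_{i_1},\dots,a_{i_n}$ and $h_{\alpha_1},\dots,h_{\alpha_m}$ is realized in $M$ at every index in the $E$-large set where $f_{\alpha_k}(t) > \max_j i_j$ for all $k$, using a realization of $q_{\max_j i_j + 1}$.

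The heart of the proof is showing $p$ is omitted. Suppose toward contradiction that some $b/E \in N$ realizes $p$. For each $i < \kappa$, the set $A_i = \{t : M \models \neg\vp(b(t); a_i)\}$ lies in $E$; by $\kappa$-completeness of $E$, $A_* := \bigcap_{i<\kappa} A_i \in E$. For any $\alpha < \delta$, the set $B_\alpha = \{t : M \models \vp(b(t); a_{f_\alpha(t)})\}$ is also in $E$, so $A_* \cap B_\alpha \in E$ is nonempty. But for any $t$ in this intersection, $f_\alpha(t) < \kappa$, so membership in $A_*$ forces $M \models \neg\vp(b(t); a_{f_\alpha(t)})$, contradicting $t \in B_\alpha$. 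The main obstacle is precisely this omission step: the construction of the ``upper'' parameters via $h_\alpha(t) = a_{f_\alpha(t)}$ is designed so that each $h_\alpha(t)$ coincides with some element of the $\kappa$-sized ``lower'' collection at each index, which is what allows $\kappa$-completeness to convert per-index consistency into a pointwise contradiction. This yields an omitted $\vp$-type of size $\leq \kappa + \delta$, as desired.
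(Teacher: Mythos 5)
Your setup --- stretching the order property to a sequence $\langle a_i : i<\kappa\rangle$, fixing a coinitial sequence $\langle f_\alpha : \alpha<\delta\rangle$ in the nonstandard part of $\kappa^I/E$, and forming the type with parameters $h_\alpha(t)=a_{f_\alpha(t)}$ --- is sound, and it is essentially the paper's construction stripped of its expanded-language packaging (the paper codes the array by a predicate $P$ and functions $F_\ell$ and reduces to an $(\aleph_0,\delta)$-cut). Your consistency argument is also fine. But the omission step has a genuine gap: you assert that $A_*=\bigcap_{i<\kappa}A_i\in E$ ``by $\kappa$-completeness.'' $\kappa$-completeness only guarantees that intersections of \emph{fewer than} $\kappa$ members of $E$ stay in $E$; here you intersect exactly $\kappa$ many. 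Worse, the hypothesis that $E$ is $\kappa^+$-incomplete says precisely that some $\kappa$-indexed family from $E$ has intersection outside $E$ (it can even be taken empty), so this is exactly the kind of intersection the hypotheses forbid you from taking for free. A telltale symptom is that your omission argument never uses the coinitiality of $\langle f_\alpha\rangle$: a single $\alpha$ would already yield the contradiction, and $\delta$ would play no role in the bound $(\kappa+\delta)^+$.

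The repair is the device the paper uses (its function $\rho$). Fix a decreasing sequence $\langle X_i : i<\kappa\rangle\subseteq E$ with empty intersection and define $g(t)=\min\{i<\kappa : t\notin A_i\cap X_i\}$; this is everywhere defined, and for each $\beta<\kappa$ the set $\{t : g(t)>\beta\}$ contains $\bigcap_{i\le\beta}(A_i\cap X_i)$, an intersection of fewer than $\kappa$ members of $E$, hence lies in $E$. So $g/E$ lies above the diagonal copy of $\kappa$, and by coinitiality (passing to $\alpha+1$ if needed, since the $f_\alpha$ are strictly decreasing) there is $\alpha<\delta$ with $f_\alpha<g$ modulo $E$. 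On that $E$-large set we get $t\in A_{f_\alpha(t)}$, i.e.\ $M\models\neg\vp(b(t);h_\alpha(t))$, contradicting $B_\alpha\in E$. With this change your direct argument goes through and is, if anything, cleaner than the paper's reduction to a cut in an expanded language.
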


\begin{proof}
First consider the countably incomplete case.
We build a correspondence between a $\vp$-type and a $<$-type in an expanded language. 

Let $\vp = \vp(x;y)$, where
without loss of generality $\ell(x)=1$ but $\ell(y)$ need not be $1$. Choose, for each $m < n <\omega$, sequences
$\overline{a}^n_m$ where $\ell(\overline{a}^n_m) = \ell(y)$, so that $\vp$ has the order property over each
$\langle \overline{a}^n_m : m < n \rangle$: i.e., $k < n < \omega$ implies $\{ \vp(x;\overline{a}^n_m)^{if (m > k)} : m < n \}$ is consistent.

Let $\langle b_n : n < \omega \rangle$ be a sequence of distinct elements. Let $P$ be a new unary relation symbol,
$<$ a new binary relation symbol, and for each $\ell < \ell(y)$ let $F_\ell$ be a new binary function symbol. 
Let $M_1$ denote the expansion of $M$ by these new symbols, as follows.
$P^{M_1} = \{ b_n : n < \omega \}$ and $<^{M_1} = \{ \langle b_k, b_n \rangle  : k < n < \omega \}$. Finally, interpret the
functions $F_\ell$ so that for each $k<n<\omega$, $\overline{a}^n_k = \langle F_0(b_k, b_n), \dots F_{\ell(y)-1}(b_k, b_n) \rangle$.

Now we take the ultrapower $N = M^\lambda/E$ and let $N_1$ denote the corresponding ultrapower in the expanded language. 
In $N_1$, $P$ is a linear order and so by the hypothesis on $E$ we have a $\delta$-cut over the diagonal embedding of the
sequence $\langle b_n : n < \omega \rangle$. Choose a sequence $\langle c_i : i < \delta \rangle$ witnessing this, 
so (1) each $c_i \in P^{N_1}$, (2) $n <\omega$ and $i<j<\delta$ implies $N_1 \models b_n < c_i < c_j$, 
(3) for no $c \in P^{N_1}$ is it the case that for each $n<\omega, i <\delta$, $N_1 \models b_n < c < c_i$. 

We now translate back to a $\vp$-type. Consider:
\[ q(x) = \{ \neg \vp(x,F_0(b_n, c_0), \dots F_{\ell(y)-1}(b_n, c_0)) : n < \omega \} \cup 
\{ \vp(x,F_0(c_i, c_0), \dots F_{\ell(y)-1}(c_i, c_0)) : 0 < i < \delta \} \]
This is a consistent partial $\vp$-type by $\lost$ theorem, since for $n < \omega, i < \delta$ we have that
$b_n < c_i \mod E$. We will show that $q$ is omitted. Suppose it were realized,
say by $a$. Let $\langle X_n : n < \omega \rangle$ be a sequence of elements of $E$ witnessing that $E$ is 
$\aleph_1$-incomplete. Let $\langle Y_n : n < \omega \rangle$ be a sequence of elements of $E$ given by
\[ Y_n = \{ t \in \lambda : M \models \bigwedge \{ \neg \vp(a[t],F_0(b_k, c_0[t]), \dots F_{\ell(y)-1}(b_k, c_0[t])) : k \leq n \} \]
which exists by \lost theorem. (We write $b_k$ rather than $b_k[t]$ since these are essentially constant elements. 
Note that in each index model, $c_0[t]$ is simply one of the $b_n$s.)

For each $t \in I$ define $\rho(t) = \min\{ n : t \notin X_{n+1} \cap Y_{n+1} \}$. By the assumption of $\aleph_1$-incompleteness,
$\rho$ is well defined. Define a new element $b \in {^\lambda M}$ by: $b[t] = b_{\rho(t)}$. By \lost theorem $P^{N_1}(b)$, 
so let us determine its place in the linear order $<^{N_1}$.

First, for each $n<\omega$, we have $b > b_n \mod E, as $E is $\omega$-complete (i.e. a filter) thus:
\[ \{ t : c[t] > b_n[t] \} \supseteq \bigcap_{j \le n} ( X_{j+1} \cap Y_{j+1} ) \in \de \]

On the other hand, suppose that for some $i < \delta$ we had $c_i \leq b \mod E$. Then by \lost theorem and definition of $\rho$,
it would have to be the case that
\[ N_1 \models \neg \vp(a[t],F_0(c_i[t], c_0[t]), \dots F_{\ell(y)-1}(c_i[t], c_0[t])) \]
contradicting the definition of $q$. So for each $i < \delta$, we have that $b < c_i \mod E$.

Thus from a realization $a \models q$ we could construct a realization $b$ of the $(\aleph_0, \delta)$-cut in $P^N$. 
Since the latter is omitted, $q$ must be as well, which completes the proof for countably incomplete filters.

Now for the general case: If $\lambda = \kappa$,
modify the argument of Theorem \ref{lcf-omega} by replacing $\omega$ with $\kappa$ everywhere in the proof just given,  
and \lost theorem by \lost theorem for $L_{\kappa, \kappa}$. 
If $\lambda > \kappa$, begin by choosing a surjective map $\bh : \lambda \rightarrow \kappa$ 
so that $\ee = h(E)$ is a nonprincipal ultrafilter on $\kappa$, thus $\kappa$-complete not $\kappa^+$-complete. 
\end{proof}

For completeness, we verify that $\lambda$-flexible corresponds to $\lambda$-OK, Definition \ref{d:ok} above.

\begin{obs} \label{flexible-ok} Suppose that $\de$ is an $\aleph_1$-incomplete ultrafilter on $I$.
Then the following are equivalent. 
\begin{enumerate}
\item $\de$ is $\lambda$-O.K.
\item $\de$ is $\lambda$-flexible.
\end{enumerate}
\end{obs}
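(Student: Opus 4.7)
The plan is to prove the two implications separately, each by constructing the object required from the object given via the natural dictionary between functions $f : I \to \mathbb{N}$ and descending sequences in $\de$.

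For the direction $\lambda$-flexible $\Rightarrow$ $\lambda$-OK, I would start with a monotone, size-uniform $g : \fss(\lambda) \to \de$, and let $h_n := g(u)$ for any $u$ with $|u| = n$; this is well-defined by size-uniformity, and $\langle h_n : n < \omega \rangle$ is a $\subseteq$-decreasing sequence in $\de$. Using $\aleph_1$-incompleteness, I would fix $\langle Y_n : n < \omega \rangle \subseteq \de$ with $\bigcap_n Y_n = \emptyset$ and replace $h_n$ by $h'_n := h_n \cap Y_0 \cap \cdots \cap Y_n$, so that $h'_n \in \de$, the sequence is still $\subseteq$-decreasing, and $\bigcap_n h'_n = \emptyset$. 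Then $f(t) := \max\{ n : t \in h'_n \}$ is a well-defined function $I \to \mathbb{N}$ which is $\de$-nonstandard, since $\{ t : f(t) \geq n \} \supseteq h'_n \in \de$. Apply $\lambda$-flexibility to $f$ to obtain sets $\langle X_\alpha : \alpha < \lambda \rangle \subseteq \de$ with $|\{\alpha : t \in X_\alpha\}| \leq f(t)$ for every $t \in I$, and define the refinement by $f'(u) := \bigcap_{\alpha \in u} X_\alpha$. This is in $\de$ (finite intersection), manifestly multiplicative, and refines $g$: if $t \in f'(u)$ then $|\{\alpha : t \in X_\alpha\}| \geq |u|$, so $f(t) \geq |u|$, so $t \in h'_{|u|} \subseteq g(u)$.

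For the converse direction, $\lambda$-OK $\Rightarrow$ $\lambda$-flexible, given a $\de$-nonstandard $f : I \to \mathbb{N}$, I would define $g : \fss(\lambda) \to \de$ by $g(u) := \{ t \in I : f(t) \geq |u| \}$. This is size-uniform and monotone, and each $g(u)$ lies in $\de$ precisely because $f$ is $\de$-nonstandard. Apply $\lambda$-OK to produce a multiplicative refinement $g' : \fss(\lambda) \to \de$, and set $X_\alpha := g'(\{\alpha\})$. For any $t \in I$ and any distinct $\alpha_1, \ldots, \alpha_k$ with $t \in X_{\alpha_i}$ for all $i$, multiplicativity gives $t \in g'(\{\alpha_1, \ldots, \alpha_k\}) \subseteq g(\{\alpha_1, \ldots, \alpha_k\})$, so $f(t) \geq k$. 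Since $f(t) \in \mathbb{N}$ is finite, the set $\{\alpha : t \in X_\alpha\}$ is finite and of size at most $f(t)$, which is exactly the flexibility condition.

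The proof is essentially a pair of routine translations, and there is no substantive obstacle; the one subtlety to watch is that $\aleph_1$-incompleteness is genuinely needed in the first direction to guarantee that $f$ is $\mathbb{N}$-valued (absent this assumption, the ``$\max$'' step could fail on a $\de$-large set). In the reverse direction one uses instead that the target of $f$ is $\mathbb{N}$ to rule out the pathological case where some $t$ lies in infinitely many $X_\alpha$.
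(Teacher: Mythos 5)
Your proof is correct and follows essentially the same route as the paper's: both directions translate between $\de$-nonstandard integers and size-uniform monotone functions via $u \mapsto \{t : f(t) \geq |u|\}$, using the $\aleph_1$-incompleteness witness to make the relevant $\max$ finite in the flexible-to-OK direction. The only cosmetic difference is that the paper also invokes incompleteness in the OK-to-flexible direction (to truncate a nonstandard integer taking values in an elementary extension of $(\mathbb{N},<)$), which you correctly observe is unnecessary when $f$ is literally $\mathbb{N}$-valued as in the paper's Definition of flexibility.
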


\begin{proof}
(1) $\rightarrow$ (2) Let $M$ be given with $(\mathbb{N}, <) \preceq M$ and 
let $h_0 \in {^I M}$ be any $\de$-nonstandard integer.
Let $\{ Z_n : n < \omega \} \subseteq \de$ witness that $\de$ is $\aleph_1$-incomplete.
Without loss of generality, $n < n^\prime \implies Z_n \supseteq Z_{n^\prime}$.
Let $h_1 \in { ^I \mathbb{N} }$ be given by $h_1(t) = \max \{ n : t \in Z_n \}$.
Define $h \in {^I \mathbb{N}}$ by 
\[ h(t) = \min \{ h_0(t), h_1(t) \} \]
Then for each $n\in \mathbb{N}$, $X_n := \{ t : h(t) \geq n \} \in \de$ and $X_n \subseteq Z_n$,
thus $\bigcap \{ X_n : n \in \mathbb{N} \} = \emptyset$.

Define a function $f: \fss(\lambda) \rightarrow \de$ by $f(u) = X_{|u|}$. 
As $\de$ is $\lambda$-OK, we may choose $g$ to be a multiplicative refinement of $f$,
and consider $\mathbf{Y} = \{ Y_i : i < \lambda \}$ given by $Y_i = g(\{ i \})$.

First, we verify that $\mathbf{Y}$ is a regularizing family, by showing that each $t \in I$ can only belong
to finitely many elements of $\mathbf{Y}$. Given $t \in I$, 
let $m = h(t) + 1 < \omega$, so $t \notin X_m$. Suppose there were $i_1 < \dots < i_m < \lambda$ such that
$t \in g(\{ i_1 \}) \cap \dots \cap g(\{ i_m \})$. As $g$ is multiplicative and refines $f$, this would imply
$t \in g(\{ i_1, \dots i_m \}) \subseteq f(\{ i_1, \dots i_m \}) = X_m$, a contradiction. Thus $\mathbf{Y}$ is a regularizing family.
Moreover, as $t$ was arbitrary, we have shown that 
\[ \left| \{ i < \lambda : t \in g(\{ i \}) \} \right| \leq h(t) \leq h_0(t) \]
and thus that $\mathbf{Y}$ is a regularizing family below $h_0$.
As $h_0$ was an arbitrary nonstandard integer, this completes the proof.

(2) $\rightarrow$ (1) Let $f: \fss(\lambda) \rightarrow \de$ be such that $|u| = |v| \implies f(u) = f(v)$, 
and we will construct a multiplicative refinement for $f$. 
Let $\langle Z_n : n < \omega \rangle$ witness the $\aleph_1$-incompleteness of $\de$, and as before,
we may assume $n < n^\prime \implies Z_n \supseteq Z_{n^\prime}$. 
For each $t \in I$, let $\rho \in {^I\mathbb{N}}$ be given by 
$\rho(t) = \max \{ n \in \mathbb{N} : t \in f(n) \cap Z_n \}$, which is well defined by the choice of the $Z_n$.
Now for each $m \in \mathbb{N}$
\[ \{ t \in I : \rho(t) > m \} \supseteq \bigcap \{  f(n) \cap Z_n : n \leq m \} \in \de \]
so $\rho$ is $\de$-nonstandard. Applying the hypothesis of flexibility, let $\{ Y_i : i < \lambda \}$ be a $\lambda$-regularizing family
below $\rho$. Let $g: \fss(\lambda) \rightarrow \de$ be given by $g(\{ i \}) = f(\{i\}) \cap Y_i$ and for $|u| > 1$,
$g(u) = \bigcap \{ g(\{i\}) : i \in u \}$. Thus $g$ is multiplicative, by construction. Let us show that it refines $f$.
Given any $n<\omega$ and $i_1 < \dots < i_n <\lambda$, observe that by definition of ``below $\rho$'' we have
$t \in Y_{i_1} \cap \dots \cap Y_{i_n} \implies \rho(t) \geq n$. Applying this fact and the definitions of $g$ and $f$,
\[ g( \{ i_1, \dots i_n \} ) \subseteq \bigcap \{ Y_{i_j} : 1 \leq j \leq n \} \subseteq \{ t \in I : \rho(t) \geq n \} \subseteq f(n) = 
f(\{ i_1, \dots i_n \}) \]
thus $g$ refines $f$, which completes the proof.
\end{proof}

\newpage

\end{document}